\newtheorem{theorem}{Theorem}
\newtheorem{proposition}{Proposition}
\newtheorem{lemma}{Lemma}
\newtheorem{corollary}{Corollary}[theorem]
\theoremstyle{definition}
\newtheorem{definition}{Definition}[section]
\newtheorem{assumption}{Assumption}
\theoremstyle{remark}
\newtheorem*{remark}{Remark}
\DeclareSymbolFont{sansops}{OT1}{\sfdefault}{m}{n}
\renewcommand\operator@font{\mathgroup\symsansops}
\DeclareSymbolFont{sfoperators}{OT1}{cmss}{m}{n}
\DeclareSymbolFontAlphabet{\mathsf}{sfoperators}
\def\operator@font{\mathgroup\symsfoperators}
\newcommand{\R}{\mathbb{R}}
\newcommand{\N}{\mathbb{N}}
\newcommand{\E}{\operatorname{ E}}%{\operatorname{E}}
\newcommand{\vct}[1]{\boldsymbol{#1}}
\newcommand{\mtx}[1]{\boldsymbol{#1}}
\newcommand{\diag}{\operatorname{diag}}
\newcommand{\<}{\langle}
\renewcommand{\>}{\rangle}
\newcommand{\T}{\top}
\newcommand{\Null}{\operatorname{ ker}}%null}}
\newcommand{\Range}{\operatorname{ range}}
\newcommand{\Span}{\operatorname{ span}}
\newcommand{\trace}{\operatorname{ tr}}
\newcommand{\ip}[2]{\left\<#1, #2\right\>}
\newcommand{\norm}[1]{\left|\left|#1\right|\right|}
\newcommand{\opnorm}[1]{\norm{#1}_{ \operatorname{op}}}
\newcommand{\Expec}[2][]{\E_{#1}\left[#2\right]}
\newcommand{\p}[1]{\left(#1\right)}
\newcommand{\s}[1]{\left[#1\right]}
\renewcommand{\c}[1]{\left\{#1\right\}}
\newcommand{\abs}[1]{\left|#1\right|}
\newcommand{\Ind}[1]{\mathds{1}\left\{#1\right\}}
\newcommand{\set}[1]{\mathcal{#1}}
\DeclareMathOperator*{\argmin}{\operatorname{arg~min}}
\newcommand{\va}{\vct{a}}
\newcommand{\vd}{\vct{d}}
\newcommand{\ve}{\vct{e}}
\newcommand{\vf}{\vct{f}}
\newcommand{\vh}{\vct{h}}
\newcommand{\vp}{\vct{p}}
\newcommand{\vs}{\vct{s}}
\newcommand{\vu}{\vct{u}}
\newcommand{\vv}{\vct{v}}
\newcommand{\vw}{\vct{w}}
\newcommand{\vx}{\vct{x}}
\newcommand{\vy}{\vct{y}}
\newcommand{\vz}{\vct{z}}
\newcommand{\vbeta}{\vct{\beta}}
\newcommand{\vzeta}{\vct{\zeta}}
\newcommand{\vtheta}{\vct{\theta}}
\newcommand{\vrho}{\vct{\rho}}
\newcommand{\vzero}{\vct{0}}
\newcommand{\vone}{\vct{1}}
\newcommand{\mA}{\mtx{A}}
\newcommand{\mB}{\mtx{B}}
\newcommand{\mC}{\mtx{C}}
\newcommand{\mE}{\mtx{E}}
\newcommand{\mH}{\mtx{H}}
\newcommand{\mL}{\mtx{L}}
\newcommand{\mM}{\mtx{M}}
\newcommand{\mP}{\mtx{P}}
\newcommand{\mR}{\mtx{R}}
\newcommand{\mS}{\mtx{S}}
\newcommand{\mW}{\mtx{W}}
\newcommand{\mX}{\mtx{X}}
\newcommand{\mY}{\mtx{Y}}
\newcommand{\mGamma}{\mtx{\Gamma}}
\newcommand{\mPi}{\mtx{\Pi}}
\newcommand{\mId}{\mathbf{I}}
\newcommand{\vsbar}{\overline{\vs}}
\newcommand{\setB}{\set{B}}
\newcommand{\setD}{\set{D}}
\newcommand{\setP}{\set{P}}
\newcommand{\setS}{\set{S}}
\newcommand{\st}{\operatorname{\sf s.t. }}
\newcommand{\ow}{\operatorname{\sf otherwise }}
\newcommand{\sgn}{\operatorname{sign}}
\renewcommand{\S}{\mathbb{S}}
\renewcommand{\phi}{\varphi}
\renewcommand{\rho}{\varrho}
\renewcommand{\epsilon}{\varepsilon}
\renewcommand{\vv}{\vct{v}}
\renewcommand{\c}[1]{\left\{#1\right\}}
\newcommand{\vstilde}{\vct{\tilde s}}
\newcommand{\vell}{\vct{\ell}}
\newcommand{\vxtilde}{\vct{\tilde x}}
\newif\ifapdxthms
\DeclarePairedDelimiter{\ceil}{\lceil}{\rceil}
\begin{document}

\title{Efficient Network Reconfiguration by Randomized Switching}

\author{Samuel Talkington\thanks{School of Electrical and Computer Engineering, Georgia Tech, Atlanta, GA, USA.\\ Web: \href{https://samueltalkington.com}{https://samueltalkington.com}, 
\href{https://molzahn.github.io}{https://molzahn.github.io}, 
Email: \{\href{mailto:talkington@gatech.edu}{talkington}, 
\href{mailto:molzahn@gatech.edu}{molzahn}\}@gatech.edu
}, \qquad Dmitrii M. Ostrovskii\thanks{Schools of Mathematics and Industrial and Systems Engineering, Georgia Tech, Atlanta, GA, USA.\\
Web: \href{https://ostrodmit.github.io/}{https://ostrodmit.github.io/}, Email: \href{mailto:ostrov@gatech.edu}{ostrov}@gatech.edu}, \qquad Daniel K. Molzahn$^*$
}
\date{\today}
\maketitle

\begin{abstract}
  We present an algorithm that efficiently computes nearly-optimal solutions to a class of combinatorial reconfiguration problems on weighted, undirected graphs. Inspired by societally relevant applications in networked infrastructure systems, these problems consist of simultaneously finding an unreweighted sparsified graph and nodal potentials that satisfy fixed demands, where the objective is to minimize some congestion criterion, e.g., a Laplacian quadratic form. These are mixed-integer nonlinear programming problems that are NP-hard, in general. To circumvent these challenges, instead of solving for a single best configuration, the proposed randomized switching algorithm seeks to design a distribution of configurations that, when sampled, ensures that congestion concentrates around its optimum. We show that the proposed congestion metric is a generalized self-concordant function in the space of switching probabilities, which enables the use of efficient and simple conditional gradient methods.  We implement our algorithm and show that it outperforms a state-of-the-art commercial mixed-integer second-order cone programming (MISOCP) solver by orders of magnitude over a large range of problem sizes.
  %For a network with~$m$ candidate edges of which we add~$q$, we show that this approach can generate an~$\alpha$-approximately optimal solution (configuration and voltages) in~$\tilde{O}(mq/\alpha)$ time. %The proposed congestion criterion is a quadratic functional of the nodal potentials and the switching variables, and can be defined for any topology-preserving transformation of the input graph. This metric is a natural model for congestion or total risk in relevant applications. 
\end{abstract}

\newpage

\section{Introduction}
\label{sec:intro}
Consider an undirected weighted graph~$G = (N,E,\vw)$ with $n = \abs{N}$ nodes and~$m=\abs{E}$ \emph{switchable edges} which can be opened or closed. The edge weights $\vw \in \R^m_+$ are fixed. We are given a fixed vector of nodal demands $\vd \in \R^n$ that satisfies $\vd^\T \vone = 0$. A \emph{network reconfiguration problem} is to find edge \emph{switching statuses} $\vs \in S \subseteq \c{0,1}^m$ \emph{and} nodal potentials $\vx \in \R^n$ that minimize a \emph{congestion criterion}, subject to various engineering constraints. For example, such constraints may include ensuring that the graph is connected and that the nodal potentials satisfy network physics for the given demands over the switched graph. A network reconfiguration problem can be formulated as the following parametric mixed-integer nonlinear program (MINLP):
    \begin{align*}
        \setP(\vd) :\qquad \min_{\vs,\vx}& \qquad  \vx^\T \mGamma_{\vs} \vx \tag{Congestion}\label{eq:prob:energy}\\
        % \st& \qquad \mathsf{graph\ connected},\\
        \st& \qquad \mL_{\vs}\vx = \vd \tag{Physics}\label{eq:prob:phys}\\
        &\qquad \lambda_2(\mL_{\vs}) \geq \lambda_\star \tag{Connectivity}\label{eq:prob:conn}\\ 
        & \qquad \vs \in S \tag{Switching}\label{eq:prob:switching}\\
        & \qquad \vx\perp\vone \tag{Kirchhoff},
    \end{align*}   
    where $\mL_{\vs} := \sum_{i<j} \mE_{ij}w_{ij}s_{ij}$ is the \emph{model Laplacian} of the switched graph $G_{\vs} := (N,E,\vw \circ \vs)$, and $\mGamma_{\vs} := \sum_{i<j} \mE_{ij} \gamma_{ij} s_{ij}$ is a \emph{target Laplacian} for a graph with the same connectivity, but potentially different edge weights. The matrix~$\mE_{ij} := \p{\ve_i - \ve_j}\p{\ve_i - \ve_j}^\T$ is an elementary Laplacian describing a two-node graph. The constraint on the second-smallest eigenvalue~$\lambda_2(\mL_{\vs}) \geq \lambda_\star$  ensures the graph remains connected.
    
    There are several particularly relevant cases for the objective \eqref{eq:prob:energy} and the target Laplacian $\mGamma_{\vs}$, with natural connections to engineering applications, such as in electric power systems:
    \begin{enumerate}
        
        \item the original \emph{model Laplacian}, i.e., $\gamma_{ij} = w_{ij}$ for all~$(i,j) \in E$. In this case, if $w_{ij}$ represents the \textit{conductance} of edge~$(i,j) \in E$, the objective becomes the \emph{total electric power dissipated} across all nodes in the network~$\sum_{i=1}^n d_{i} x_{i} := \sum_{i=1}^n p_i$.
        % \item    the \emph{normalized Laplacian}, i.e., $y_{ij} = \sqrt{w_{ij}}$, for all $ij \sim 1,\ldots,m$. In this case, the objective function becomes the \emph{random congestion} objective function. 
        \item  a \emph{capacitated Laplacian}, e.g.~$\gamma_{ij} = w_{ij}^2/c_{ij}^2$, for all~$(i,j) \in E$. In this case, the objective function becomes the sum of the normalized line flows $\sum_{i<j} f_{ij}^2/c_{ij}^2$.
        \item a  \emph{risk}, \emph{fairness}, or \emph{robustness} functional, which measures the importance of an edge with respect to some metric. For instance,~$\gamma_{ij}$ could represent the probability that a wildfire ignites at a line in an electrical network, or the criticality of the edge in socioeconomic factors. 
    \end{enumerate}
    These cases can be derived in a straightforward manner from the general framework of electrical networks and their corresponding graph representations.

\subsection{Motivation}
\label{sec:intro:motivation}
The program~$\setP$ is inspired by more complex network reconfiguration problems that are of great current interest to electric power system researchers; see \cite{Babaeinejadsarookolaee_congestion_2023,haider_siphyR_2025,vandersar2025optimizingpowergridtopologies} for recent examples. Such problems are known as \emph{transmission switching}, \emph{topology control}, or \emph{network reconfiguration} in the relevant literature. The constraint \eqref{eq:prob:phys} is identical to \emph{DC power flow}, which is a linear approximation of the nonlinear AC power flow equations. Even our primitive problem~$\setP$ is NP-hard in general (see \cite{lehmann2014complexitydcswitchingproblems,Kooij_2023} for a related discussion), and the objective function is nonconvex.

A natural first attempt to solve the problem is to relax the switching variables $\vs$ to be continuous, i.e., $\vs \in [0,1]^m$, and then solve the resulting optimization problem. However, this approach does not yield a feasible solution to the original problem, since the relaxed solution need not be integral. Moreover, the relaxed problem is still a challenging problem, as~\eqref{eq:prob:phys} involves bilinear equality constraints.

\paragraph{Applications of interest.}
The present paper proposes a new primitive blending randomized network design and mixed-integer quadratic optimization. The proposed model has direct applications to widely used approximate models for electric power flow and water flow in real-world infrastructure networks, both of which obey the conditions of the proposed theory. Moreover, this primitive is directly inspired by the electric power transmission network reconfiguration problem, where transmission lines are switched on and off to improve the stability of the network. The algorithm is designed to be usable for the \textit{DC power flow approximation} of the nonlinear AC power flow equations, a canonical linearization that appears frequently in practice; see~\cite{stott_dc_revisited_2009} for a salient review. This approximation has the attractive problem of being \textit{identical} to the Poisson problem. In essence, we are given a \textit{net power injection vector}~$\vp \in \R^n$ such that~$\vp^\T \vone = 0$ and we want to find a \textit{voltage phase angle} vector~$\vtheta \in (-\pi,\pi]^n$ such that
\[
\vp = \mL \vtheta, \qquad \mL := - \mB, \qquad \vtheta^\T \vone =  0,
\]
where~$\mB \preceq 0$ is a weighted graph Laplacian matrix corresponding to an undirected graph with negative edge weights~$b : N \times N \to \R_-$, known as \textit{susceptances}.

While the proposed theory is a general graph algorithm, it is also applicable to electric power engineering, which has thus far taken largely empirical and deterministic mixed-integer-based approaches. By targeting approximate preservation of Poisson solutions with controllable energy reduction through randomized edge selection, we provide a new, truly interdisciplinary contribution to this problem.

\subsection{Related work}
Our work contributes to a rich literature in spectral graph theory; in particular, the \emph{effective resistance} of graphs, which has been studied extensively in recent decades. It has shown promise for improving the resilience of complex networks \cite{wang_improving_2014} and improving connectivity \cite{ghosh_growing_2006}. A particularly meaningful metric is the \emph{total effective resistance} of a graph, which is defined as the sum of the effective resistances of all edges in the graph. The total effective resistance is a convex function of the edge weights \cite{ghosh_minimizing_2008}, and it has found numerous applications.

    \paragraph{Graph algorithms.} Researchers have made numerous recent breakthroughs in fast algorithms for problems involving massive graph structures; see \cite{teng_laplacian_2010} for a central historical review. Some particularly influential recent results are in solving Laplacian systems \cite{kyng_approx_gaussian_2016}, maximum flow, and minimum cost flow \cite{chen_almostlinear_maxflow_2022}. These algorithms are unified in spirit by incorporating a degree of randomness. Similar to parallel independent work in \cite{zhou_efficient_2025,brown_fast_2024} and previous related work in \cite{ghosh_growing_2006,li_maximizing_trees_2020}, we consider the setting of a graph with $m$ potential edges, where we wish to add $q \ll m$ such edges to design a favorable connected graph. This is related to the problem of maximization of algebraic connectivity, that is, maximization of the second-smallest eigenvalue of the Laplacian matrix \cite{brown_fast_2024,ghosh_growing_2006}, and also equivalent to maximization of the smallest eigenvalue of a grounded Laplacian matrix \cite{zhou_maximizing_2025}. This a classic question in spectral graph theory has been explored from various angles. The work of \cite{li_maximizing_trees_2020} viewed this through the lens of maximizing the number of spanning trees, and \cite{ghosh_growing_2006} through the lens of optimizing over edge weights lying on the simplex, and in \cite{zhou_maximizing_2025} through the lens of adding and deleting rows of the Laplacian. This was addressed through the lens of edge addition in \cite{ru_maximizing_2025}. The problem of maximizing the algebraic connectivity $\lambda_2$ can be viewed as seeking to improve global connectivity and robustness properties, which has found numerous applications in engineering \cite{LOpt_ECC2015,LOpt_TCNS2024,somisetty_spectral_robots_2024}. Our work complements this rich literature by focusing on the context of \emph{fixed} demands and edge weights; this problem setting introduces distinct challenges and opportunities.

    \paragraph{Spectral graph theory.} Effective resistances are one of the many useful results in the field of spectral graph theory. Multiple results related to this concept have inspired the present paper. In particular, the seminal result of~\cite{spielman_graph_2011}, which produced a randomized procedure for spectral sparsification\textemdash first defined in~\cite{spielman_teng_2004}\textemdash has heavily influenced our work. The extension of the spectral sparsification framework to the case where the graph edges cannot be augmented by the algorithm, i.e., unweighted sparsifiers like~\cite{avrom_subset_2013,anderson2014efficientalgorithmunweightedspectral}, are highly related to our work. Moreover, our notion of approximate optimality to the optimization problem under study is equivalent to that of spectral approximation, as in~\cite{spielman_spectral_2011,spielman_graph_2011}, and in other work in covariance estimation~\cite{pmlr-v23-hsu12,pmlr-v99-ostrovskii19a}.

    Spielman and Teng explored algorithms for fast solutions to Laplacian systems of equations~\cite{spielman_nearly-linear_2012}. More recent advances, including works by Kyng and collaborators, emphasize randomized algorithms for graph-based linear solvers and approximations \cite{kyng2016approximategaussianeliminationlaplacians}, which seems to yield advantages in theoretical simplicity and practical speed \cite{gao_robust_2023}.

    Deriving concentration inequalities for quantities in algebraic and spectral graph theory is one of the most promising applications of random matrix theory \cite{chen_spectral_2021}. Oliveira's investigations of concentration inequalities for Laplacians \cite{oliveira_concentration_2010} was an early result in this direction, which gave rise to the matrix Freedman inequality \cite{tropp2011freedmansinequalitymatrixmartingales}. Similarly, interesting results have been observed for the concentration of the total effective resistance \cite{boumal_concentration_2014}, also known as the Kirchhoff index.    A key component of our analysis relates to the literature on concentration inequalities for the minimum eigenvalues of positive-semidefinite matrices, e.g., see \cite{Tropp_2011,oliveira2013lowertailrandomquadratic}. Previously, Brown, Laddha, and Singh developed a framework to maximize the minimum eigenvalues of the sums of rank-one matrices \cite{brown_fast_2024}, which also relied on matrix concentration.

    \paragraph{Randomized algorithms.}
    Our work relates broadly to randomized algorithms, an in particular to their intersection with spectral graph theory. The two fields have deep connections. Foundational graph sparsification techniques, such as those developed by Spielman and Srivastava~\cite{spielman_graph_2011}, seek to approximate a Laplacian operator by a sparse subgraph while preserving spectral properties globally. Graph sparsification has found many applications such as regularization in machine learning~\cite{pmlr-v51-sadhanala16}, quantum computing~\cite{moondra2024promisegraphsparsificationdecomposition}, and others. Numerous other approaches to this problem, such as approximate matrix multiplication~\cite{charalambides_matmul_sparselap_2023} have been proposed. In addition, the authors of~\cite{lau_spectral_2020} gave an iterative randomized rounding algorithm approach to spectral network design. This is related to the problem of randomized experimental design~\cite{allen-zhu_near-optimal_2021}. We add to this literature by focusing on randomized network reconfiguration task aimed at controlling a Laplacian quadratic form under a specific solution profile induced by a fixed demand vector. This reveals previously unknown structural properties about the objective, and efficient algorithms for optimizing it.

    \paragraph{Algorithmic tools.} %Frank-Wolfe, spectral sparsification, Krylov subspaces}
    One of our analyses is based on a Frank-Wolfe type procedure; see \cite{pokutta_frank-wolfe_2024} and \cite{braun_conditional_2025} for recent surveys. This class of first-order iterative convex optimization algorithms are particularly useful in the setting where projection steps are inefficient. Frank-Wolfe methods have found numerous applications, ranging from traffic assignment problems in transportation networks \cite{FUKUSHIMA1984169}, to recent SDP solvers \cite{yurtsever_scalable_2021,pham_scalable_fw_2023}.  %\satnote{More references needed here.}

    % Krylov subspace methods provide an approach to computing extreme eigenvalues. The trust-region Krylov method in \cite[Appendix E; Proposition 5.2]{ostrovskii2021nonconvexnonconcaveminmaxoptimizationsmall} provides an efficient algorithm for such computations, with performance guarantees. The randomized approach in \cite{tropp_randomized_2021} provides an intuitive analysis of error bounds for this problem. 

\section{Preliminaries}
\label{sec:prelim}
The following parameterized positive-semidefinite (PSD) Laplacian matrix functional is the central object of study.
\begin{definition}[Switched Laplacian]
    \label{def:sw_lap}
    For a multigraph $G = (N,E,\vw)$, let $\mA \in \c{-1,0,1}^{m \times n}$ be the node-to-edge incidence matrix for $G$. Let $\vs \in \c{0,1}^m$ be \emph{switching variables} corresponding to a reconfiguration, and let $\mL_{(\cdot)} : \c{0,1}^m \to \S^{n \times n}$ be a \textit{switched Laplacian matrix} functional, where
    \begin{equation}
    \label{eq:switched_laplacian}
    \mL_{\vs} := \mA^\T \mW^{1/2}\mS\mW^{1/2}\mA,
    \end{equation}
    corresponds to the switched graph~$G_{\vs} := (N,E,\vw \circ \vs)$,
    with~$\mS := \diag(\vs)$.
\end{definition}
Def. \ref{def:sw_lap} is also known as an \textit{unweighted sparsifier} \cite{anderson2014efficientalgorithmunweightedspectral}. From here, we can naturally define corresponding effective resistance and leverage score functionals.
 \begin{definition}[Effective resistance and leverage]
    \label{def:eff_res_and_leverage}
        The effective resistance $\rho_{ij} : \c{0,1}^m \to \R_+$ of a multi-edge $ij \in E$ under switching strategy $\vs$ is defined as
        \[
        \rho_{ij}(\vs) := \ve_{ij}^\T \mL_{\vs}^\dagger \ve_{ij}.
        \]
        Moreover, let $w_{ij} \in \R_+$ be the weight of the edge $ij$. The \emph{leverage} of the edge $ij$ is defined as the effective resistance scaled by the weight:
        \[
        \ell_{ij}(\vs) := w_{ij} \cdot \rho_{ij}(\vs) \leq 1.
        \]
        Note that one has $\sum_{ij \in E} \ell_{ij}(\vs) = n-1$. This is also known as Foster's theorem \cite{foster1949average}.
    \end{definition}

    \subsection{Relaxed problem formulation}
    \label{sec:prelim:simplified}
        Throughout, we work under the following simplifying assumptions. Assumption \ref{assum:simple-targets} is without loss of generality, while Assumption \ref{assum:backbone} is a somewhat restrictive assumption that provides a clean way to handle connectivity constraints.
        \begin{assumption}
        \label{assum:simple-targets}
        The target edge weights are the same as the model weights, $y_e = w_e$ for all $e \in E$; equivalently, $\mGamma_{\vs} = \mL_{\vs}$ for any $\vs \in \c{0,1}^m$.
        \end{assumption}
        \begin{assumption}
            \label{assum:backbone}
            There exists a connected template graph $S_0 \subseteq E$ such that $s_e = 1$ always for all $e \in S_0$.
        \end{assumption}

        \paragraph{Relaxation to switching probabilities.} The key step is to relax the binary variables to lie in the unit cube $\vs \in \s{0,1}^m$. This choice has two benefits. From an optimization perspective, the relaxed reconfiguration program becomes much more tractable. Moreover, fractional solutions of the relaxed reconfiguration problem can be viewed as \textit{switching probabilities}, through the framework of randomized rounding~\cite{raghavan_randomized_1987}.
        Under Assumptions \ref{assum:simple-targets} and \ref{assum:backbone}, by primal feasibility, the reconfiguration problem can be written as
        \begin{align*}
            \min_{\vs} \qquad &\vd^\T \mL_{\vs}^\dagger \vd \;=: \phi(\vs)\\
            \st \qquad &s_e = 1 \quad \forall e \in S_0,\\
            \qquad &\norm{\vs}_1 \leq q.
        \end{align*}

        The relaxed congestion objective $\phi : \s{0,1}^m \to \R_+$ can be written as %\satnote{Here we go again, is it $\varphi(s)$ or $\varphi(s,d)$? please be consistent...}
        \begin{equation}
        \label{eq:relaxed_cong}
        \phi(\vs) = \vd^\T \mL_{\vs} \vd, \qquad \mL_{\vs} = \mA^\T \diag(\vw \circ \vs) \mA,
        \end{equation}
        and it has notable structural properties, which we now investigate.

\subsection{First-order information}
\label{sec:first-second}

The gradient of the relaxed congestion~\eqref{eq:relaxed_cong} can be computed in closed form. To obtain this, we first show how to differentiate a related function, the \textit{total effective resistance} or \textit{Kirchhoff index}, which was studied by Ghosh and Boyd in~\cite{ghosh_minimizing_2008}. 
\begin{lemma}
    \label{lem:ghosh_gradient}
    % \label{lem:switching_gradient}
    %Let $\vs \in \R^m_+$ be switching variables,~$\mL_{\vs} := \mA^\T \mW^{1/2}\mS\mW^{1/2}\mA$ the switched Laplacian, and~
    Let~$R(\vs) = \sum_{i<j} \varrho_{ij}(\vs) = \frac{1}{2} \vone^\T \mR_{\vs} \vone$ be the total effective resistance of the graph~$G_{\vs} = (N,E,\vw \circ \vs)$. Then, 
    \[
    \frac{\partial}{\partial s_{ij}} R(\vs) = -n w_{ij} \ve_{ij}^\T\mL^{2\dagger}_{\vs} \ve_{ij}, \qquad ij \in E.
    \]
\end{lemma}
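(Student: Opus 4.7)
The plan is to reduce the statement to the classical identity $R(\vs) = n\,\trace(\mL_{\vs}^{\dagger})$ and then differentiate via the pseudoinverse derivative formula. Assumption~\ref{assum:backbone} guarantees that the graph~$G_{\vs}$ is connected on an open neighborhood of the point where we differentiate, so $\mL_{\vs}$ has constant rank $n-1$ and constant null space $\Span(\vone)$.

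First I would establish the trace identity. Expanding~$\rho_{ij}(\vs) = (\ve_i-\ve_j)^{\T}\mL_{\vs}^{\dagger}(\ve_i-\ve_j)$ and summing gives
\[
R(\vs) \;=\; \sum_{i<j}\bigl[\mL_{\vs}^{\dagger}[i,i]+\mL_{\vs}^{\dagger}[j,j]-2\mL_{\vs}^{\dagger}[i,j]\bigr]
\;=\; n\,\trace(\mL_{\vs}^{\dagger})-\vone^{\T}\mL_{\vs}^{\dagger}\vone,
\]
and since $\mL_{\vs}\vone=\vzero$ implies $\mL_{\vs}^{\dagger}\vone=\vzero$, the last term vanishes, so $R(\vs)=n\,\trace(\mL_{\vs}^{\dagger})$.

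Next I would differentiate. From the representation $\mL_{\vs}=\sum_{e=ij}s_e w_e \mE_{ij}$ one has $\partial \mL_{\vs}/\partial s_{ij}=w_{ij}\mE_{ij}$. Because $\mL_{\vs}$ has constant null space on the domain of interest and $\mE_{ij}\vone=\vzero$, the standard pseudoinverse differentiation rule for matrices of constant rank with compatible null spaces applies, giving
\[
\frac{\partial\mL_{\vs}^{\dagger}}{\partial s_{ij}}\;=\;-\mL_{\vs}^{\dagger}\bigl(w_{ij}\mE_{ij}\bigr)\mL_{\vs}^{\dagger}.
\]
Taking the trace, using linearity and the cyclic property, and using $\mE_{ij}=\ve_{ij}\ve_{ij}^{\T}$, I obtain
\[
\frac{\partial R(\vs)}{\partial s_{ij}}\;=\;-n\,w_{ij}\,\trace\!\bigl(\mL_{\vs}^{\dagger}\ve_{ij}\ve_{ij}^{\T}\mL_{\vs}^{\dagger}\bigr)
\;=\;-n\,w_{ij}\,\ve_{ij}^{\T}\mL_{\vs}^{2\dagger}\ve_{ij},
\]
which is the claimed formula.

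The only delicate step is justifying the pseudoinverse derivative identity, since in general $d(\mL^{\dagger})/dt$ involves extra projection terms onto the null space. Here these additional terms vanish because both $\mL_{\vs}$ and its directional derivative $w_{ij}\mE_{ij}$ annihilate $\vone$ (equivalently, their ranges lie in $\vone^{\perp}$), which reduces the general formula to the familiar $-\mL^{\dagger}(\partial \mL)\mL^{\dagger}$. Everything else is a one-line trace computation.
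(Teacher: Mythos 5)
Your proof is correct, and it takes a genuinely more self-contained route than the paper's. The paper's proof simply cites Ghosh \& Boyd's formula $\partial R/\partial w_{ij} = -n\,\ve_{ij}^\T\mL^{2\dagger}\ve_{ij}$ (which holds at $\vs=\vone$) and then leaves implicit the chain-rule substitution $w_{ij}\mapsto w_{ij}s_{ij}$ that produces the extra factor of $w_{ij}$; it follows with an observation about degree-$(-1)$ homogeneity rather than spelling out the differentiation. You instead derive the identity from scratch: you establish $R(\vs)=n\trace(\mL_{\vs}^\dagger)$ by a direct index calculation (correct, since each diagonal entry of $\mL_{\vs}^\dagger$ appears $n-1$ times across the pairs, and $\sum_{i\neq j}\mL_{\vs}^\dagger[i,j]=\vone^\T\mL_{\vs}^\dagger\vone-\trace(\mL_{\vs}^\dagger)$ with the first term vanishing), then differentiate via the pseudoinverse rule. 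Your remark on why the extra null-space projection terms in the general $d(\mL^\dagger)$ formula drop out is exactly the right point and is handled correctly: both $\mL_{\vs}$ and $w_{ij}\mE_{ij}$ annihilate $\vone$, and the orthogonal complement of $\vone$ is fixed, so the constant-rank pseudoinverse derivative reduces to $-\mL_{\vs}^\dagger(\partial\mL_{\vs})\mL_{\vs}^\dagger$. The trade-off is that your argument is longer, but it makes explicit the chain-rule step and the pseudoinverse differentiability hypotheses that the paper's citation-based proof glosses over.
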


In particular, in our setting of fixed demands, we generalize the above result to the following.
\begin{lemma}[Congestion gradient]
\label{lem:switching_gradient}
    For a fixed demand $\vd \perp \vone$, the gradient of the  relaxed congestion \eqref{eq:relaxed_cong} is given elementwise for each edge $e \in E$ as
    \begin{equation}
        \label{eq:congestion_gradient_entries}
        \frac{\partial}{\partial s_e}\,\phi(\vs) = - w_e \abs{\ip{\va_e}{\mL_{\vs}^\dagger \vd}}^2 := -w_e\Delta_e^2 \leq 0, 
    \end{equation}
    where $\Delta_e = \hat{x}_i - \hat{x}_j$ is the voltage difference \textit{solution map} across edge $e=(i,j) \in E$. 
\end{lemma}
Using the gradient entries of the relaxed congestion~\eqref{eq:congestion_gradient_entries}, we will show that the relaxed congestion~\eqref{eq:relaxed_cong} inherits many of the favorable properties of the total effective resistance.
% \begin{remark}
% \end{remark}

\subsection{Second-order information and Hessian}
\label{sec:hessian}
    To analyze the performance of our algorithms, we need to understand the spectral content of the Hessian of the congestion function $\phi(\vs)$ with respect to the switching variables $\vs$. We will first state and prove the following technical lemma.
    \begin{lemma}[Cauchy–Schwarz for voltage differences]
        \label{lemma:voltage_cauchy_schwarz}
        Let $\vd \in \R^n$ be a vector of demands, and let $ \vx = \mL_{\vs}^\dagger \vd$ be the voltages induced by the demands under switching strategy $\vs \in \c{0,1}^m$. Then, for any edge $ij \in E$ and switching strategy $\vs$, we have
        \begin{equation}
            \label{eq:voltage_cauchy_schwarz}
            \Delta_{ij}^2 = \abs{\va_{ij}^\T \vx}^2  \leq \rho_{ij} \cdot \phi(\vs) \qquad \text{for all } (i,j) \in E.
        \end{equation}
    \end{lemma}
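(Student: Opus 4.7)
The plan is to observe that the inequality is nothing but the Cauchy--Schwarz inequality applied in the semi-inner product induced by the pseudoinverse Laplacian $\mL_{\vs}^\dagger$. The only work is to unpack the definitions so that both sides of the bound sit inside this semi-inner product, at which point the inequality is immediate.

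First, I would identify the quantity in question. The incidence row $\va_{ij}$ for an edge $(i,j)\in E$ is exactly $\ve_i-\ve_j = \ve_{ij}$, so, with $\vx = \mL_{\vs}^\dagger \vd$, the voltage difference reads
\[
\Delta_{ij} \;=\; \va_{ij}^\T \vx \;=\; \ve_{ij}^\T \mL_{\vs}^\dagger \vd.
\]
Thus the inequality to prove is
\[
\bigl|\ve_{ij}^\T \mL_{\vs}^\dagger \vd\bigr|^2 \;\leq\; \bigl(\ve_{ij}^\T \mL_{\vs}^\dagger \ve_{ij}\bigr)\,\bigl(\vd^\T \mL_{\vs}^\dagger \vd\bigr),
\]
with the right-hand side equal to $\rho_{ij}(\vs)\cdot\phi(\vs)$ by Definition~\ref{def:eff_res_and_leverage} and the definition of $\phi$ in~\eqref{eq:relaxed_cong}.

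Next, I would note that because $\mL_{\vs}$ is PSD, its Moore--Penrose pseudoinverse $\mL_{\vs}^\dagger$ is also PSD; hence it admits a symmetric square root $\mL_{\vs}^{\dagger/2}$ with $\mL_{\vs}^\dagger = \mL_{\vs}^{\dagger/2}\mL_{\vs}^{\dagger/2}$. Setting $\vu := \mL_{\vs}^{\dagger/2}\ve_{ij}$ and $\vv := \mL_{\vs}^{\dagger/2}\vd$, the three quantities in the bound rewrite as the Euclidean inner products $\ve_{ij}^\T \mL_{\vs}^\dagger \vd = \ip{\vu}{\vv}$, $\rho_{ij}(\vs) = \norm{\vu}_2^2$, and $\phi(\vs) = \norm{\vv}_2^2$. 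The standard Cauchy--Schwarz inequality $|\ip{\vu}{\vv}|^2 \le \norm{\vu}_2^2\,\norm{\vv}_2^2$ then yields exactly~\eqref{eq:voltage_cauchy_schwarz}.

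There is no real obstacle; the only point worth a sentence of care is the use of the pseudoinverse rather than the inverse. Since $\ve_{ij} = \ve_i - \ve_j \perp \vone$ and $\vd \perp \vone$ by hypothesis, both vectors lie in $\Range(\mL_{\vs})$ on each connected component of~$G_{\vs}$, so the semi-inner product is unambiguous and the Cauchy--Schwarz step carries no degeneracy. (Alternatively, one can simply observe that Cauchy--Schwarz holds for any PSD form on $\R^n$, irrespective of whether the arguments lie in its range.) This immediately gives the stated bound with equality up to the placement of the factor $\rho_{ij}$ and $\phi(\vs)$, so no further calculation is required.
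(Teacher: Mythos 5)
Your proof is correct and follows essentially the same route as the paper's: both factor $\mL_{\vs}^\dagger = \mL_{\vs}^{\dagger/2}\mL_{\vs}^{\dagger/2}$, rewrite $\va_{ij}^\T \mL_{\vs}^\dagger \vd$ as an inner product of $\mL_{\vs}^{\dagger/2}\va_{ij}$ and $\mL_{\vs}^{\dagger/2}\vd$, and apply Cauchy--Schwarz to obtain $\rho_{ij}\cdot\phi(\vs)$ on the right. Your closing remark on why the pseudoinverse carries no degeneracy is a welcome clarification but does not change the argument.
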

    Lemma \ref{lemma:voltage_cauchy_schwarz} says that the squared voltage differences in electrical networks can never exceed the product of the effective resistance of the edge and the total congestion in the network. This is a consequence of the fact that $\mL_{\vs},\mL_{\vs}^\dagger \succeq 0$, and the Cauchy-Schwarz inequality applied to the voltages induced by the demands $\vd$ on the switched graph $G_{\vs} = (N,E,\vw \circ \vs)$. 

    \begin{lemma}[Operator norm bound on the Hessian]
        \label{lemma:hessian_operator_norm_bound}
        Given a switching strategy set $\setS \subseteq \s{0,1}^m$ and demands~$\vd$, let $\mH(\cdot) : \setS \to \S^m_+$ be the Hessian of the congestion function $\phi(\cdot)$. The operator norm of the Hessian $\mH(\cdot)$ is bounded as follows:
        \begin{align*}
        \opnorm{\mH} &\leq L := 2\max_{\vs \in \setS} \phi(\vs) 
        %=  \frac{2 \norm{\vd}_2^2}{\underset{\vs \in \setS}{\inf}\lambda_2(\mL_{\vs})} 
        \leq 2\norm{\vd}_{\mL_{\vs_0}}^2
        \leq \frac{2 \norm{\vd}_2^2}{\lambda_2\p{\mL_{\vs_0}}}.
        \end{align*}
        Moreover, if $\norm{\vd}_2 \leq 1$ always, and the template is such that $\lambda_2(\mL_{\vs_0}) \geq 1$, then it holds that
        $ \opnorm{\mH} \leq 2$. 
        
        % Additionally, if $\vd$ is a random vector satisfying Assumption \ref{assum:random_demands}, we have the following operator norm bound:
        % \[
        % \Expec[\vd]{\opnorm{\mH(\vs)}} \leq \frac{2(n-1)}{\lambda_2(\mL_{\vs})},
        % \]
        % where $\lambda_2(\mL_{\vs})$ is the second smallest eigenvalue of the switched Laplacian $\mL_{\vs}$.
    \end{lemma}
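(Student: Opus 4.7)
The plan is to derive the Hessian of $\phi$ in closed form via pseudoinverse calculus, upper-bound its quadratic form via a projection argument combined with Lemma~\ref{lemma:voltage_cauchy_schwarz} and the leverage bound $\ell_e(\vs)\leq 1$ from Foster's theorem, and finally chain the stated inequalities through monotonicity of $\phi$ and a spectral lower bound on $\mL_{\vs_0}$. First, iterating Lemma~\ref{lem:switching_gradient} with the pseudoinverse identity $\partial_{s_e}\mL_\vs^\dagger=-w_e\mL_\vs^\dagger\va_e\va_e^\T\mL_\vs^\dagger$ (valid on $\setS$ since Assumption~\ref{assum:backbone} pins $\Range(\mL_\vs)=\vone^\perp$) produces $\mH_{ef}(\vs) = 2 w_e w_f \Delta_e \Delta_f\,\va_e^\T\mL_\vs^\dagger\va_f$, equivalently $\tfrac{1}{2}\vu^\T\mH(\vs)\vu = \|(\mL_\vs^\dagger)^{1/2}\mA^\T\mW\mDelta\vu\|_2^2\geq 0$ for any $\vu\in\R^m$, which immediately establishes $\mH:\setS\to\S^m_+$.

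For the main upper bound $\opnorm{\mH(\vs)}\leq 2\phi(\vs)$, I would factor $\mL_\vs=\mB^\T\mB$ with $\mB:=\mS^{1/2}\mW^{1/2}\mA$, so that $\mPi_\vs:=\mB\mL_\vs^\dagger\mB^\T$ is the orthogonal projector onto $\operatorname{col}(\mB)$ and satisfies $0\preceq\mPi_\vs\preceq\mId$. Substituting $\mA=\mW^{-1/2}\mS^{-1/2}\mB$ on the support of $\vs$ and sandwiching with $\mPi_\vs\preceq\mId$ converts the quadratic form into the diagonal upper bound $\tfrac{1}{2}\vu^\T\mH(\vs)\vu\leq\sum_{e}w_e\Delta_e^2\,u_e^2/s_e$. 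Lemma~\ref{lemma:voltage_cauchy_schwarz} then gives the per-edge estimate $w_e\Delta_e^2\leq w_e\rho_e(\vs)\phi(\vs)=\ell_e(\vs)\phi(\vs)\leq\phi(\vs)$ by Foster's theorem, and combining with $s_e=1$ on the template edges (Assumption~\ref{assum:backbone}) closes the bound $\opnorm{\mH(\vs)}\leq 2\phi(\vs)$.

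The subsequent inequalities in the chain follow by monotonicity and a standard spectral bound. Lemma~\ref{lem:switching_gradient} shows $\partial_e\phi=-w_e\Delta_e^2\leq 0$, so $\phi$ is componentwise nonincreasing in $\vs$ and attains its maximum over $\setS$ at the sparsest feasible configuration $\vs_0$; hence $L=2\max_{\vs\in\setS}\phi(\vs)=2\phi(\vs_0)=2\|\vd\|_{\mL_{\vs_0}^\dagger}^2$. The final inequality $\|\vd\|_{\mL_{\vs_0}^\dagger}^2\leq\|\vd\|_2^2/\lambda_2(\mL_{\vs_0})$ follows from $\mL_{\vs_0}^\dagger\preceq\mId/\lambda_2(\mL_{\vs_0})$ on $\vone^\perp\ni\vd$, and the corollary $\opnorm{\mH}\leq 2$ is immediate under the normalizations $\|\vd\|_2\leq 1$ and $\lambda_2(\mL_{\vs_0})\geq 1$.

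The delicate step is the projection argument, since the factorization $\mA=\mW^{-1/2}\mS^{-1/2}\mB$ is singular whenever some $s_e=0$; I would justify it by continuity from the interior $(0,1]^m$, leveraging Assumption~\ref{assum:backbone} to ensure $\mL_\vs\succ 0$ on $\vone^\perp$ uniformly across $\setS$, so that $\mL_\vs^\dagger$ and $\mH$ remain smooth and the leverage inequality applies throughout.
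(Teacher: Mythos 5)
The step that fails in your proposal is the one you flagged as delicate, and it fails for a more fundamental reason than the $s_e\to 0$ singularity. Your projector $\mPi_\vs:=\mS^{1/2}\mW^{1/2}\mA\mL_\vs^\dagger\mA^\T\mW^{1/2}\mS^{1/2}$ is indeed the genuine orthogonal projector onto $\operatorname{col}(\mS^{1/2}\mW^{1/2}\mA)$ (the paper instead uses $\mP:=\mW^{1/2}\mA\mL_\vs^\dagger\mA^\T\mW^{1/2}$ and asserts $\opnorm{\mP}=1$; that holds only at $\vs=\vone$, since $\mA^\T\mW\mA\succeq\mL_\vs$ forces $\opnorm{\mP}\ge 1$, so you have actually corrected the paper's projector). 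But the price of your correction is the factor $1/s_e$ in $\tfrac12\vu^\T\mH\vu\le\sum_e w_e\Delta_e^2u_e^2/s_e$, and that factor does not go away: it lives on the \emph{off-template} edges, where $s_e$ ranges over $(0,1]$. Foster's bound $\ell_e(\vs)\le 1$ and the fact that $s_e=1$ only for $e\in T$ give at best $\opnorm{\mH}\le 2\phi(\vs)/\min_e s_e$, which is not the claimed $2\phi(\vs)$ and diverges at binary corners. So the sentence "combining with $s_e=1$ on the template edges closes the bound" is a gap, not a wrap-up.

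You should be aware that this is not merely a repair issue in the write-up: the inequality $\opnorm{\mH(\vs)}\le 2\phi(\vs)$ is false in general. Take two nodes joined by two parallel unit-weight edges, $T=\{e_1\}$, $\vd=(1,-1)^\T$. Then $\phi(\vs)=1/(s_1+s_2)$ and $\mH(\vs)=\tfrac{2}{(s_1+s_2)^3}\vone\vone^\T$; at the feasible corner $\vs_0=(1,0)$ one gets $\phi(\vs_0)=1$, hence $L=2$, but $\opnorm{\mH(\vs_0)}=4>L$. This shows the paper's step (1), $\opnorm{\mP}=1$, is where its own argument breaks, and it also shows that no amount of tightening can rescue the bound in the stated generality; a $1/s_{\min}$-type correction (as your computation naturally produces) or a restriction to a subset bounded away from the cube's faces appears necessary. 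Finally, a small notational point: the middle term in the chain should read $\norm{\vd}_{\mL_{\vs_0}^\dagger}^2$, not $\norm{\vd}_{\mL_{\vs_0}}^2$, consistent with $\phi(\vs_0)=\vd^\T\mL_{\vs_0}^\dagger\vd$; your proposal uses the correct form. The rest of your proposal (Hessian formula via the pseudoinverse derivative, monotonicity of $\phi$ in $\vs$, the spectral bound $\mL_{\vs_0}^\dagger\preceq\mId/\lambda_2(\mL_{\vs_0})$ on $\vone^\perp$) matches the paper's intent and is sound.
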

    The proof appears in the supplementary material.

\subsection{Structural properties of congestion}
\label{sec:gsc-lemma}

% The goal of this section is to prove that the relaxed congestion objective \eqref{eq:relaxed_cong}
% \begin{equation*}
%     \phi(\vs) \;=\; \vd^\T \mL_{\vs}^\dagger \vd,\qquad \mL_{\vs}\;=\;\mA^\T \diag(\omega\odot s)\mA,
% \end{equation*}
% is $(M,\nu)$--generalized self\nobreakdash-concordant on a natural domain when certain edges are pinned to one.  Note that $L(s)$ is a Laplacian matrix defined on the $(n-1)$\nobreakdash--dimensional subspace orthogonal to the all-ones vector $\mathbf{1}$.  
The goal of this section is to outline useful properties about the relaxed congestion \eqref{eq:relaxed_cong}.
\begin{remark}
    Set $\vone_T\in\{0,1\}^m$ as the indicator of a spanning tree $T\subseteq E$ of $G$.  In the feasible set
    \begin{equation*}
        \mathcal{D}_T \;:=\; \{\,\vs\in[0,1]^m : s_e=1 \text{ for all } e\in T\},
    \end{equation*}
    the graph remains connected for all $\vs\in \mathcal{D}_T$ because the tree edges have switching value one.
\end{remark}
Furthermore, we have the following useful property, which is reminiscent of \cite{ghosh_minimizing_2008}.
\begin{lemma}
\label{lemma:homogeneity}
The relaxed congestion \eqref{eq:relaxed_cong} is a convex, homogeneous function of degree $-1$, namely,
\[
\phi(\vs) = -\ip{\nabla \phi(\vs)}{\vs} \qquad \forall \vs \in \setD_T.
\]
\end{lemma}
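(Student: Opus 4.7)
The plan is to verify the three claims packaged in the lemma — convexity, degree $-1$ homogeneity, and the Euler-type identity $\phi(\vs) = -\ip{\nabla\phi(\vs)}{\vs}$ — in the order that minimizes redundant work. I would first establish homogeneity, derive the Euler identity from it as a one-line consequence, and then give a separate variational argument for convexity. The common prerequisite for all three steps is that $\mL_{\vs}$ retains a fixed one-dimensional kernel $\Span(\vone)$ on the relevant domain, which Assumption \ref{assum:backbone} guarantees on $\setD_T$; I would invoke this once up front so every pseudoinverse manipulation is justified.

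For homogeneity, I would extend the reasoning to arbitrary positive rescalings: for $t>0$ the matrix $\mL_{t\vs} = t\mL_{\vs}$ shares the kernel $\Span(\vone)$ with $\mL_{\vs}$, so $\mL_{t\vs}^\dagger = t^{-1}\mL_{\vs}^\dagger$ and hence $\phi(t\vs) = t^{-1}\phi(\vs)$. Differentiating in $t$ at $t=1$ (Euler's relation for degree $-1$) immediately yields $\ip{\nabla\phi(\vs)}{\vs} = -\phi(\vs)$. As an algebraic cross-check, I would also plug Lemma \ref{lem:switching_gradient} in directly: setting $\vx := \mL_{\vs}^\dagger\vd$ and $\Delta_e = \va_e^\T\vx$,
\[
-\ip{\nabla\phi(\vs)}{\vs} \;=\; \sum_{e\in E} s_e w_e \Delta_e^2 \;=\; \vx^\T\mL_{\vs}\vx \;=\; \vx^\T\vd \;=\; \vd^\T\mL_{\vs}^\dagger\vd \;=\; \phi(\vs),
\]
where $\mL_{\vs}\vx = \vd$ because $\vd\perp\vone$ lies in $\Range(\mL_{\vs})$.

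For convexity, I would write $\phi$ as a Fenchel-type supremum,
\[
\phi(\vs) \;=\; \sup_{\vx\perp\vone}\bigl\{\,2\vd^\T\vx \,-\, \vx^\T\mL_{\vs}\vx\,\bigr\},
\]
whose tightness I would verify via the first-order condition: it returns the unique maximizer $\vx = \mL_{\vs}^\dagger\vd$ (using $\vd\perp\vone$ and positive definiteness of $\mL_{\vs}$ on $\vone^\perp$), and substituting back recovers $\phi(\vs)$. Each inner function $\vs \mapsto 2\vd^\T\vx - \vx^\T\mL_{\vs}\vx$ is affine in $\vs$, so the pointwise supremum is convex on $\setD_T$. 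The only delicate point throughout is the pseudoinverse identity $\mL_{\vs}\mL_{\vs}^\dagger\vd = \vd$, which needs $\vd\perp\vone$ together with connectivity of $G_{\vs}$; once that is stated up front, every remaining step reduces to a one-line manipulation, and I do not expect any computational obstacle.
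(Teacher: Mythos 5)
Your proof is correct and follows essentially the same path as the paper. The paper derives the Euler identity directly via the chain $\phi(\vs) = \vx^\T\mL_{\vs}\vx = \sum_e s_e w_e |\va_e^\T\vx|^2 = -\ip{\nabla\phi(\vs)}{\vs}$ and reads off homogeneity, whereas you go the other way — scale $\mL_{t\vs}=t\mL_{\vs}$, get $\phi(t\vs)=t^{-1}\phi(\vs)$, and differentiate at $t=1$ — then offer the chain as a cross-check; both are two sides of the same computation (and note that the scaling argument implicitly works on an open cone containing $\setD_T$, since $t\vs$ leaves $\setD_T$ for $t\neq 1$, which is fine because $\phi$ is defined and smooth there). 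For convexity your Fenchel-supremum $\phi(\vs)=\sup_{\vx\perp\vone}\{2\vd^\T\vx - \vx^\T\mL_{\vs}\vx\}$ is exactly the paper's variational identity (the paper takes the supremum over all of $\R^n$, but the objective is constant along $\Span(\vone)$ so the two formulations agree), with the inner function affine in $\vs$ and hence the supremum convex.
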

The proof appears in Appendix~\ref{apdx:lemma:homogeneity}.

 \begin{theorem}[Generalized self\nobreakdash--concordance]
    \label{thm:backbone-gsc}
    %Let $T\subseteq E$ be a fixed spanning tree.  
    Let $T \subseteq E$. For $\vs \in\mathcal{D}_T$ define the relaxed congestion~$\phi(\vs)=\vd^\T \mL_{\vs}^\dagger \vd$.  Let~$\vrho_T := \diag(\mA \mL_T^\dagger \mA^\T)\in\R^m$ denote the vector of effective resistances in~$T$, i.e.,~$\vrho_T(e)=\va_e^{\top}\mL_T^{\dagger}\va_e$ where~$\mL_T=\mA^{\top}\diag(\vw\odot \vone_T)\mA$ is the Laplacian corresponding to~$T$ and~$\mL_T^\dagger$ its pseudoinverse.  Then~$\phi$ is~$(M,\nu)$--generalized self\nobreakdash--concordant on~$\mathcal{D}_T$ with
    \begin{equation}
        \nu=2\quad\text{and}\quad M\;=\;3\,\norm{\vw\odot \vrho_T}_2.
    \end{equation}
    That is, for all $s\in\mathcal{D}_T$ and all directions $\vu,\vv\in\R^m$ one has
    \begin{equation}
        \bigl|D^3\phi(s)[\vv, \vu, \vu]\bigr|\;\le\;3\,\bigl\|\vw\odot \vrho_T\bigr\|_2\,\|\vv\|_2\,\|\vu\|_{\nabla^2 \phi(\vs)}^2.
    \end{equation}
    In particular, the generalized self\nobreakdash--concordance constant $M$ is finite and independent of $\vs$ on the entire feasible domain $\mathcal{D}_T$.
\end{theorem}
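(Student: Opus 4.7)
The plan is to establish the theorem by reducing the third derivative of $\phi$ along a line to a scalar spectral quantity in a whitened basis, and then bounding that quantity uniformly on $\mathcal{D}_T$ by combining the rank-one structure of the edge perturbations with L\"owner monotonicity of the pseudoinverse enforced by the spanning backbone.

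\emph{Step 1 (resolvent expansion and whitening).} Since $T\subseteq E$ is a spanning backbone and $\vs\ge\vone_T$ on $\mathcal{D}_T$, the perturbed Laplacian $\mL_{\vs+t\vu}=\mL_\vs+t\mB(\vu)$ with $\mB(\vu):=\mA^\T\diag(\vw\circ\vu)\mA$ retains the null space $\Span\{\vone\}$ for all $t$ in a neighborhood of $0$. On the fixed range $\vone^\perp$, the Neumann expansion $\mL_{\vs+t\vu}^\dagger=\sum_{k\ge 0}(-t)^k\mL_\vs^\dagger(\mB(\vu)\mL_\vs^\dagger)^k$ converges, yielding closed forms for $g(t):=\phi(\vs+t\vu)$: $g''(0)=2\vd^\T\mL_\vs^\dagger\mB(\vu)\mL_\vs^\dagger\mB(\vu)\mL_\vs^\dagger\vd$ and $g'''(0)=-6\vd^\T\mL_\vs^\dagger(\mB(\vu)\mL_\vs^\dagger)^3\vd$. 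A two-parameter version gives $D^3\phi(\vs)[\vv,\vu,\vu]=-2(2T_1+T_2)$, where $T_2$ is the ``sandwich'' cyclic product with $\mB(\vv)$ in the middle and $T_1$ (equal to the remaining outer cyclic term by transposition) has $\mB(\vv)$ on the outside. Introducing $\vy:=\mL_\vs^{\dagger/2}\vd\in\vone^\perp$ and the symmetric operators $\mN_\vu:=\mL_\vs^{\dagger/2}\mB(\vu)\mL_\vs^{\dagger/2}$ and $\mN_\vv$ on $\vone^\perp$ gives $\|\vu\|_{\nabla^2\phi(\vs)}^2=2\vy^\T\mN_\vu^2\vy$, $g'''(0)=-6\vy^\T\mN_\vu^3\vy$, $T_2=\vy^\T\mN_\vu\mN_\vv\mN_\vu\vy$, and $T_1=\vy^\T\mN_\vv\mN_\vu^2\vy$.

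\emph{Step 2 (scalar spectral bounds).} Diagonalizing the symmetric matrix $\mN_\vu$ and applying $|\lambda^3|\le\opnorm{\mN_\vu}\lambda^2$ term-by-term yields $|\vy^\T\mN_\vu^3\vy|\le\opnorm{\mN_\vu}\vy^\T\mN_\vu^2\vy$. For the sandwich term $T_2$, the scalar inequality $|\vz^\T\mN_\vv\vz|\le\opnorm{\mN_\vv}\|\vz\|_2^2$ with $\vz=\mN_\vu\vy$ immediately gives $|T_2|\le\opnorm{\mN_\vv}\vy^\T\mN_\vu^2\vy$. The outer term $T_1$ is handled by a Cauchy--Schwarz estimate combined with the rank-one edge decomposition of $\mN_\vv$, producing a bound of the same order $\opnorm{\mN_\vv}\vy^\T\mN_\vu^2\vy$; assembling the combination $-2(2T_1+T_2)$ with care on constants yields $|D^3\phi(\vs)[\vv,\vu,\vu]|\le 3\opnorm{\mN_\vv}\vy^\T\mN_\vu^2\vy$, which specializes to $|g'''(0)|\le 3\opnorm{\mN_\vu}g''(0)$ in the diagonal case $\vv=\vu$.

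\emph{Step 3 (uniform operator-norm bound on $\mathcal{D}_T$).} Writing $\mN_\vu=\sum_{e\in E}u_ew_e\vb_e\vb_e^\T$ with $\vb_e:=\mL_\vs^{\dagger/2}\va_e$, the triangle inequality together with the rank-one identity $\opnorm{\vb_e\vb_e^\T}=\|\vb_e\|_2^2=\rho_e(\vs)$ gives $\opnorm{\mN_\vu}\le\sum_e|u_e|w_e\rho_e(\vs)$. The backbone constraint $\vs\ge\vone_T$ componentwise implies $\mL_\vs\succeq\mL_T$ in the L\"owner order on the common range $\vone^\perp$, whence $\mL_\vs^\dagger\preceq\mL_T^\dagger$ and $\rho_e(\vs)\le\vrho_T(e)$ uniformly in $\vs$. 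A final Cauchy--Schwarz in $\R^m$ produces $\opnorm{\mN_\vu}\le\|\vu\|_2\|\vw\odot\vrho_T\|_2$, and analogously $\opnorm{\mN_\vv}\le\|\vv\|_2\|\vw\odot\vrho_T\|_2$. Combined with Step 2, this delivers the theorem with $M=3\|\vw\odot\vrho_T\|_2$, independent of $\vs$.

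\emph{Main obstacle.} The technical heart is the uniform operator-norm estimate $\opnorm{\mN_\vu}\le\|\vu\|_2\|\vw\odot\vrho_T\|_2$ on $\mathcal{D}_T$: it requires both the rank-one decomposition of edge perturbations and the backbone-induced L\"owner monotonicity of $\mL^\dagger$, which is precisely why Assumption~\ref{assum:backbone} is imposed. A secondary subtlety is bounding the outer product $T_1$, where the straightforward spectral sandwich used for $T_2$ fails on generic symmetric matrices; one must exploit the graph-induced rank-one structure of $\mN_\vv$ together with a Cauchy--Schwarz estimate at the edge level in order to keep the absolute constant at $3$ rather than pick up a larger polarization-type factor.
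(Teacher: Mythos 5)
Your proposal is more careful than the paper's own argument in one crucial respect, but it contains a genuine gap at precisely the point you flag as the ``secondary subtlety,'' and that gap is real --- in fact, it coincides with an error in the paper's proof itself.

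\textbf{Where you improve on the paper.} You correctly expand the symmetric third Fr\'echet derivative as
\[
D^3\phi[\vv,\vu,\vu] = -2\p{2T_1 + T_2}, \qquad T_1 = \vy^\T \mN_{\vv}\mN_{\vu}^2\vy, \quad T_2 = \vy^\T\mN_{\vu}\mN_{\vv}\mN_{\vu}\vy,
\]
with $\vy = \mL_{\vs}^{\dagger/2}\vd$, $\mN_{\vh} = \mL_{\vs}^{\dagger/2}V(\vh)\mL_{\vs}^{\dagger/2}$. The paper's proof instead asserts $D^3\phi[s][w,u,u] = -6\,x^\T V(u)\mL^{-1}V(w)\mL^{-1}V(u)x = -6T_2$, which drops the two outer (non-cyclic) terms; this identity holds only in the diagonal case $w=u$, where $T_1 = T_2$. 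You have correctly identified the full structure where the paper has not. Your Step 3 (the uniform operator-norm bound $\opnorm{\mN_{\vv}}\le\|\vv\|_2\norm{\vw\odot\vrho_T}_2$ via L\"owner monotonicity $\mL_{\vs}\succeq\mL_T$ on $\vone^\perp$ and the rank-one decomposition $\mN_{\vv}=\sum_e v_e w_e \vb_e\vb_e^\T$) is the same as the paper's and is correct.

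\textbf{Where the gap is.} The assembly in Step 2 requires $|T_1| \lesssim \opnorm{\mN_{\vv}}\,\vy^\T\mN_{\vu}^2\vy$. You acknowledge that the spectral sandwich fails for $T_1$ and gesture at a ``Cauchy--Schwarz estimate at the edge level combined with the rank-one decomposition,'' but you never give the estimate, and I do not believe it exists. The inequality $|\vy^\T\mN_{\vv}\mN_{\vu}^2\vy| \le \opnorm{\mN_{\vv}}\,\vy^\T\mN_{\vu}^2\vy$ fails for generic symmetric $\mN_{\vv}$ and PSD $\mN_{\vu}^2$: e.g., on $\R^2$ take $\mN_{\vv}=\diag(1,-1)$, $\mN_{\vu}^2 = \vq\vq^\T$ with $\vq=(1,1)/\sqrt 2$, and $\vy$ nearly orthogonal to $\vq$; the ratio blows up. The edge structure does not save you: on a small graph (e.g., a triangle on $\vone^\perp\cong\R^2$) the matrices $\c{\vb_e\vb_e^\T}_{e\in E}$ span all symmetric $2\times 2$ matrices, so $\mN_{\vu},\mN_{\vv}$ are unconstrained and the counterexample is realizable. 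There is also an arithmetic slip: even granting $|T_1|,|T_2|\le\opnorm{\mN_{\vv}}\vy^\T\mN_{\vu}^2\vy$, you get $|D^3\phi|\le 6\opnorm{\mN_{\vv}}\vy^\T\mN_{\vu}^2\vy = 3\opnorm{\mN_{\vv}}\norm{\vu}_{\nabla^2\phi}^2$, not $3\opnorm{\mN_{\vv}}\vy^\T\mN_{\vu}^2\vy$; the final $M=3\norm{\vw\odot\vrho_T}_2$ would still come out, but the intermediate bound is off by a factor of $2$.

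\textbf{Upshot.} Because the paper's $-6T_2$ formula silently restricts to $w=u$, the paper's proof (and yours, if the $T_1$ bound cannot be filled) really establishes only the diagonal bound $|D^3\phi[\vu,\vu,\vu]|\le M\norm{\vu}_2\norm{\vu}_{\nabla^2\phi}^2$. That is what matters along a Frank--Wolfe line segment, so the downstream algorithmic consequences survive, but the theorem's off-diagonal statement as written is not proved by either argument. To make your proof correct as presented, you must either (i) supply a genuine estimate for $T_1$ whose form is compatible with the GSC scaling $M\norm{\vv}_2\norm{\vu}_{\nabla^2\phi}^2$ (I do not think constant $3$ survives), or (ii) state and prove only the diagonal version and note that it suffices for the later analysis.
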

The proof appears in Appendix~\ref{apdx:proof:backbone-gsc}.

% \paragraph{Utility of generalized self--concordance}

\section{Main Results}
\label{sec:main_results}

    \subsection{Approximation Primitives}
    \label{sec:algorithms}
        % An immediate consequence of our preceding analysis is that, \emph{for any demand vector} $\vd$, the unconstrained minimizer of the objective function is $\vs_\star(\vd) = \vone$, and $\vx_\star(\vd) = \mL^\dagger_{\vone} \vd$ are the corresponding voltages. Therefore, for the problem to be meaningful, we need to impose operational constraints on the switching variables $\vs$. We introduce two separate constraint sets to solve the problem.

    As shown in Thm.~\ref{thm:backbone-gsc}, $\phi$ is a generalized self-concordant function in the space of \emph{switching probabilities} $\vs \in \setD_T$. This makes a particularly simple variant of the Frank-Wolfe algorithm a good choice to design a distribution of configurations; see the recent works of~\cite{dvurechensky_generalized_2023,pham_scalable_fw_2023,carderera2024scalable}. We aim for our random output switching vector to be such that, when we sample edges without replacement (that is, \emph{draw a configuration}), it is likely to produce a connected subgraph with at most~$O(q)$ edges, and to satisfy~$d^\top L^\dagger_s d \leq \alpha \cdot d^\top L^\dagger_{\mathbf{1}_m} d$, where we desire~$\alpha$ to be as small as possible. To achieve an efficient implementation of this algorithm, in this section, we will present the key ingredients that leverage fast Laplacian solvers.

    \subsubsection{Fast Laplacian solver with switching}
    \label{sec:fast_solver}
        Below is a simple corollary of the result in \cite[Thm.~1.2.1.]{kyng_dissertation}, which proves useful in our subsequent analysis.
        \begin{corollary}[\cite{kyng_dissertation}]
            \label{cor:kyng:laplacian_solver}
            Fix a scalar $\delta <1/n^{100}$ and a switching strategy $\vs$ with at most $q$ non-zero entries, and let $\mL_{\vs} \succeq \vzero$ be as in Def.~\ref{def:sw_lap}. There exists an algorithm that returns a random Cholesky factor $\mC_{\vs}$ such that, with probability at least $1-O(\delta)$,
        \[
        \tfrac{1}{2}\mL_{\vs} \preceq \mC_{\vs}\mC_{\vs}^\T \preceq \tfrac{3}{2} \mL_{\vs}.
        \]
        This algorithm runs in time~$O\p{q\log^2\p{1/\delta}\log(n)}$. 
    
        Consequently, given demands~$\vd \perp \vone$ and an error tolerance~$\epsilon >0$ there is an algorithm $\mathsf{Solve}(\mL_{\vs},\vd,\delta,\epsilon)$ that returns an approximate voltage solution $\hat\vx(\vs)$ such that with probability at least $1-O(\delta)$, 
        \[ 
        \norm{\hat{\vx}(\vs) - \mL^\dagger_{\vs} \vd}_{\mL_{\vs}} \leq \epsilon \norm{\mL_{\vs}^\dagger \vd}_{\mL_{\vs}},
        \]
        where~$\norm{\vx}_{\mL} := \sqrt{\vx^\T \mL \vx}$ for $\vx \in \R^n$. 
        The algorithm runs in~$O(q \log^2(1/\delta)\log(n)\log(1/\epsilon))$ time.
        \end{corollary}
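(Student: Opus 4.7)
}
The statement has two parts, and I would handle them in sequence: first, the existence of the approximate Cholesky factor $\mC_{\vs}$; second, the construction of the iterative solver $\mathsf{Solve}$. The overall approach is to invoke Kyng's approximate Gaussian elimination theorem (Thm.~1.2.1 of~\cite{kyng_dissertation}) as a black box, instantiated on the \emph{switched} Laplacian $\mL_{\vs}$ rather than the ambient Laplacian $\mL_{\vone_m}$, and then wrap the resulting spectral sparsifier in standard preconditioned Chebyshev (or PCG) iteration.

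For the first claim, the key observation is that $\mL_{\vs}=\mA^\T\diag(\vw\circ\vs)\mA$ is itself a Laplacian of the weighted graph $G_{\vs}=(N,E,\vw\circ\vs)$ and, when $\|\vs\|_0\le q$, it has at most $q$ nonzero weighted edges. Applying Kyng's theorem to this $n$-vertex, $q$-edge Laplacian with failure parameter $\delta$ then yields a random lower-triangular factor $\mC_{\vs}$ satisfying $\tfrac{1}{2}\mL_{\vs}\preceq \mC_{\vs}\mC_{\vs}^\T\preceq\tfrac{3}{2}\mL_{\vs}$ with probability at least $1-O(\delta)$, in time $O(q\log^2(1/\delta)\log n)$. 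No new analysis is needed here; the only thing to verify is that the runtime scales with the post-switching edge count $q$, not the ambient edge count $m$. This is immediate because Kyng's procedure only touches rows of $\mA$ whose associated weight is nonzero, so zero-switched edges never enter the elimination order or the sample distributions.

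For the second claim, I would invoke preconditioned Chebyshev iteration using $\mC_{\vs}\mC_{\vs}^\T$ as the preconditioner for $\mL_{\vs}$. Conditioning on the success event from the first part, the preconditioned operator $(\mC_{\vs}\mC_{\vs}^\T)^{\pinv}\mL_{\vs}$, restricted to $\vone^\perp$ where $\mL_{\vs}$ is positive definite (using Assumption~\ref{assum:backbone} to ensure the template already connects the graph), has condition number at most $3$. The standard convergence analysis of preconditioned Chebyshev in the energy norm then gives
\[
\norm{\hat{\vx}(\vs)-\mL_{\vs}^{\pinv}\vd}_{\mL_{\vs}}\le\epsilon\,\norm{\mL_{\vs}^{\pinv}\vd}_{\mL_{\vs}}
\]
after $O(\log(1/\epsilon))$ iterations. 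Each iteration costs one matvec with $\mL_{\vs}$ (time $O(q)$) and one triangular back-solve with $\mC_{\vs}$ (time $O(q\log(1/\delta)\log n)$ using the sparsity and fill-in bound from Kyng's construction). Adding the setup cost for $\mC_{\vs}$ to the per-iteration cost summed over all iterations yields exactly the claimed runtime $O(q\log^2(1/\delta)\log n\log(1/\epsilon))$.

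The main obstacle, though modest, is book-keeping around the pseudoinverse: Chebyshev's guarantees are stated for a PD operator, but $\mL_{\vs}$ is only PSD with kernel $\Span(\vone)$. I would resolve this by initializing the iteration at $\vx^{(0)}=\vzero$ and noting that since $\vd\perp\vone$, all iterates automatically remain in $\vone^\perp$, so Chebyshev effectively runs on the restriction of $\mL_{\vs}$ to $\vone^\perp$, where it \emph{is} PD with the condition number bound above. Under this identification, the $\mL_{\vs}$-norm error bound on $\hat{\vx}(\vs)-\mL_{\vs}^{\pinv}\vd$ transfers verbatim from the classical analysis. The $\delta<1/n^{100}$ assumption is then a comfortable slack: a single draw of $\mC_{\vs}$ serves as a valid preconditioner throughout all $O(\log(1/\epsilon))$ iterations, since the only random event is the construction of $\mC_{\vs}$, and conditioning on it being $\tfrac{1}{2}$-$\tfrac{3}{2}$ accurate makes the remaining iteration deterministic.
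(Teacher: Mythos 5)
Your proposal is correct. The paper does not actually present a proof of this corollary---it is stated as an immediate consequence of Kyng's theorem and the standard wrapper of approximate Cholesky by preconditioned iteration, and the details are left implicit. Your plan fills in exactly those implicit steps in the expected way: apply Kyng's approximate Gaussian elimination to the $q$-edge switched Laplacian $\mL_{\vs}$ (noting the runtime scales with the number of retained edges, not $m$), and then wrap the resulting factor in preconditioned Chebyshev or PCG with condition number bounded by $3$ via the two-sided spectral guarantee. Your handling of the kernel is also right: since $\Null\bigl(\mC_{\vs}\mC_{\vs}^\T\bigr)=\Null(\mL_{\vs})=\Span(\vone)$ (forced by the two-sided spectral sandwich) and $\vd\perp\vone$, initializing at $\vzero$ keeps all iterates in $\vone^\perp$, so the classical energy-norm convergence rate applies verbatim on that subspace, and the $\delta$ failure budget is spent once on constructing $\mC_{\vs}$ rather than per iteration.

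One small bookkeeping nit: your per-iteration back-solve cost is $O(q\log(1/\delta)\log n)$, so the iteration phase alone gives $O(q\log(1/\delta)\log n\log(1/\epsilon))$, which is dominated by (not equal to) the stated bound $O(q\log^2(1/\delta)\log n\log(1/\epsilon))$; the extra $\log(1/\delta)$ factor comes from the setup cost of building $\mC_{\vs}$ and is simply absorbed since the paper states the total as a single big-$O$. This does not affect correctness.
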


    Access to a near-linear time Laplacian solver immediately provides a simple, near-linear time algorithm for \textit{simultaneously} determining the gradient of the congestion criterion~$\nabla \phi \in \R^m$, and pairwise voltage differences across all edges,~$\Delta \in \R^m$, which we present in Section \ref{sec:main:fast_grad}.
    % \begin{corollary}
    %     Given demands $\vd \perp \vone_n$ and switching probabilities $\vs \in \s{0,1}^m$ there exists an algorithm  $\textsc{ApproxDiff}(\cdot)$ that returns approximations of all pairwise voltage differences $\hat{\Delta}(s) = A \hat{x}(s) \in \R^m$ and the congestion gradient $\nabla \phi(s) \in \R^m$ in time $O(m\log^c(n)\log(1/\epsilon)).$
    % \end{corollary}

    \subsubsection{Switching strategies and edge budget}
    \label{sec:prob:switching_strategies}
    
    A useful reconfiguration problem is to select a subset of edges $S \subseteq E$ such that the graph remains connected, i.e., the algebraic connectivity $\lambda_2(\mL_S) > 0$, the number of edges in $S$ is at most $K$, and the congestion (graph energy) does not grow too much. There are two such ways we can model this constraint set. 

    Throughout, we treat all switching variables as the parameters of independent Bernoulli random variables, i.e., we assume that each edge $e \in E$ is switched with probability $s_e \in [s_{\sf min},1]$, with the lower bound to be defined later, and the switching decisions are independent. We want to show that $\vs$ is a good \emph{approximately optimal switching strategy}, in the following sense: if we sample a {\em random} configuration~$\vstilde$ with~$\vstilde_{e} \sim \mathsf{Ber}(\vs_{e})$, independently for each edge~$e \in E$, then the congestion of the resulting graph does not grow too much. We will define this notion precisely in Theorem \ref{thm:bern_rounding_main}.
    
    A natural constraint is to impose a budget on the number of edges that can be switched, that is, we require that~$\norm{\vs}_1 \leq q$, where $q$ is the edge budget. This is a natural constraint in many applications, such as electrical networks, where we may not be able to switch all edges due to physical limitations or operational costs.

    \subsubsection{Fast gradients}
    \label{sec:main:fast_grad}

    We now study the algorithm \textsc{ApproxDiff}, presented in Alg.~\ref{alg:gradient_compute}, which updates the congestion gradient and the voltage differences for a given switching strategy $\vs$ and demands $\vd$ in nearly-linear time.
    \begin{lemma}[Approximate gradient]
        \label{lemma:approximate_gradient}
        Let $\vs \in [0,1]^m$ be a switching strategy, and let $\vd \perp \vone$ be demands. Fix $\delta,\epsilon \in (0,1)$. Let
        \[
        \hat{\vx}(\vs)\ \gets\ \mathsf{Solve}(\mL_{\vs},\vd,\delta,\varepsilon)
        \]
        be a randomized Laplacian solve which, with probability at least $1-O(\delta)$, returns $\hat{\vx}$ satisfying
        \[
        \|\hat{\vx}-\vx\|_{\mL_{\vs}}\ \le\ \varepsilon\,\|\vx\|_{\mL_{\vs}},
        \qquad \text{where } \vx=\mL_{\vs}^{\dagger}\vd .
        \]
        Define the approximate gradient entrywise by
        $\widehat{\nabla}\phi(\vs)_e := -\,w_e\,(\va_e^\top\hat{\vx})^2$,
        and compute all entries with a single call to \textsc{Solve} via \textsc{ApproxDiff}.
        Then \textsc{ApproxDiff} runs in time~$O(m\log^c(n)\log(1/\epsilon))$, and there exists a constant $C$
        such that, with probability at least $1-O(\delta)$,
        \[
        \bigl\|\widehat{\nabla}\phi(\vs)-\nabla\phi(\vs)\bigr\|_{\nabla^2\phi(\vs)^\dagger}
        \ \le\ C\,\varepsilon\ \bigl\|\nabla\phi(\vs)\bigr\|_{\nabla^2\phi(\vs)^\dagger}.
        \]
    \end{lemma}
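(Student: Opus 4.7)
The runtime estimate follows immediately from the structure of \textsc{ApproxDiff}: one call to \textsc{Solve}$(\mL_{\vs},\vd,\delta,\epsilon)$, which by Corollary~\ref{cor:kyng:laplacian_solver} costs $O(q\log^{2}(1/\delta)\log(n)\log(1/\epsilon))$, followed by forming the $m$ scalars $-w_e(\va_e^{\T}\hat{\vx})^{2}$ in an $O(m)$ sweep over the incidence data. Since $q\le m$, the overall complexity is $O(m\log^{c}(n)\log(1/\epsilon))$ for an appropriate constant~$c$.

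For the error bound, I would first calibrate the $\mH^{\dagger}$-norm of the true gradient. Differentiating the Euler identity $\ip{\vs}{\nabla\phi(\vs)}=-\phi(\vs)$ from Lemma~\ref{lemma:homogeneity} along the radial direction yields $\mH\vs=-2\nabla\phi(\vs)$ and, multiplying by $\vs$, also $\vs^{\T}\mH\vs=2\phi(\vs)$. Since $\nabla\phi(\vs)=-\tfrac12\mH\vs\in\Range(\mH)$, these together give
\[
\|\nabla\phi(\vs)\|_{\mH^{\dagger}}^{2}\;=\;\tfrac14\,\vs^{\T}\mH\vs\;=\;\tfrac12\,\phi(\vs),
\]
which is the reference scale for the desired relative bound.

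Next I would decompose the error. Set $\vphi:=\hat{\vx}-\vx$ and $\mu_e:=\va_e^{\T}\vphi=\hat{\Delta}_e-\Delta_e$. A one-line algebraic expansion gives
\[
\widehat{\nabla}\phi(\vs)_e-\nabla\phi(\vs)_e\;=\;-w_e\bigl(2\Delta_e\mu_e+\mu_e^{2}\bigr)\;=:\;g^{(L)}_e+g^{(Q)}_e.
\]
The linear piece is dominant. From Lemma~\ref{lem:switching_gradient} one differentiates once more to obtain the factorization $\mH=2D\mA\mL_{\vs}^{\dagger}\mA^{\T}D$, where $D\in\R^{m\times m}$ is diagonal with $D_{ee}=w_e\Delta_e$. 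Testing $\vg^{(L)}$ against an arbitrary $\vz\in\R^{m}$ gives $\ip{\vg^{(L)}}{\vz}=-2\vphi^{\T}\mA^{\T}D\vz$, and because both $\vphi$ and $\mA^{\T}D\vz$ lie in $\vone^{\perp}=\Range(\mL_{\vs})$, Cauchy--Schwarz in the $\mL_{\vs}$-norm gives
\[
|\ip{\vg^{(L)}}{\vz}|^{2}\;\le\;4\,\|\vphi\|_{\mL_{\vs}}^{2}\cdot \vz^{\T}D\mA\mL_{\vs}^{\dagger}\mA^{\T}D\vz\;=\;2\,\|\vphi\|_{\mL_{\vs}}^{2}\,\|\vz\|_{\mH}^{2}.
\]
Taking the supremum over $\vz$ and using the solver guarantee $\|\vphi\|_{\mL_{\vs}}^{2}\le\epsilon^{2}\phi(\vs)=2\epsilon^{2}\|\nabla\phi(\vs)\|_{\mH^{\dagger}}^{2}$ yields the clean estimate $\|\vg^{(L)}\|_{\mH^{\dagger}}\le 2\epsilon\,\|\nabla\phi(\vs)\|_{\mH^{\dagger}}$.

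The main obstacle is the quadratic remainder $\vg^{(Q)}$. My plan is to bound it pointwise via Lemma~\ref{lemma:voltage_cauchy_schwarz} applied to $\vphi$, which gives $|g^{(Q)}_e|=w_e\mu_e^{2}\le \ell_e(\vs)\,\|\vphi\|_{\mL_{\vs}}^{2}\le\epsilon^{2}\ell_e(\vs)\phi(\vs)$, and then to aggregate via Foster's identity $\sum_e\ell_e(\vs)=n-1$ together with the Hessian factorization above to reach $\|\vg^{(Q)}\|_{\mH^{\dagger}}=O(\epsilon^{2})\,\|\nabla\phi(\vs)\|_{\mH^{\dagger}}$. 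A triangle inequality and a mild ceiling on $\epsilon$ then absorb this higher-order contribution into the constant~$C$. This aggregation step is the delicate part: unlike the linear piece, $\vg^{(Q)}$ does not reduce to a single $\mL_{\vs}$-Cauchy--Schwarz application, and care is needed to translate entrywise leverage bounds into a global $\mH^{\dagger}$-norm estimate without paying spurious factors of $n$ or $1/\lambda_{\min}^{+}(\mH)$.
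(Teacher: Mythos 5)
Your runtime argument, the calibration $\|\nabla\phi(\vs)\|^2_{\nabla^2\phi(\vs)^\dagger}=\tfrac12\phi(\vs)$ via the Euler identity, and the $\mL_{\vs}$-Cauchy--Schwarz bound $\|\vg^{(L)}\|_{\nabla^2\phi(\vs)^\dagger}\le 2\varepsilon\,\|\nabla\phi(\vs)\|_{\nabla^2\phi(\vs)^\dagger}$ for the linear piece are all correct and parallel the paper's treatment. Indeed, your Euler-identity route to the normalization is slightly cleaner than the paper's, which instead uses the factorization $\nabla^2\phi(\vs)=2\,\diag(\vzeta)\,\mP\,\diag(\vzeta)$ with $\zeta_e=\sqrt{w_e}\Delta_e$ to compute $\|\nabla\phi(\vs)\|_{\nabla^2\phi(\vs)^\dagger}=\tfrac1{\sqrt2}\|\vx\|_{\mL_{\vs}}$ directly.

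The genuine gap is the quadratic remainder $\vg^{(Q)}$, which you flag but do not bound. The route you sketch---entrywise leverage bounds $|g^{(Q)}_e|\le\varepsilon^2\ell_e(\vs)\phi(\vs)$ aggregated through Foster's theorem---cannot yield an estimate in the $\nabla^2\phi(\vs)^\dagger$-norm without an uncontrolled loss: that dual norm rescales coordinate $e$ by roughly $1/\zeta_e^2$, and there is no uniform lower bound on $|\zeta_e|$, so one cannot pass from an $\ell_1$-type sum of leverages to the Hessian-dual norm without paying a $1/\min_e\zeta_e^2$ factor. The paper avoids this entirely by working in the whitened coordinates of the factorization above: since $\|\vu\|_{\nabla^2\phi(\vs)^\dagger}=\tfrac1{\sqrt2}\|\mP\,\diag(\vzeta)^{-1}\vu\|_2$ and, with $\vbeta=\mW^{1/2}\mA\vphi$, one has $\diag(\vzeta)^{-1}\vg^{(Q)}=-\diag(\vbeta\oslash\vzeta)\,\vbeta$ with $\vbeta\in\Range(\mP)$ and $\|\vbeta\|_2=\|\vphi\|_{\mL_{\vs}}$, the quadratic term is reduced to $\tfrac1{\sqrt2}\max_e|\beta_e/\zeta_e|\cdot\|\vphi\|_{\mL_{\vs}}$ and then closed by a ratio bound $\max_e|\beta_e/\zeta_e|\le\|\vphi\|_{\mL_{\vs}}/\|\vx\|_{\mL_{\vs}}\le\varepsilon$, giving the overall constant $2\varepsilon+\varepsilon^2\le 3\varepsilon$. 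You should reproduce this ratio-based reduction rather than the Foster aggregation; and note, if you do, that justifying the pointwise ratio bound requires more care than simply dividing the two Cauchy--Schwarz \emph{upper} bounds $|\delta\Delta_e|\le\sqrt{\varrho_e}\|\vphi\|_{\mL_{\vs}}$ and $|\Delta_e|\le\sqrt{\varrho_e}\|\vx\|_{\mL_{\vs}}$, since a quotient of two upper bounds does not in general bound a quotient.
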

    The proof appears in Appendix~\ref{apdx:approx:grad}. 
    \begin{algorithm}[t]
    \label{alg:gradient_compute}
    \small
      \caption{Fast gradient and voltage difference computation for network reconfiguration.}
      \DontPrintSemicolon
      % \KwIn{Demands $\vd\perp\vone$, potential edge set $E$, switching strategy $\vs$, edge weights $\vw$, and solve tolerance $\delta$.}
      \KwIn{$\vs$, $\vd$, $\vw$, $E$, $\delta$, $\epsilon$.}
      % \KwRequire{$\vd\perp\vone$}
      % \KwIn{$\nabla,\Delta\in \R^m$, in-place storage of the congestion gradient and the voltage differences.}
      % \KwOut{congestion gradient $\nabla_{\vs} \phi(\vs)$ and the voltage differences $\Delta(\vs)$.}
      \KwOut{Approximations $\hat{\nabla}\phi(\vs)$ and $\hat{\Delta}(\vs)$.}
      \BlankLine
      
      \SetKwBlock{Begin}{function}{end function}
        \Begin($\textsc{ApproxDiff} {(} \vs,\vd,\vw,E,\delta,\epsilon {)}$)
        {
            $\mL_{\vs} \gets \sum_{ij \in \mathsf{supp}(\vs)} \mE_{ij} w_{ij}$ \tcp*{$O(m)$}
            $\nabla \gets \vzero_m,\; \Delta \gets \vzero_m$\;
            \tcp{--- Solve the Laplacian system ---}
            $\hat\vx(\vs) \gets \mathsf{Solve}\p{\mL_{\vs},\vd,\delta}$ \tcp*{$\tilde{O}(m)$}
            \tcp{--- compute the gradient entries ---}
            \For{$e=(i,j) \in E$}{
                \tcp{--- compute voltage diff. ---}
                $\Delta_{e} \gets \hat{x}_i(\vs) - \hat{x}_j(\vs)$ \tcp*{$O(1)$}
                \tcp{--- compute gradient entry ---}
                $\nabla_{e} \gets -w_{ij} \Delta_{e}^2$ \tcp*{$O(1)$}
            }
        \Return{$\nabla,\Delta$}
        }
    \end{algorithm}

    \begin{remark}
        In practice, Algorithm \ref{alg:gradient_compute} should be supplied a persistent copy of $\nabla,\Delta\in \R^m$ for in-place storage of the congestion gradient and the voltage differences.
    \end{remark}

% \subsection{Frank-Wolfe method with rounding}
% \label{sec:approx_alg}

% In this section, we introduce a principled randomized algorithm that yields provably good switching strategies for the congestion management problem. By initializing with a spanning tree, we can proceed with a Frank-Wolfe type procedure.
% \begin{theorem}[Randomized switching]
%     \label{thm:frank_wolfe_switching}
%     Let $\epsilon \in (0,1)$ be a tolerance, and let $K > N-1$ be the switching budget. Let $\vs_\star \in \s{0,1}^m$ be a global optimal solution to the relaxed problem in Section~\ref{sec:prelim:simplified}. Then, Algorithm~\ref{alg:switching} outputs an integral switching strategy $\vstilde \in \c{0,1}^m$ that is an $\epsilon$-approximation of the relaxed globally optimal solution $\vs_\star$, i.e., for all demands $\vd \perp\vone$,
%     \begin{equation}
%         \label{eq:thm:frank_wolfe_switching}
%         (1-\epsilon)\phi(\vs_\star,\vd) \leq \phi(\vstilde,\vd) \leq (1+\epsilon)\phi(\vs_\star,\vd),
%     \end{equation}
%     with probability at least $1-O(n^{-c})$.

%     % -------
%     % \[
%     % \Expec[]{\phi(\vstilde) - \phi(\vs_\star)} \leq \epsilon \cdot \phi(\vs_\star)
%     % \]
%     % with $K$ closed lines. The algorithm runs in time $O(m K \log^c(n)\log(1/\epsilon))$ time, where $c$ is a constant depending on the algorithm used to solve the Laplacian system.
% \end{theorem}

\begin{algorithm}[t]
\label{alg:switching}
    \small
  \caption{Montonic Frank-Wolfe with randomized rounding for network reconfiguration.}
  \DontPrintSemicolon
  % \KwIn{Demand vector $\vd \perp \vone$, edge set $E$, edge weights $\vw$, switching budget $K>N-1$ maximum number of iterations $T$, solve tolerance $\delta$.}
  \KwIn{$\vs_0$, $\vd \perp \vone$, $E$, $\vw$, $q$, $T$, $\epsilon$, $\delta$.}
  \KwOut{Random integral switching strategy $\vstilde$.}
  \BlankLine
  
  \SetKwBlock{Begin}{function}{end function}
    \Begin($\textsc{RandReconfig} {(} T,E,q,\vw,\vd,\epsilon,\delta {)}$)
    {
        $\mL_{\vs_0} \gets \sum_{ij \in \mathsf{supp}(\vs_0)} \mE_{ij} w_{ij}$ \tcp*{$O(m)$}
        % \tcp{--- compute pseudoinverse of $\mL_{\vs_0}$ ---}
        % $\mL^\dagger_{\vs_0} \gets \mathsf{Solve}\p{\mL_{\vs_0},\delta}$ \tcp*{$O(m\log^c(n)\log(1/\epsilon))$}
      \For{$t= 0,1,\ldots,$ \KwTo $T-1$}{
        $\eta_t \gets \frac{2}{t+2}$ \tcp*{Set step size}
        \tcp{--- compute the congestion gradient ---}
        % $\p{\nabla \phi(\vs_k)}_{ij} \gets -w_{ij} \abs{\ip{\ve_{ij}}{\mL_{\vs_k}^\dagger \vd}}^2$ for each potential edge $ij \sim 1,\ldots,m.$\;
        $\nabla \phi(\vs_t),\Delta(\vs_t) \gets \textsc{ApproxDiff}(\vs_t,\vd,\vw,E,\epsilon,\delta)$\; %\tcp*{$O(m\log^c(n)\log(1/\epsilon))$}
        \tcp{--- find a vertex ---}
        $\vv_t \gets \argmin_{\vv \in \s{0,1}^m,\norm{\vv}_1 \leq q} \ \ip{\nabla \phi(\vs_t)}{\vv}$\;% \tcp*{Top K}
        \tcp{--- update convex combination ---}
        $\vs_{t+1} \gets (1-\eta_t)\vs_t + \eta_t \vv_t$\;
      }
        \tcp{--- round the fractional solution ---}
        $\vstilde \gets \textsc{Round}(\vs_T)$ \tcp*{$\tilde{O}(m)$}
        \tcp{--- solve for the voltages ---}
        $\vxtilde \gets \mathsf{Solve}\p{\mL_{\vstilde},\vd,\delta,\epsilon}$\;
    \Return{$\vs_\star,\vxtilde$}
    }
\end{algorithm}

\subsection{Frank-Wolfe with randomized rounding}\label{sec:bern_round}

We apply a simple Frank-Wolfe procedure in Alg. \ref{alg:switching} to optimize the relaxed congestion function. After the iterative phase of the algorithm, we obtain a fractional switching vector $\vs_t\in[0,1]^m$ satisfying $\|\vs_t\|_1\le q$ and $(\vs_t)_e=1$ for all backbone edges $e\in T$.  To convert $\vs_t$ into an integral solution while preserving the edge weights $\vw$, we adopt the following procedure.  Fix a failure probability $\delta\in(0,1)$ and choose a baseline probability, e.g.~$p_{\min} \;=\; C\,\log(n/\delta)/n$ with a universal constant $C>0$.  Define 
\begin{equation}
\label{eq:sw_prob_def}
\bar{s}_e = \begin{cases}
    \max\{(\vs_t)_e,\,p_{\min}\} & e\in E\setminus T\\
    1 & e \in T.
\end{cases}
\end{equation}
Sample a random vector $\tilde{\vs}\in\{0,1\}^m$ by including each edge $e$ independently with probability $\bar{s}_e$.  If $\|\tilde{\vs}\|_1 > q$, remove edges in increasing order of $(\vs_t)_e$ until at most $q$ edges remain; if $\|\tilde{\vs}\|_1 < q$, add edges in decreasing order of $(\vs_t)_e$ until $q$ edges are selected. Another approach is to sample multiple draws of~$\vstilde$ until~$\|\vstilde\|_1\leq q$, without ensuring the constraint is tight. Scalar Bernoulli concentration can show that this will take at most a logarithmic number of draws. Denote by $\mL_{\vstilde}=\sum_e \tilde{s}_e w_e \va_e\va_e^\T$ the Laplacian of the sampled configuration.

% \begin{theorem}[Rounding preserves congestion]\label{thm:bern_rounding_main}
% Let $\vs_t\in[0,1]^m$ satisfy $\|\vs_t\|_1\le q$ and $(\vs_t)_e=1$ for all $e\in T$, and let $\tilde{\vs}$ be obtained by the independent Bernoulli rounding described above.  Then there exist universal constants $C_1,C_2>0$ such that, for any $\delta\in(0,1)$ and $p_{\min}=C_1\log(n/\delta)/n$, the following holds with probability at least $1-\delta$:
% \begin{equation}\label{eq:bern_spectral}
%   (1-\epsilon)\,\mL_{\vs_t} \;\preccurlyeq\; \mL_{\tilde{\vs}} \;\preccurlyeq\; (1+\epsilon)\,\mL_{\vs_t},
% \end{equation}
% where $\epsilon = C_2\,\sqrt{\tfrac{\log(n/\delta)}{n\,p_{\min}}}$.  Consequently, for any demand vector $\vd\perp\vone$, one has
% \[
%   (1-\epsilon)\,\phi(\vs_t) \;\le\; \phi(\tilde{\vs}) \;\le\; (1+\epsilon)\,\phi(\vs_t).
% \]
% Choosing $p_{\min}$ so that $\epsilon\le\alpha$ yields an integral configuration $\tilde{\vs}$ with at most $q$ switched edges and
% \[
%   \phi(\tilde{\vs}) \;\le\; (1+2\alpha)\,\phi(\vs_\star),
% \]
% where $\vs_\star$ is a minimizer of the relaxed problem.
% \end{theorem}
\begin{theorem}[Rounding preserves congestion]\label{thm:bern_rounding_main}
Let $\vs_t\in[0,1]^m$ be the output of the $t$-th iterate of the Frank-Wolfe procedure, which satisfies $\|\vs_t\|_1\le q$ and $(\vs_t)_e=1$ for all $e\in T$. Define~$\bar{s}_e := \max\c{\p{\vs_t}_e,p_{\sf min}}$ for all $e \notin T$, and~$\bar{s}_e =1$ for $e \in T$. Obtain~$\tilde{\vs} \in \c{0,1}^m$ by independent Bernoulli rounding with $\Pr\s{\tilde{s}_e =1}=\bar{s}_e$. Then, there exists an absolute constant $C>0$ such that for any $\delta \in (0,1)$, the following holds with probability at least $1-\delta$:
\begin{equation}\label{eq:bern_spectral}
  (1-\epsilon)\,\mL_{\vsbar} \;\preccurlyeq\; \mL_{\tilde{\vs}} \;\preccurlyeq\; (1+\epsilon)\,\mL_{\vsbar}, \qquad \mathsf{on} \; \Span(\vone)^\perp,
\end{equation}
where 
\[
\epsilon = C \sqrt{\frac{R\log((n-1)/\delta)}{\lambda_2(\mL_{\vsbar})}}, \qquad \mathsf{with} \qquad R = 2 \cdot \max_e w_e.
\]
Consequently, for any demand vector $\vd\perp\vone$, one has
\[
 \frac{1}{1+\epsilon} \phi(\vsbar) \leq \phi(\vstilde) \leq \frac{1}{1-\epsilon}\phi(\vsbar),
\]
with probability at least $1-\delta$, where $\phi(\vs) := \vd^\T \mL_{\vs}^\dagger \vd.$ 
% Moreover, we have $\Expec[]{\norm{\vstilde}_1} \leq q + \abs{E \setminus T}p_{\sf min}$ and, for all $t >0$,
% \[
% \Pr\p{\norm{\vstilde}_1 \geq q + \abs{E \setminus T} p_{\sf min} + t} \leq \exp\p{\frac{-t^2}{2\p{q+\abs{E\setminus T}p_{\sf min} +t/3}}}.
% \]
\end{theorem}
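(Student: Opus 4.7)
My strategy is to reduce the spectral inequality \eqref{eq:bern_spectral} to a single application of the matrix Bernstein inequality via preconditioning by $\mL_{\vsbar}^{\dagger/2}$, and then to obtain the congestion bound by taking pseudoinverses on $\vone^\perp$. First I decompose
\[
\mL_{\vstilde}-\mL_{\vsbar}=\sum_{e\notin T}(\tilde s_e-\bar s_e)\,w_e\,\va_e\va_e^\T,
\]
noting that backbone edges $e\in T$ satisfy $\tilde s_e=\bar s_e=1$ deterministically and therefore cancel. Let $\mPi$ be the orthogonal projector onto $\vone^\perp$, let $\mM:=(\mL_{\vsbar}^\dagger)^{1/2}$, and set $\vy_e:=\sqrt{w_e}\,\mM\va_e$. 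Since the backbone $T$ is connected, $\mL_{\vsbar}$ is strictly positive definite on $\vone^\perp$, so $\mM\mL_{\vsbar}\mM=\mPi$ and $\mM\mL_{\vstilde}\mM=\sum_e \tilde s_e\,\vy_e\vy_e^\T$. Thus \eqref{eq:bern_spectral} is equivalent to showing $\bigl\|\sum_{e\notin T}(\tilde s_e-\bar s_e)\vy_e\vy_e^\T\bigr\|_{\operatorname{op}}\le\epsilon$ with high probability, viewed as an operator on the $(n-1)$-dimensional subspace $\vone^\perp$.

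Next I apply matrix Bernstein to the independent, symmetric, mean-zero summands $\mX_e:=(\tilde s_e-\bar s_e)\vy_e\vy_e^\T$. The two ingredients are a uniform operator-norm bound and a variance proxy. Using $\|\va_e\|_2^2=2$ and $\lambdamax(\mL_{\vsbar}^\dagger|_{\vone^\perp})=1/\lambda_2(\mL_{\vsbar})$,
\[
\|\mX_e\|_{\operatorname{op}}\;\le\;\|\vy_e\|_2^2\;=\;w_e\,\va_e^\T\mL_{\vsbar}^\dagger\va_e\;\le\;\frac{2w_e}{\lambda_2(\mL_{\vsbar})}\;\le\;\frac{R}{\lambda_2(\mL_{\vsbar})}.
\]
For the variance, using $\bar s_e(1-\bar s_e)\le \bar s_e$ and $\sum_e \bar s_e\,\vy_e\vy_e^\T=\mPi$,
\[
\sum_e \E[\mX_e^2]\;=\;\sum_e \bar s_e(1-\bar s_e)\,\|\vy_e\|_2^2\,\vy_e\vy_e^\T\;\preceq\;\max_e \|\vy_e\|_2^2\cdot\sum_e \bar s_e\,\vy_e\vy_e^\T\;\preceq\;\frac{R}{\lambda_2(\mL_{\vsbar})}\,\mPi,
\]
so the variance proxy is also at most $R/\lambda_2(\mL_{\vsbar})$. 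Invoking matrix Bernstein in dimension $n-1$ and inverting the tail to be at most $\delta$ yields $\epsilon=C\sqrt{R\log((n-1)/\delta)/\lambda_2(\mL_{\vsbar})}$ in the subgaussian regime $\epsilon\le 1$, which is the range of interest.

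Finally, for the congestion statement, I use that the map $\mX\mapsto \vd^\T\mX^\dagger\vd$ reverses the semidefinite order when restricted to $\vone^\perp$. Sandwiching \eqref{eq:bern_spectral} with pseudoinverses gives $\tfrac{1}{1+\epsilon}\mL_{\vsbar}^\dagger\preceq \mL_{\vstilde}^\dagger\preceq \tfrac{1}{1-\epsilon}\mL_{\vsbar}^\dagger$ on $\vone^\perp$. Since $\vd\perp \vone$, evaluating this chain at $\vd$ yields the two-sided bound on $\phi(\vstilde)/\phi(\vsbar)$ on the same high-probability event.

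\textbf{Main obstacle.} The only real care needed beyond bookkeeping is that everything must be done modulo the kernel $\Span(\vone)$: one must use $\mL_{\vsbar}^\dagger\preceq(1/\lambda_2(\mL_{\vsbar}))\mPi$ rather than $\mId$, and apply Bernstein in the $(n-1)$-dimensional ambient space. The dithering threshold $p_{\min}$ does not directly enter this argument; its role is to guarantee that every non-backbone edge has a non-vanishing inclusion probability so that $\lambda_2(\mL_{\vsbar})$ stays adequately bounded away from zero for the surrounding Frank-Wolfe analysis (and so that the post-processing steps that truncate $\|\vstilde\|_1$ to $q$ only perturb leverage scores mildly, which is handled separately).
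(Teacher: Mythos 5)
Your argument is correct and reaches the stated bound, but it travels a noticeably different road than the paper's. The paper sets $\mY_e = \xi_e w_e \va_e\va_e^\T$, keeps these summands uncentered and positive semidefinite, and invokes a matrix Chernoff inequality (its Lemma~\ref{lem:psd_chernoff}, attributed to Tropp), whose deviation exponent carries $\mu_{\min}=\lambda_2(\mL_{\vsbar})$ directly. You instead subtract the mean, conjugate by $\mM=\mL_{\vsbar}^{\dagger/2}$ so that the expectation becomes the projector $\mPi$, and apply matrix Bernstein to the centered summands $\mX_e=(\tilde s_e-\bar s_e)\vy_e\vy_e^\T$; here $\lambda_2(\mL_{\vsbar})$ enters through the whitened uniform bound $\|\vy_e\|_2^2\le 2w_e/\lambda_2(\mL_{\vsbar})$ and the variance proxy. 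Both routes hit $\epsilon\asymp\sqrt{R\log((n-1)/\delta)/\lambda_2(\mL_{\vsbar})}$. What your version buys is rigor: the standard matrix Chernoff gives bounds on $\lambda_{\min}$ and $\lambda_{\max}$ of the sum, not a Loewner sandwich against a general expectation, so the paper's stated Lemma~\ref{lem:psd_chernoff} only becomes a direct consequence of Tropp's result after the same $\mM$-preconditioning you carry out explicitly; you have made that step visible rather than implicit. Your treatment of the final implication is also slightly more careful than the paper's one-line remark, since you invoke operator antitonicity of the pseudoinverse on $\Span(\vone)^\perp$ and evaluate at $\vd\perp\vone$, which is exactly how the spectral sandwich should be converted into the two-sided congestion bound. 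One small sharpening you could note: $\|\vy_e\|_2^2 = w_e\,\va_e^\T\mL_{\vsbar}^\dagger\va_e$ is precisely the leverage score $\ell_e(\vsbar)\le 1$, so the preconditioned uniform bound admits a tighter, $\lambda_2$-free form; you intentionally used the looser $R/\lambda_2$ bound to match the theorem as written, which is fine for the statement at hand.
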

The proof appears in Appendix~\ref{apdx:thm:bern_rounding_main}.

\subsection{Approximation certificate}
\label{sec:convergence_certification}
In this section, we study an explicit, computable optimality certificate for a family of switch distributions.
% \begin{theorem}
%     \label{thm:fw_gap_convergence}
%     Let $\vs_t \in \s{0,1}^m$ be the $t$-th iterate of the switching probabilities in the Frank-Wolfe algorithm. Set
%     \[
%     \vv_t \in \argmin_{\vv \in S} \ \ip{\nabla\phi(\vs_t)}{\vv},
%     \]
%     where $S \subseteq \s{0,1}^m$ is a convex set of almost surely connected switching distributions. If%$S_q := \c{\vs \in \s{0,1}^m \ : \ \norm{s}_1 \leq q, \ s_e = 1 \quad \forall e \in S_0}$. If
%     \[
%     \ip{\nabla \phi(s_t)}{s_t - v_t} \leq \tau \cdot \phi(s_t),
%     \]
%     then
%     \[
%     \phi(s_t) - \phi(s_\star) \leq \frac{\tau}{1-\tau} \phi(s_\star).
%     \]
    
% \end{theorem}
\begin{theorem}
    \label{thm:fw_gap_convergence}
    Let $\setS \subseteq \s{0,1}^m$ be a convex set of almost surely connected switching distributions, and let $\vs_t \in \setS$ be the $t$-th Frank-Wolfe iterate of the switching probabilities in the Frank-Wolfe algorithm. Set
    \begin{equation}
    \label{eq:lmo_vector}
    \vv_t^\star \in \argmin_{\vv \in S} \ \ip{\nabla\phi(\vs_t)}{\vv},
    \end{equation}
    Then, if
    $
    \ip{\nabla \phi(\vs_t)}{\vs_t - \vv_t^\star} \leq \tau \cdot \phi(\vs_t),
    $
    we have that
    \[
    \phi(\vs_t) - \phi(\vs_\star) \leq \frac{\tau}{1-\tau} \phi(\vs_\star)\; \iff \; \phi(\vs_t) \leq \frac{1}{1-\tau}\phi(\vs_\star),
    \]
    where $\vs_\star$ is a global minimizer of the convex function $\phi(\cdot)$ over $S$.
\end{theorem}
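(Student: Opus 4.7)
The plan is to prove the theorem in one short chain of inequalities, combining the convexity of $\phi$ (which follows from Lemma~\ref{lemma:homogeneity} and the generalized self-concordance in Theorem~\ref{thm:backbone-gsc}) with the defining property of the linear minimization oracle vertex $\vv_t^\star$. Nothing here requires the self-concordant structure or the edge-budget geometry; the argument is purely a consequence of convex duality via the Frank-Wolfe gap.

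First, I would write down the first-order lower bound coming from convexity of $\phi$ at the iterate $\vs_t$:
\[
\phi(\vs_\star) \;\geq\; \phi(\vs_t) + \ip{\nabla\phi(\vs_t)}{\vs_\star-\vs_t},
\]
which, rearranged, reads $\phi(\vs_t)-\phi(\vs_\star) \leq \ip{\nabla\phi(\vs_t)}{\vs_t-\vs_\star}$. Next, since $\vv_t^\star$ is by definition~\eqref{eq:lmo_vector} a minimizer of the linear functional $\vv\mapsto\ip{\nabla\phi(\vs_t)}{\vv}$ over $\setS$ and $\vs_\star\in\setS$, we have $\ip{\nabla\phi(\vs_t)}{\vv_t^\star}\leq\ip{\nabla\phi(\vs_t)}{\vs_\star}$, and hence
\[
\ip{\nabla\phi(\vs_t)}{\vs_t-\vs_\star}\;\leq\;\ip{\nabla\phi(\vs_t)}{\vs_t-\vv_t^\star}.
\]
Invoking the hypothesis $\ip{\nabla\phi(\vs_t)}{\vs_t-\vv_t^\star}\leq\tau\,\phi(\vs_t)$ then yields
\[
\phi(\vs_t)-\phi(\vs_\star)\;\leq\;\tau\,\phi(\vs_t).
\]

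The final step is a one-line algebraic manipulation: rearrange the displayed inequality to $(1-\tau)\phi(\vs_t)\leq\phi(\vs_\star)$, which, upon dividing by $1-\tau>0$, gives $\phi(\vs_t)\leq\phi(\vs_\star)/(1-\tau)$, and subtracting $\phi(\vs_\star)$ from both sides produces the claimed bound $\phi(\vs_t)-\phi(\vs_\star)\leq \tfrac{\tau}{1-\tau}\phi(\vs_\star)$. The two forms of the conclusion are plainly equivalent.

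There is really no main obstacle here; the only subtlety worth flagging in the write-up is that the argument implicitly requires $\tau<1$ so that the division is valid and the bound is meaningful, and that $\phi(\vs_t)\geq 0$ (which holds because $\mL_{\vs}^\dagger\succeq 0$) so that the sign of the hypothesis is consistent with the direction of the inequality. Both conditions will be automatic in our setting, since $\phi$ is nonnegative on $\setD_T$ and the Frank-Wolfe gap is guaranteed to shrink below any fixed $\tau\in(0,1)$ after finitely many iterations by the convergence theory of the monotonic Frank-Wolfe method applied to the generalized self-concordant function $\phi$.
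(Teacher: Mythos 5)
Your proof is correct and follows essentially the same route as the paper: convexity of $\phi$ gives the first-order lower bound, optimality of $\vv_t^\star$ for the linear minimization problem upgrades the bound to the Frank--Wolfe gap, and the hypothesis plus a one-line rearrangement finish. The only cosmetic difference is that the paper names the gap $g(\vs_t)$ explicitly whereas you inline it; your additional remarks that $\tau<1$ and $\phi\geq 0$ are needed for the rearrangement to be valid are correct and are left implicit in the paper.
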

In particular, for the top-$q$ set with backbone,  we have the following result.
\begin{corollary}[$\alpha$-competitive approximation certificate]
    \label{thm:alpha_certificate}
    Let $\vs_0 \in \c{0,1}^m$ be a connected \emph{backbone}, and consider the feasible region of connected configurations  
    \[
    \begin{aligned}
    S_q(\vs_0) := \c{\vs \in \s{0,1}^m  :  
    \norm{\vs}_1 \leq q, \quad s_e =1 \; \forall e \in \mathsf{supp}(\vs_0)}.
    \end{aligned}
    \]
    Fix a parameter $\alpha \in (0,1)$. For any connected configuration $s \in S_q(s_0)$, define the linear minimization oracle $v_\star(s) := \argmin_{v \in S_q(s_0)} \, \ip{\nabla \phi(s)}{v}$, where, for each $e \in E$, 
    \begin{equation}
    \label{eq:specific_lmo_entrywise}
    \p{v_\star(s)}_e := \begin{cases}
        1 & \text{if } e \in \mathsf{supp}(s_0) \\
        1 & \text{if } e \in \mathsf{Top}_{q-n+1}\c{\p{\mId_m-\mS_0}\nabla \phi(s)}\\
        0 & \mathsf{otherwise,}
    \end{cases} 
    \end{equation}
    where $\mS_0 := \diag(\vs_0).$ If, for some $\tau>0$, we have
    \[
    \ip{\nabla \phi(s)}{s-v_\star(s)} \leq \tau  \phi(s), \quad \tau := \frac{\alpha}{1+\alpha}, \quad \alpha \in (0,1), 
    \]
    then $s$ is an $\alpha$-approximate solution to the network reconfiguration problem. That is, for any potentially non-unique minimizer $s_\star \in \argmin_{s\in S_q(s_0)}\, \phi(s)$, we have
    \[
    \phi(s) \leq \p{1+\alpha} \phi(s_\star), 
    \]
    with $\alpha = \frac{\tau}{1-\tau} <1,$ i.e., the approximation is \textit{efficient}.
\end{corollary}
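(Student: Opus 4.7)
The plan is to instantiate Theorem~\ref{thm:fw_gap_convergence} on the specific polytope $S_q(\vs_0)$, identify the linear minimization oracle in closed form, and then perform a one-line calibration of $\tau$. First, I would verify applicability of Theorem~\ref{thm:fw_gap_convergence}: the set $S_q(\vs_0)$ is a convex polytope in $[0,1]^m$, and every $\vs \in S_q(\vs_0)$ satisfies $\vs \ge \vs_0$ componentwise, so the induced graph contains the connected backbone $\mathrm{supp}(\vs_0)$. The ``almost surely connected'' hypothesis is therefore met deterministically. Convexity of $\phi$ on $S_q(\vs_0)$ follows from Lemma~\ref{lemma:homogeneity} (or directly from the fact that $\vd^\T\mL_{\vs}^\dagger\vd$ is convex in $\vs$ on the connected cone).

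Next, I would derive the explicit LMO in \eqref{eq:specific_lmo_entrywise}. By Lemma~\ref{lem:switching_gradient}, $\nabla\phi(\vs)_e = -w_e\Delta_e^2 \le 0$ for every $e\in E$, so minimizing $\ip{\nabla\phi(\vs)}{\vv}$ over $S_q(\vs_0)$ is equivalent to maximizing $\sum_e |\nabla\phi(\vs)_e|\,v_e$ subject to $v_e\in[0,1]$, the budget $\sum_e v_e \le q$, and the mandatory backbone assignment $v_e = 1$ for $e\in\mathrm{supp}(\vs_0)$. Under Assumption~\ref{assum:backbone} with a spanning-tree backbone of size $n-1$, fixing the backbone consumes $n-1$ units of the budget and leaves $q-n+1$ units for the remaining edges. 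This residual problem is a fractional knapsack with uniform item sizes, whose optimum is attained at an integer vertex: assign $v_e=1$ to the $q-n+1$ off-backbone edges of largest $|\nabla\phi(\vs)_e|$ (equivalently, most negative gradient) and $v_e=0$ to all others. This is precisely \eqref{eq:specific_lmo_entrywise}, and in particular, the minimizer over the continuous polytope can be taken to be integral.

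Finally, I would substitute $\tau = \alpha/(1+\alpha)$ into the conclusion of Theorem~\ref{thm:fw_gap_convergence}. A short calculation gives $1-\tau = 1/(1+\alpha)$ and hence $\tau/(1-\tau) = \alpha$, which yields
\[
\phi(\vs) - \phi(\vs_\star) \;\le\; \alpha\,\phi(\vs_\star),
\qquad\text{i.e.,}\qquad \phi(\vs) \;\le\; (1+\alpha)\,\phi(\vs_\star).
\]
The efficiency restriction $\alpha<1$ corresponds exactly to the Frank--Wolfe gap threshold $\tau<1/2$.

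I do not expect any substantive obstacle. The corollary is essentially a specialization of Theorem~\ref{thm:fw_gap_convergence} to the top-$q$-with-backbone polytope, and the only step requiring any thought is the LMO derivation, where the nonpositivity of the gradient (Lemma~\ref{lem:switching_gradient}) reduces the LP to a trivial greedy selection and guarantees an integer optimum. The calibration $\tau = \alpha/(1+\alpha)$ is a routine algebraic manipulation.
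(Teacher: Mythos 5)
Your proposal is correct and takes essentially the same route as the paper: invoke Theorem~\ref{thm:fw_gap_convergence}, use Lemma~\ref{lem:switching_gradient}'s nonpositivity of the gradient to identify the LMO over $S_q(\vs_0)$ as a greedy top-$(q-n+1)$ selection after fixing the backbone, and then do the algebraic calibration $\tau=\alpha/(1+\alpha)$. Your additional observations (integrality of the LMO vertex, the correspondence $\alpha<1 \iff \tau<1/2$) are accurate but not needed; the paper leaves them implicit.
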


\begin{remark}
    When computing the convergence certificate with the approximated gradient of Lemma \ref{lemma:approximate_gradient}, the Frank–Wolfe gap estimated by the approximate Laplacian solver differs from the true gap by at most a factor $1+O(\epsilon)$. 
\end{remark}

\begin{theorem}[End-to-end approximation guarantee with runtime complexity]
\label{thm:FW-rounded-output-convergence}
    Assume that the fixed backbone~$T\subseteq E$ satisfies~$\norm{\vd}_{\mL_T}^2 \leq 1$. Suppose that $p_{\sf min} = 0$. For any $\delta \in (0,1)$, Algorithm \textsc{RandReconfig} returns a random integral solution $\vstilde \in \c{0,1}^m$ that satisfies, with probability at least $1-\delta$,
    \[
    \phi(\vstilde) \leq \frac{1}{1-\epsilon}\phi(\vs_t) \leq \frac{1+\alpha}{1-\epsilon} \phi(\vs_\star), \quad  \mathsf{with} \quad \epsilon \lesssim \sqrt{\log(n/\delta)}
    \]
    and~$\Expec[]{\norm{\vstilde}}= q$, in total running time $\tilde{O}(mq/\alpha),$ where $\tilde{O}(\cdot)$ suppresses polylogarithmic factors in $n$ and the Laplacian solver tolerance. 
    % where $\vstilde$ is generated with independent Bernoulli rounding using where $\vs_\star$ is a minimizer of the relaxed problem, where
    % the total number of Frank-Wolfe iterations are
    % \[
    % t = \setO(\ldots),
    % \]
    % thus, the total running time of the algorithm is $\setO(\ldots)$
\end{theorem}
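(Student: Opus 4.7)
The plan is to chain together three previously established building blocks: Frank--Wolfe convergence on the generalized self-concordant $\phi$ (Theorem \ref{thm:backbone-gsc}), the $\alpha$-competitive certificate of Corollary \ref{thm:alpha_certificate}, and the Bernoulli-rounding spectral preservation of Theorem \ref{thm:bern_rounding_main}, and then to book-keep per-iteration cost using the approximate-gradient primitive of Lemma \ref{lemma:approximate_gradient}. The assumption $p_{\sf min}=0$ makes the chaining clean, since then the inflated probability vector $\vsbar$ appearing in Theorem \ref{thm:bern_rounding_main} coincides with the FW iterate $\vs_T$, so no extra distortion is introduced by upsampling.

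First, I would argue that after $T = O(q/\alpha)$ iterations, the fractional iterate $\vs_T$ satisfies the certificate condition $\ip{\nabla\phi(\vs_T)}{\vs_T - \vv_\star(\vs_T)} \leq \tau\,\phi(\vs_T)$ with $\tau = \alpha/(1+\alpha)$. For this I would invoke a monotonic Frank--Wolfe rate appropriate for $(M,2)$-GSC objectives (in the spirit of the cited works of Dvurechensky et al. and Pham et al.), whose inputs are the finite GSC constant $M$ supplied by Theorem \ref{thm:backbone-gsc}, the squared $\ell_2$ diameter of the feasible region (at most $O(q)$ on the top-$q$ polytope with backbone), and the operator-norm bound $\opnorm{\mH}\leq 2$ from Lemma \ref{lemma:hessian_operator_norm_bound}, which is in force thanks to the normalization $\norm{\vd}_{\mL_T}^2 \leq 1$. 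These inputs give an $O(q/t)$ decay of the best-iterate Frank--Wolfe gap. Applying Corollary \ref{thm:alpha_certificate} at the first iterate where the gap drops below $\tau\,\phi(\vs_T)$ yields $\phi(\vs_T)\leq (1+\alpha)\phi(\vs_\star)$ deterministically.

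Next, I would apply Theorem \ref{thm:bern_rounding_main} to the rounded output, obtaining $\phi(\vstilde)\leq \frac{1}{1-\epsilon}\phi(\vs_T)$ with probability at least $1-\delta$ and $\epsilon \lesssim \sqrt{\log(n/\delta)/\lambda_2(\mL_{\vs_T})}$, hence the stated sandwich $\phi(\vstilde) \leq \frac{1+\alpha}{1-\epsilon}\phi(\vs_\star)$. Linearity of expectation over independent Bernoullis gives $\Expec[]{\norm{\vstilde}_1} = \norm{\vs_T}_1 \leq q$. For the runtime, each FW iteration costs $\tilde O(m)$: one \textsc{ApproxDiff} call at $\tilde O(m)$ (Lemma \ref{lemma:approximate_gradient}), one linear-minimization oracle that selects the top $q-n+1$ entries of $(\mId-\mS_0)\nabla\phi(\vs_t)$ via a partial selection in $O(m)$, and an $O(m)$ convex combination. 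Multiplying by $T = O(q/\alpha)$ iterations and absorbing the final rounding plus single solve (both $\tilde O(m)$) gives the total $\tilde O(mq/\alpha)$, as claimed.

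The main obstacle is controlling the iteration count. Off-the-shelf FW rates for GSC functions typically bound the \emph{absolute} primal suboptimality $\phi(\vs_t)-\phi(\vs_\star)$, whereas Corollary \ref{thm:alpha_certificate} asks for the Frank--Wolfe gap to be a fraction of $\phi(\vs_t)$ itself. The conversion proceeds by using $\phi(\vs_t)\geq\phi(\vs_\star)$ together with a positive lower bound on $\phi(\vs_\star)$; the normalization $\norm{\vd}_{\mL_T}^2\leq 1$ both activates Lemma \ref{lemma:hessian_operator_norm_bound} and, combined with the monotone chain $\phi(\vs)\geq \phi(\vone) \geq \norm{\vd}_2^2/\lambdamax(\mL_{\vone})$, supplies the needed lower bound. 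A secondary subtlety is that \textsc{ApproxDiff} only returns an approximate gradient; by the remark following Corollary \ref{thm:alpha_certificate} this costs at most a $1+O(\epsilon)$ multiplicative distortion in the measured gap, which is absorbed into the $\tau$ threshold by setting the Laplacian-solver tolerance polynomially small in $n$, with no asymptotic effect on runtime.
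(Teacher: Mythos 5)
Your proposal chains the same three building blocks as the paper's proof (Frank--Wolfe convergence, the $\alpha$-competitive certificate of Corollary~\ref{thm:alpha_certificate}, and the rounding guarantee of Theorem~\ref{thm:bern_rounding_main}) and lands on the same $T = \Theta(q/\alpha)$ iteration count and $\tilde{O}(mq/\alpha)$ runtime. The one mechanical difference is that you invoke GSC-based FW rates, whereas the paper establishes the iteration count purely via the standard $L$-smooth rate $g(\vs_t)\le 2LD^2/(t+2)$, feeding in $L\le 2$ from Lemma~\ref{lemma:hessian_operator_norm_bound} and $D\le \sqrt{2(q-n+1)}$; the GSC constant $M$ from Theorem~\ref{thm:backbone-gsc} never actually enters the paper's argument for this theorem, so your GSC invocation is legitimate but superfluous extra machinery here.

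Your ``main obstacle'' paragraph is the genuinely interesting part and exposes a subtlety the paper itself only handles implicitly. Corollary~\ref{thm:alpha_certificate} requires the \emph{relative} bound $g(\vs_t) \le \tau\,\phi(\vs_t)$, but the standard FW rate delivers only the \emph{absolute} bound $g(\vs_t)\le \alpha$; bridging them needs $\phi(\vs_t)\ge 1+\alpha$, which neither you nor the paper secures from the stated hypotheses. Your proposed patch — lower-bounding $\phi(\vs_\star)$ via $\phi(\vone)\ge\norm{\vd}_2^2/\lambdamax(\mL_{\vone})$ — does not close this gap: the normalization $\norm{\vd}_{\mL_T}^2\le 1$ (used in the paper to mean $\phi(\vone_T)\le 1$) is an \emph{upper} bound on $\phi$, and since $\phi$ is nonincreasing in $\vs$ it in fact forces $\phi(\vs_\star)\le 1$ rather than $\ge 1$. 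To make the argument airtight one should either impose the lower normalization $\phi(\vs_\star)\ge 1$ explicitly, or carry the guarantee in the additive form $\phi(\vs_t)\le\phi(\vs_\star)+\alpha$ through to the rounding step rather than converting to a multiplicative factor.
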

The proof appears in Appendix~\ref{apdx:end-to-end-FW-l2}. Appendix~\ref{apdx:satisfying_the_budget_high_prob} gives an overview of a method to augment the probabilities to ensure that the budget constraint is satisfied with high probability in a single pass.

\section{Numerical Results}
\label{sec:num}

\subsection{Baseline formulation}
\label{sec:baseline-form}
The network reconfiguration studied in the present paper can be formulated as a mixed-integer second-order cone program (MISOCP). This structure is compatible with some commercial solvers, like Gurobi, to ostensibly solve the problem efficiently. Introducing a change of variable, we obtain the program
\begin{subequations}
    \label{eq:misocp}
    \begin{align}
        \min_{\vu,\vs,\vf} \quad & 2 \sum_{e \in E} w_e^{-1} u_e\\
        \st \quad &\vu \geq \vzero, \ \vs \in \c{0,1}^m, \ \vf \in \R^m,\\
        \quad &u_e s_e \geq \frac{1}{2}f_e^2, \quad \forall e \in E, \label{eq:misocp_soc}\\
        \quad &s_e =1 \quad \forall e \in S_0,\\
        \quad &\norm{\vs}_1 \leq q, \label{eq:misocp_power}\\
        \quad &\mA^\top \vf = \vd.
    \end{align}
\end{subequations}
The program \eqref{eq:misocp} is a mixed-integer second-order cone program (MISOCP). The objective function is a linearization of the congestion objective $\phi(s)$, where $u_e$ is an upper bound on the congestion of edge $e$. The constraints \eqref{eq:misocp_soc} are  second-order cone constraints (condtional on $s_e$) that enforce $u_e \geq \frac{1}{2}\frac{f_e^2}{s_e}$ when $s_e = 1$ and $u_e \geq 0$ when $s_e = 0$. The constraint \eqref{eq:misocp_power} enforces the power constraint. Finally, the constraint $\mA^\top \vf = \vd$ enforces flow conservation.

\subsection{Numerical implementation and test of the approximation scheme}
\label{sec:implementation}

%%%%%%%%%%%%%%%%%%%%%%%%%%%%%%
\begin{figure*}[t!]
    \centering
    \includegraphics[width=0.975\linewidth,keepaspectratio]{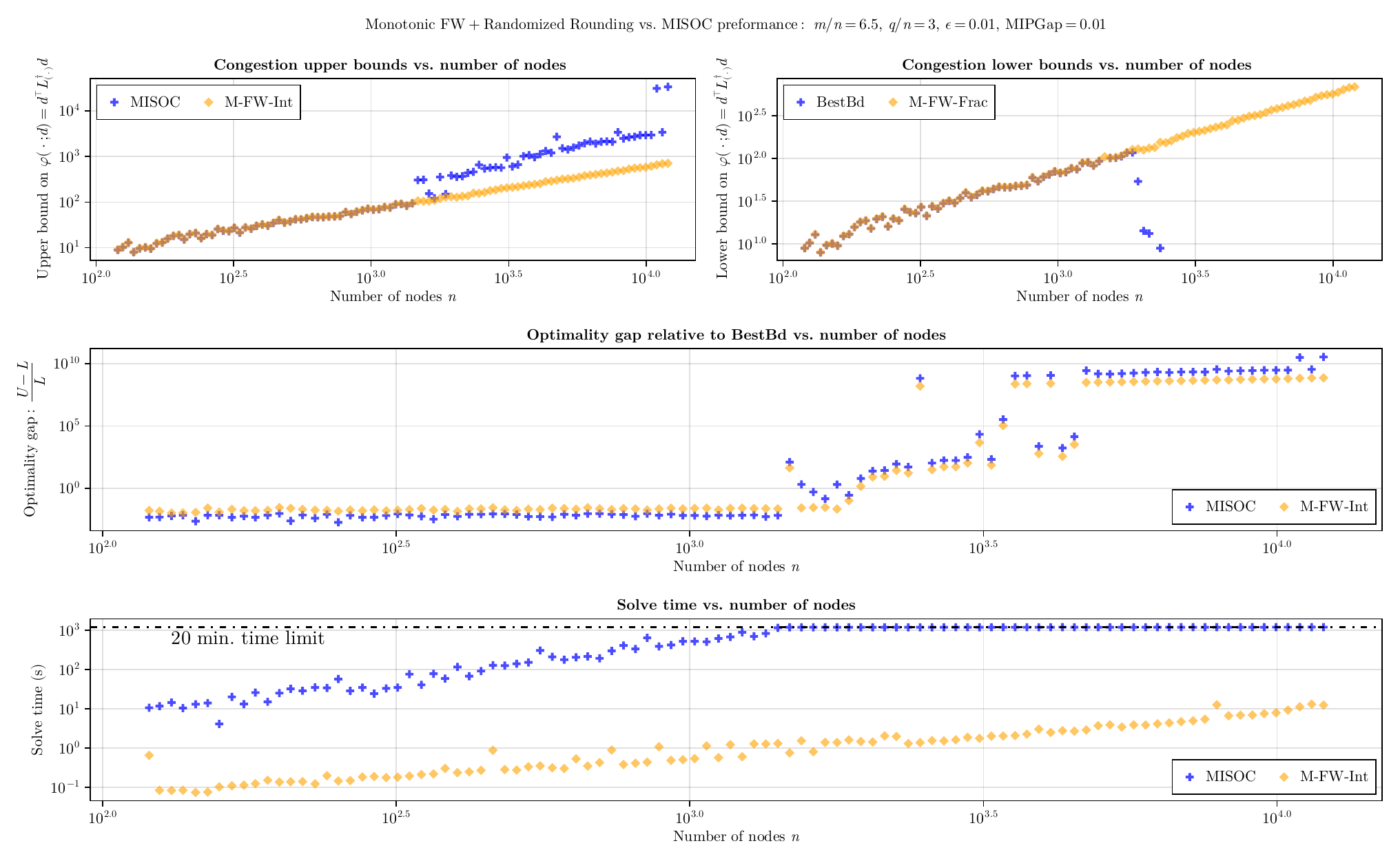}
    \caption{Comparison of Alg. \ref{alg:switching} and \eqref{eq:misocp} using Gurobi's MISOCP solver vs. number of nodes in a test graph. The bottom sub-figure depicts an orders-of-magnitude speedup.} 
    \label{fig:results}
\end{figure*}
%%%%%%%%%%%%%%%%%%%%%%%%%%%%%%

Algorithm \ref{alg:gradient_compute} and Algorithm \ref{alg:switching} were implemented in the Julia programming language, leveraging the fast Laplacian solver available in \texttt{Laplacians.jl}, presented in \cite{gao_robust_2023}. We generated random graphs over a large range of network sizes, using a different seed for each size. The experiments were run on a conventional laptop with a Ryzen 4750U chipset.

Similarly, as a baseline comparison, we implement the MISOCP formulation \eqref{eq:misocp} in the JuMP algebraic modeling language \cite{dunning_jump_2017}, which was then fed into Gurobi's MISOCP solver. As shown in Fig.~\ref{fig:results}, our implementation of Algorithm~\ref{alg:gradient_compute}, based on the monotonic Frank-Wolfe algorithm with randomized rounding, yields up to a 1000$\times$ improvement in the total solve time from Gurobi. In particular, the upper and lower bounds on the congestion are close across both methods. Moreover, when Gurobi fails to declare convergence within a  stipulated 20 minute time limit, the FW procedure achieves a superior optimality gap relative to the best incumbent lower bound determined by Gurobi.

\section{Conclusion}
\label{sec:conclusion}

We presented an approximation scheme to efficiently solve network reconfiguration problems. We proposed a particular instance of such an algorithm, \textsc{RandReconfig}, based on a conditional gradient procedure with randomized rounding. We demonstrated that our method outperforms a commercial MINLP solver by orders of magnitude on a relevant problem. Inclusion of line flow limits (edge capacities) is an essential component for future work, in addition to allowing some entries of the demand vector to be continuous decision variables (i.e., controllable generation). 

%The MISOCP formulation of the network reconfiguration problem \eqref{eq:misocp} can serve an additional purpose. In addition to providing the benchmark, the random configurations generated by the proposed algorithm can be used as a warm start for the MISOCP. 

\section*{Acknowledgments}
The authors thank Cameron Khanpour, Sergio A. Dorado-Rojas, Daniel Turizo, and Ashwin Pananjady for insightful discussions. This material is based upon work supported by the National Science Foundation Graduate Research Fellowship under Grant No. DGE-2039655. Any opinion, findings, and conclusions or recommendations expressed in this material are those of the authors and do not necessarily reflect the views of the National Science Foundation.

\bibliography{refs}
% \subsubsection*{References}

% References follow the acknowledgements. Use an unnumbered third level
% heading for the references section.  Please use the same font
% size for references as for the body of the paper---remember that
% references do not count against your page length total.

% \begin{thebibliography}{}
% \setlength{\itemindent}{-\leftmargin}
% \makeatletter\renewcommand{\@biblabel}[1]{}\makeatother
% \bibitem{} J.~Alspector, B.~Gupta, and R.~B.~Allen (1989).
%     \newblock Performance of a stochastic learning microchip.
%     \newblock In D. S. Touretzky (ed.),
%     \textit{Advances in Neural Information Processing Systems 1}, 748--760.
%     San Mateo, Calif.: Morgan Kaufmann.

% \bibitem{} F.~Rosenblatt (1962).
%     \newblock \textit{Principles of Neurodynamics.}
%     \newblock Washington, D.C.: Spartan Books.

% \bibitem{} G.~Tesauro (1989).
%     \newblock Neurogammon wins computer Olympiad.
%     \newblock \textit{Neural Computation} \textbf{1}(3):321--323.
% \end{thebibliography}

%%%%%%%%%%%%%%%%%%%%%%%%%%%%%%%%%%%%%%%%%%%%%%%%%%%%%%%%%%%%

\clearpage
\appendix

\section{Proofs}

\subsection{Proof of Lemma \ref{lem:ghosh_gradient}}
The following Lemma is a refreshed version of the result in \cite{ghosh_minimizing_2008}. 
\ifapdxthms
\begin{lemma}
    \label{lem:ghosh_gradient}
    %Let $\vs \in \R^m_+$ be switching variables,~$\mL_{\vs} := \mA^\T \mW^{1/2}\mS\mW^{1/2}\mA$ the switched Laplacian, and~
    Let~$R(\vs) = \sum_{i<j} \varrho_{ij}(\vs) = \frac{1}{2} \vone^\T \mR_{\vs} \vone$ be the total effective resistance of the graph~$G_{\vs} = (N,E,\vw \circ \vs)$. Then, 
    \[
    \frac{\partial}{\partial s_{ij}} R(\vs) = -n w_{ij} \ve_{ij}^\T\mL^{2\dagger}_{\vs} \ve_{ij}, \qquad ij \in E.
    \]
\end{lemma}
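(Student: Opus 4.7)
The plan is to reduce the statement to the well-known trace identity $R(\vs) = n\,\trace(\mL_{\vs}^\dagger)$ and then differentiate using the perturbation formula for the Moore--Penrose pseudoinverse of a symmetric matrix with locally constant kernel.

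First I would recall that since $\mR_{\vs}$ is the matrix of effective resistances $\varrho_{ij}(\vs) = (\ve_i-\ve_j)^\T \mL_{\vs}^\dagger(\ve_i-\ve_j)$, expanding $\frac{1}{2}\vone^\T \mR_{\vs}\vone = \sum_{i<j}\varrho_{ij}(\vs)$ and grouping terms yields
\[
R(\vs) = \sum_{i<j}\bigl(\mL_{\vs}^\dagger\bigr)_{ii} + \bigl(\mL_{\vs}^\dagger\bigr)_{jj} - 2\bigl(\mL_{\vs}^\dagger\bigr)_{ij} = n\,\trace(\mL_{\vs}^\dagger),
\]
where the last step uses $\mL_{\vs}^\dagger \vone = \vzero$ so that the off-diagonal contribution $\sum_{i\neq j}(\mL_{\vs}^\dagger)_{ij}$ vanishes and each diagonal entry is counted $n-1 + 1 = n$ times after symmetrization (via $\sum_{i<j}\bigl((\mL_{\vs}^\dagger)_{ii}+(\mL_{\vs}^\dagger)_{jj}\bigr) = (n-1)\trace(\mL_{\vs}^\dagger)$, then adding the diagonal from the $-2(\mL_{\vs}^\dagger)_{ij}$ piece via $\sum_j (\mL_{\vs}^\dagger)_{ij} = 0$).

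Next, I would parametrize $\mL_{\vs} = \sum_{kl} w_{kl} s_{kl}\, \ve_{kl}\ve_{kl}^{\T}$ so that $\partial \mL_{\vs}/\partial s_{ij} = w_{ij}\ve_{ij}\ve_{ij}^{\T}$. Restricting to an open neighborhood of $\vs$ on which the graph $G_{\vs}$ remains connected (so that $\Null(\mL_{\vs}) = \Span(\vone)$ is constant), the pseudoinverse is smooth and satisfies
\[
\frac{\partial \mL_{\vs}^\dagger}{\partial s_{ij}} \;=\; -\,\mL_{\vs}^\dagger \,\frac{\partial \mL_{\vs}}{\partial s_{ij}}\, \mL_{\vs}^\dagger \;=\; -\,w_{ij}\,\mL_{\vs}^\dagger \ve_{ij}\ve_{ij}^{\T}\mL_{\vs}^\dagger.
\]
Taking the trace, commuting it through the rank-one product, and multiplying by $n$ gives
\[
\frac{\partial R}{\partial s_{ij}}(\vs) \;=\; n\,\trace\!\Bigl(\frac{\partial \mL_{\vs}^\dagger}{\partial s_{ij}}\Bigr) \;=\; -n\,w_{ij}\,\ve_{ij}^{\T}\mL_{\vs}^{2\dagger}\ve_{ij},
\]
which is the claimed formula.

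The main obstacle is justifying the pseudoinverse derivative formula, which in general (for matrices whose kernel varies) does \emph{not} equal $-\mL_{\vs}^\dagger(\partial\mL_{\vs})\mL_{\vs}^\dagger$. Here this is fine because under Assumption~\ref{assum:backbone} (and, more generally, on any subset of the domain where $G_{\vs}$ stays connected) the kernel of $\mL_{\vs}$ is pinned to $\Span(\vone)$, so $\mL_{\vs}^\dagger$ varies smoothly in $\vs$ and the usual identity obtained from differentiating $\mL_{\vs}\mL_{\vs}^\dagger = \mId - \tfrac{1}{n}\vone\vone^\T$ applies without correction. With that point addressed, the remainder of the proof is a direct trace calculation.
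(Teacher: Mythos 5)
Your proof is correct, and it takes a genuinely more self-contained route than the paper's. The paper's proof is essentially a citation: it quotes the Ghosh--Boyd formula $\partial R/\partial w_{ij} = -n\,\ve_{ij}^\T\mL_{\vs}^{2\dagger}\ve_{ij}$ for the gradient with respect to \emph{weights}, then relies (implicitly, via the chain rule through $w_{ij}s_{ij}$) on the extra factor of $w_{ij}$ appearing when differentiating with respect to $\vs$, and tacks on the degree-$(-1)$ homogeneity identity as supporting context rather than as part of the derivation. You instead derive everything from scratch: you establish $R(\vs) = n\,\trace(\mL_{\vs}^\dagger)$ by expanding $\varrho_{ij} = (\mL_{\vs}^\dagger)_{ii}+(\mL_{\vs}^\dagger)_{jj}-2(\mL_{\vs}^\dagger)_{ij}$ and using $\mL_{\vs}^\dagger\vone=\vzero$ (the bookkeeping $(n-1)\trace(\mL_{\vs}^\dagger)+\trace(\mL_{\vs}^\dagger)=n\trace(\mL_{\vs}^\dagger)$ checks out), then differentiate the pseudoinverse directly using $\partial\mL_{\vs}/\partial s_{ij} = w_{ij}\ve_{ij}\ve_{ij}^\T$. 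What your approach buys is transparency: the $w_{ij}$ factor appears for an explicit reason rather than by appeal to a cited result, and, crucially, you flag the one subtle point the paper glosses over — the identity $\partial\mL_{\vs}^\dagger = -\mL_{\vs}^\dagger(\partial\mL_{\vs})\mL_{\vs}^\dagger$ requires the kernel to stay pinned at $\Span(\vone)$, which you correctly tie to the connectivity/backbone assumption. The paper's route is shorter but outsources this justification to Ghosh--Boyd; yours is more complete and would stand on its own even for a reader unfamiliar with that reference.
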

\fi
\begin{proof}[Proof of Lemma \ref{lem:ghosh_gradient}]
    In the setting of \cite{ghosh_minimizing_2008}, the switching variables are fixed as~$\vs = \vone$, and the gradient of the total effective resistance with respect to the \textit{weights} is given entrywise as the quadratic forms 
    \[
    \p{\nabla_{\vw} R}_{ij} = \frac{\partial }{\partial w_{ij}}R = -n \ve_{ij}^\T\mL^{2\dagger}_{\vs} \ve_{ij}, \qquad ij \sim 1,\ldots,m,
    \]
    where $\mL^{2\dagger}_{\vs} := \mL^\dagger_{\vs} \mL^\dagger_{\vs}$ is the square of the Moore-Penrose pseudo-inverse of the Laplacian matrix $\mL_{\vs}$. It is well-known that the total resistance $R(\vs)$ is a homogeneous function of degree $-1$;\footnote{Note: This also means that the gradient is homogeneous of degree $-2$.} this means that
    \[
    \ip{\nabla_{\vs}R(\vs)}{\vs} = - R(\vs).
    \]
    
    % Therefore, the gradient of the average congestion objective $\phi_{1}$ is given by
    % \[
    % \nabla_{\vs} \phibar(\vs) = \frac{1}{2n} \nabla_{\vs} R(\vs) = -\frac{1}{2} \diag\p{\mA \mL^{2\dagger}_{\vs} \mA^\T}. 
    % \] 
    % Entrywise, we have
    % \[
    % \p{\nabla_{\vs} \phibar(\vs)}_{ij} = \frac{\partial }{\partial s_{ij}}\phibar(\vs)  = -\frac{1}{2} \ve_{ij}^\T\mL^{2\dagger}_{\vs} \ve_{ij}, \quad \textsf{for all} \quad ij \sim 1,\ldots,m.
    % \]
    % So, as is well known, the total effective resistance is a homogeneous function of degree $-1$, namely
    % \[
    % \ip{\nabla_{\vs} \phibar(\vs)}{\vs} = \frac{1}{2n} \ip{\nabla_{\vs} R(\vs)}{\vs} = -\frac{1}{2n} R(\vs) = -\phibar(\vs).
    % \]    
\end{proof}

%%%%%%%%%%%%%%%%%%%%%%%%%%%%%%%%%%%%%%%%%%%

\subsection{Proof of Lemma \ref{lem:switching_gradient}}
\ifapdxthms
\begin{lemma}[Congestion gradient]
\label{lem:switching_gradient}
    For a fixed demand $\vd \perp \vone$, the gradient of the  relaxed congestion \eqref{eq:relaxed_cong} is given elementwise for each edge $e \in E$ as
    \begin{equation}
        \label{eq:congestion_gradient_entries}
        \frac{\partial}{\partial s_e}\,\phi(\vs) = - w_e \abs{\ip{\va_e}{\mL_{\vs}^\dagger \vd}}^2 := -w_e\Delta_e^2 \leq 0, 
    \end{equation}
    where $\Delta_e = \hat{x}_i - \hat{x}_j$ is the voltage difference \textit{solution map} across edge $e=(i,j) \in E$. 
\end{lemma}
\fi
 \begin{proof}[Proof of Lemma \ref{lem:switching_gradient}]
    Relax the switching decisions as~$\vs \in \s{0,1}^m$, and consider specified demands~$\vd \perp \vone$. Recall that we define the congestion or energy as~$\phi : \c{0,1}^m \times \R_+$ as
    \begin{equation}
        \label{eq:random_congestion}
        \phi(\vs) = \vd^\T \mL_{\vs}^\dagger \vd = \trace\p{\mL_{\vs}^\dagger \vd \vd^\T}. 
        % \phi(\vs) =  \vd^\T \mL_{\vs}^\dagger \vd.
        % \tag{Electrical congestion}
    \end{equation}
    The gradient of the electrical congestion \eqref{eq:random_congestion} with respect to the relaxed switching variables $\vs$ can be computed elementwise for each potential edge $\ell \sim (i,j)$ as follows:
    \begin{align*}
        \p{\nabla_{\vs} \phi}_{\ell} &= \frac{\partial}{\partial s_{\ell}} \trace\p{\mL_{\vs}^\dagger \vd \vd^\T} \\
        &= \frac{\partial}{\partial s_{\ell}}  \trace\s{\p{\p{\mL_{\vs} +  \vone \vone^\T/n}^{-1} - \vone\vone^\T/n} \vd \vd^\T} \\
        &\overset{(1)}{=} \trace\s{ \frac{\partial}{\partial s_\ell} \p{\mL_{\vs} +  \vone \vone^\T/n}^{-1}  \vd \vd^\T} \\
        &\overset{(2)}{=} -\trace\s{\p{\mL_{\vs} + \vone \vone^\T/n}^{-1} \ve_{\ell}\ve_{\ell}^\T w_\ell \p{\mL_{\vs} + \vone\vone^\T/n}^{-1}\vd\vd^\T} \\
        &\overset{(3)}{=} -w_{\ell} \vd^\T \mL_{\vs}^\dagger \mE_{\ell} \mL_{\vs}^\dagger \vd.
    \end{align*}
    In the above display, step (1) is due to the fact that $\vd \perp \vone$, step (2) is due to the matrix derivative of the inverse, and step (3) is due to cyclic property of the trace, and the fact that $\ve_{\ell} \in \Null(\vone \vone^\T)$, so 
    $$
    \mL_{\vs}^\dagger\ve_{\ell} = \p{\p{\mL_{\vs} + \vone \vone^\T/n}^{-1} - \vone\vone^\T/n}\ve_{\ell}.
    $$
    
    Finally, let $\hat\vx(\vs,\vd)= \mL_{\vs}^\dagger \vd $ be the voltages computed by solving the Laplacian system induced by switching strategy $\vs$ given demands $\vd$. Then, for each edge $\ell \sim ij \in E$, we have
    \[
    \p{\nabla_{\vs} \phi}_{ij} = - w_{ij} \abs{\ip{\hat\vx(\vs,\vd)}{\ve_{ij}}}^2 = - w_{ij} \abs{\hat{x}_i(\vs,\vd) - \hat{x}_j(\vs,\vd)}^2 = -w_{ij}\Delta_{ij}^2(\vs,\vd),
    \]
    where $\Delta_{ij}(\vs,\vd) = \hat{x}_i(\vs,\vd) - \hat{x}_j(\vs,\vd)$ is the voltage difference across edge $ij \in E$ connecting nodes $i<j$ induced by the demands $\vd$ on the switched graph $G_{\vs} = (N,E,\vw \circ \vs)$. Equivalently, by applying the incidence matrix $\mA$, the vector of voltage differences $\Delta(\vs,\vd) := \mA \hat\vx(\vs,\vd)$.
\end{proof}

\subsection{Remark: Closing all switches is the unconstrained global minimizer}
\begin{remark}
    Since $\mL_{\vs}^\dagger \succeq 0$ for all $\vs$, and and $w_{\ell}\geq 0$ for all $\ell$, it follows that the symmetric matrix product  $\mL_{\vs}^\dagger \mE_{\ell} \mL_{\vs}^\dagger \succeq 0$. Consequently, for all $\vd \perp \vone$, we have 
    \[
    \nabla_{\vs} \phi(\vs) \leq \vzero \quad \textsf{for all} \quad \vs \in \R^m_+;
    \]
    and thus, 
    \[
    \phi(\vone,\vd) \leq \phi(\vs) \quad \textsf{for all } \vs \in \s{0,1}^m \setminus\c{\vone},
    \]
    i.e., the all-ones vector~$\vone$ is the (unconstrained) global minimizer of the random congestion objective function $\phi(\cdot,\vd)$ for any fixed $\vd \perp \vone$:
    \[
    \min_{\vs \in \s{0,1}^m} \ \phi(\vs) = \phi(\vone,\vd) \quad \textsf{for all} \quad \vd \perp \vone.
    \]
\end{remark}

\subsection{Proof of Lemma \ref{lemma:voltage_cauchy_schwarz}}
\ifapdxthms
\begin{lemma}[Cauchy–Schwarz for voltage differences]
        \label{lemma:voltage_cauchy_schwarz}
        Let $\vd \in \R^n$ be a vector of demands, and let $ \vx = \mL_{\vs}^\dagger \vd$ be the voltages induced by the demands under switching strategy $\vs \in \s{0,1}^m$. Then, for any edge $ij \in E$ and switching strategy $\vs$, we have
        \begin{equation}
            \label{eq:voltage_cauchy_schwarz}
            \Delta_{ij}^2 = \abs{\va_{ij}^\T \vx}^2  \leq \rho_{ij} \cdot \phi(\vs) \qquad \text{for all } (i,j) \in E.
        \end{equation}
    \end{lemma}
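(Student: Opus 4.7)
The plan is to recognize the claim as a direct Cauchy--Schwarz inequality in the semi--inner product induced by the pseudoinverse $\mL_{\vs}^\dagger$. Since $\mL_{\vs}$ is symmetric PSD, its Moore--Penrose pseudoinverse $\mL_{\vs}^\dagger$ is also symmetric PSD, and thus $\langle \vu,\vv\rangle_{\mL_{\vs}^\dagger} := \vu^\T \mL_{\vs}^\dagger \vv$ defines a (possibly degenerate) positive semidefinite bilinear form on $\R^n$, which obeys the Cauchy--Schwarz inequality.

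The key step is to rewrite the left-hand side so that the demand vector $\vd$ reappears through the identity $\vx = \mL_{\vs}^\dagger \vd$. Concretely, substitute to obtain $\va_{ij}^\T \vx = \va_{ij}^\T \mL_{\vs}^\dagger \vd = \langle \va_{ij},\vd\rangle_{\mL_{\vs}^\dagger}$. Squaring and applying Cauchy--Schwarz in the $\mL_{\vs}^\dagger$ semi--inner product yields
\[
    \bigl|\va_{ij}^\T \mL_{\vs}^\dagger \vd\bigr|^2 \;\leq\; \bigl(\va_{ij}^\T \mL_{\vs}^\dagger \va_{ij}\bigr)\bigl(\vd^\T \mL_{\vs}^\dagger \vd\bigr) \;=\; \rho_{ij}(\vs)\,\phi(\vs),
\]
where the final equality uses the definitions of $\rho_{ij}$ in Def.~\ref{def:eff_res_and_leverage} (noting that $\va_{ij} = \ve_i - \ve_j = \ve_{ij}$) and of $\phi$ in~\eqref{eq:relaxed_cong}.

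There is no serious obstacle: since $\vd\perp \vone$ and $\va_{ij}\perp\vone$, both vectors lie in the range of $\mL_{\vs}^\dagger$ when the switched graph is connected, so every quantity above is well defined; when the graph is disconnected, the bound holds as a purely algebraic inequality because Cauchy--Schwarz applies to any PSD form. The only mild care required is to verify the PSD property of $\mL_{\vs}^\dagger$, which follows from the spectral decomposition of $\mL_{\vs}\succeq 0$.
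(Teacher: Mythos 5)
Your proof is correct and takes essentially the same route as the paper: the paper writes $\va_{ij}^\T \mL_{\vs}^\dagger \vd = \langle \mL_{\vs}^{\dagger/2}\va_{ij},\, \mL_{\vs}^{\dagger/2}\vd\rangle$ and applies the standard Cauchy--Schwarz inequality, whereas you invoke Cauchy--Schwarz directly for the PSD bilinear form $\langle\cdot,\cdot\rangle_{\mL_{\vs}^\dagger}$; these are the same argument stated at different levels of abstraction. Your extra remark on well-definedness in the disconnected case is harmless but not needed, since the inequality is purely algebraic for any PSD matrix.
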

\fi
\begin{proof}[Proof of Lemma \ref{lemma:voltage_cauchy_schwarz}]
    For every edge $ij$, 
    \begin{align*}
        \abs{\va_{ij}^\T \vx}^2 &= \abs{\va_{\ell}^\T \mL_{\vs}^\dagger \vd}^2\\
        &= \abs{\ip{\mL_{\vs}^{\dagger/2} \va_\ell}{\mL_{\vs}^{\dagger/2} \vd}}^2 \tag{$\mL_{\vs}^\dagger \succeq 0$}\\
        % &\leq \underbrace{\norm{\mL_{\vs}^{\dagger/2} \va_\ell}_2^2}_{:=\rho_\ell} \underbrace{\norm{\mL_{\vs}^{\dagger/2} \vd}_2^2}_{:= \phi(\vs)} \qquad \tag{Cauchy-Schwarz}\\
        &\leq \norm{\mL_{\vs}^{\dagger/2} \va_\ell}_2^2 \norm{\mL_{\vs}^{\dagger/2} \vd}_2^2 \qquad \tag{Cauchy-Schwarz}\\
        % &= \rho_\ell \cdot \norm{\mL_{\vs}^{\dagger/2} \vd}_2^2\\
        &= \rho_\ell \cdot \phi(\vs).  \tag{definition of $\phi$ and $\rho$}
        % &= \rho_{ij} \cdot \phi(\vs),   
    \end{align*}
    This completes the proof.
\end{proof}

\subsection{Proof of Lemma \ref{lemma:hessian_operator_norm_bound}}
\label{sec:lemma:hessian_operator_norm_bound}
\ifapdxthms
 \begin{lemma}[Operator norm bound on the Hessian]
    \label{lemma:hessian_operator_norm_bound}
    Given a switching strategy set $\setS \subseteq \s{0,1}^m$ and demands~$\vd$, let $\mH(\cdot) : \setS \to \S^m_+$ be the Hessian of the congestion function $\phi(\cdot)$. The operator norm of the Hessian $\mH(\cdot)$ is bounded as follows:
    \begin{align*}
    \opnorm{\mH} &\leq L := 2\max_{\vs \in \setS} \phi(\vs) =  \frac{2 \norm{\vd}_2^2}{\underset{\vs \in \setS}{\inf}\lambda_2(\mL_{\vs})} \leq \frac{2 \norm{\vd}_2^2}{\lambda_2\p{\mL_{\vs_0}}}.
    \end{align*}
    Moreover, if $\norm{\vd}_2 \leq 1$ always, and the template is such that $\lambda_2(\mL_{\vs_0}) \geq 1$, then it holds that
    $ \opnorm{\mH} \leq 2$. 
    
    % Additionally, if $\vd$ is a random vector satisfying Assumption \ref{assum:random_demands}, we have the following operator norm bound:
    % \[
    % \Expec[\vd]{\opnorm{\mH(\vs)}} \leq \frac{2(n-1)}{\lambda_2(\mL_{\vs})},
    % \]
    % where $\lambda_2(\mL_{\vs})$ is the second smallest eigenvalue of the switched Laplacian $\mL_{\vs}$.
\end{lemma}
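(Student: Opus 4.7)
The plan is to differentiate the first-order formula from Lemma~\ref{lem:switching_gradient} once more and then convert the resulting quadratic form into an operator-norm bound via a Rayleigh-quotient argument. Applying $\partial_{s_f}\mL_{\vs}^\pinv = -w_f\mL_{\vs}^\pinv \mE_f \mL_{\vs}^\pinv$ (valid while connectivity is preserved, so that the nullspace of $\mL_{\vs}$ stays fixed as $\Span(\vone)$) to the gradient entry $\partial_{s_e}\phi(\vs) = -w_e \vd^\T \mL_{\vs}^\pinv \mE_e \mL_{\vs}^\pinv \vd$ from Lemma~\ref{lem:switching_gradient}, I would obtain the closed form
\[
\mH_{ef}(\vs) \;=\; 2\, w_e w_f\, \Delta_e \Delta_f\, \rho_{ef}(\vs),\qquad \rho_{ef}(\vs) := \va_e^\T \mL_{\vs}^\pinv \va_f,
\]
and rewrite it in factored form as $\vu^\T \mH(\vs) \vu = 2\,\vy(\vu)^\T \mL_{\vs}^\pinv \vy(\vu)$, with $\vy(\vu) := \mL_{\vu}\vx$, $\mL_{\vu} := \sum_e u_e w_e \mE_e$, and $\vx := \mL_{\vs}^\pinv \vd$. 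This representation makes $\mH \succeq \vzero$ manifest.

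The main step is then a Rayleigh bound. Since each $\mE_e$ annihilates $\vone$, we have $\vy(\vu) \perp \vone$ for every $\vu$, so restricting $\mL_{\vs}^\pinv$ to $\Span(\vone)^\perp$ yields $\vy^\T \mL_{\vs}^\pinv \vy \le \|\vy\|_2^2/\lambda_2(\mL_{\vs})$. I would then control $\|\vy(\vu)\|_2$ for a unit vector $\vu$ by expanding $\vy(\vu) = \sum_e u_e w_e \Delta_e \va_e$ and invoking the voltage Cauchy--Schwarz bound of Lemma~\ref{lemma:voltage_cauchy_schwarz} ($\Delta_e^2 \le \rho_e\,\phi(\vs)$) together with the Laplacian identity $\phi(\vs) = \vx^\T \mL_{\vs}\vx = \sum_e w_e s_e \Delta_e^2$. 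Aligning these factors should yield the sharp pointwise estimate $\vu^\T \mH(\vs)\vu \le 2\,\phi(\vs)$ on the Euclidean unit sphere of $\R^m$.

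With that pointwise bound in hand, the remainder of the chain follows routinely: $\phi(\vs)=\vd^\T\mL_{\vs}^\pinv\vd \le \|\vd\|_2^2/\lambda_2(\mL_{\vs})$ from the Rayleigh bound applied to $\mL_{\vs}^\pinv$ on $\Span(\vone)^\perp$ (using $\vd\perp\vone$); then taking the supremum over $\vs\in\setS$; and finally invoking Assumption~\ref{assum:backbone} together with PSD monotonicity of $\mL_{\vs}$ under edge addition, so that $\mL_{\vs}\succeq \mL_{\vs_0}$ and hence $\inf_{\vs\in\setS}\lambda_2(\mL_{\vs}) \ge \lambda_2(\mL_{\vs_0})$. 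Plugging in $\|\vd\|_2\le 1$ and $\lambda_2(\mL_{\vs_0})\ge 1$ gives the closing statement $\opnorm{\mH} \le 2$. The main obstacle is the middle step: extracting the clean constant in $\opnorm{\mH(\vs)}\le 2\phi(\vs)$ requires exploiting the factorization $\mH = 2\mM^\T \mL_{\vs}^\pinv \mM$ with $\mM := \mA^\T \diag(\vw \circ \Delta)$ and matching it against $\phi(\vs)=\vx^\T\mL_{\vs}\vx$ directly, rather than invoking a coarse triangle inequality on $\opnorm{\mL_{\vu}}$, which would leak extraneous factors of $\|\vw\|_\infty$ or $\opnorm{\mA}^2$.
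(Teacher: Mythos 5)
Your instinct to flag the middle step as ``the main obstacle'' is correct: the gap is real and, as written, neither your sketch nor the paper's own proof closes it. You identify the same factorization the paper uses (your $\mH=2\mM^\T\mL_{\vs}^\dagger\mM$ with $\mM=\mA^\T\diag(\vw\circ\Delta)$ equals the paper's $2\diag(\vzeta)\mP\diag(\vzeta)$ with $\vzeta=\sqrt{\vw}\circ\Delta$ and $\mP=\mW^{1/2}\mA\mL_{\vs}^\dagger\mA^\T\mW^{1/2}$), and you correctly anticipate that a coarse triangle inequality on $\|\vy(\vu)\|_2$ leaks factors of $\opnorm{\mA}^2$ and $\|\vw\|_\infty$. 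But ``matching against $\phi(\vs)=\vx^\T\mL_{\vs}\vx$ directly'' is a target, not an inequality: the estimate $\opnorm{\mH(\vs)}\le 2\,\phi(\vs)$ is asserted rather than derived. The paper fills this hole by declaring $\opnorm{\mP}=1$ (treating $\mP$ as an orthogonal projector) and invoking the leverage bound $w_e\rho_e\le 1$; both are true at $\vs=\vone$ but fail for general $\vs\in[0,1]^m$. Indeed $\mP^2=\mW^{1/2}\mA\,\mL_{\vs}^\dagger\mL_{\vone}\mL_{\vs}^\dagger\mA^\T\mW^{1/2}\neq\mP$ unless $\vs=\vone$, and the correct leverage bound in the switched graph is $s_e w_e\rho_e\le 1$, not $w_e\rho_e\le 1$, so $w_e\rho_e$ is unbounded as $s_e\to 0$.

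A small example shows the advertised constant fails, and hence that the step you hesitated over cannot be completed as stated. On the triangle $\{1,2,3\}$ with unit weights, backbone $T=\{(1,2),(2,3)\}$, $\vs=(1,1,0)$, and $\vd=(1,0,-1)$, one gets $\vx=\mL_{\vs}^\dagger\vd=(1,0,-1)$ and $\phi(\vs)=2$, while for the off edge $e=(1,3)$ one has $\Delta_e=2$ and $\rho_e=2$, so $w_e\rho_e=2>1$ and the diagonal Hessian entry is $\mH_{ee}=2w_e^2\Delta_e^2\rho_e=16>4=2\,\phi(\vs)$ (in fact $\opnorm{\mH}=18$). Any repair must carry the factor $1/s_e$ through the bound or bound the switching probabilities away from zero, and the constants $L$, $D$ feeding into Theorem~\ref{thm:FW-rounded-output-convergence} would then need to be revisited. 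So do not accept $L=2\max_{\vs\in\setS}\phi(\vs)$ on the strength of the factorization alone; the obstacle you named is the substance of this lemma, and as things stand it is not an obstacle you can route around by a one-line appeal to the projector property or Foster-type leverage bounds.
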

\fi
\begin{proof}[Proof of Lemma \ref{lemma:hessian_operator_norm_bound}]
        Fix demands $\vd$, and set $\vx := \mL^\dagger(\vs)\vd$, $\Delta := \mA \vx$. Recall that $\frac{\partial\phi}{\partial s_\ell}(\vs,\vd) = -w_\ell (\va_\ell^\T \vx)^2 = -w_\ell \Delta_\ell^2$. By direct calculation, we obtain the entries of the Hessian of the congestion function $\phi(\vs)$ with respect to the switching variables $\vs$ as follows:
       \begin{subequations}
        \label{eq:hessian_congestion_elementwise}
        \begin{align}
        \s{\mH}_{\ell,\ell'} = \frac{\partial^2 \phi}{\partial s_\ell \partial s_{\ell'}}(\vs,\vd) &= 
        \begin{cases}
            2w_\ell^2 \p{\va_\ell^\T \vx}^2 \p{\va_{\ell}^\T\mL^\dagger\va_\ell}, & \text{if } \ell = \ell'\\
            2w_\ell w_{\ell'} \p{\va_\ell^\T \vx} \p{\va_{\ell'}^\T \vx} \p{\va_\ell^\T\mL^\dagger\va_{\ell'}} & \text{if } \ell \neq \ell',\\
        \end{cases}\\
        &=\begin{cases}
            2\Delta_\ell^2 w_\ell^2 \rho_\ell & \text{if } \ell = \ell'\\
            2\Delta_\ell \Delta_{\ell'} w_\ell w_{\ell'} \rho_{\ell'} & \text{if } \ell \neq \ell'.
        \end{cases}
        \end{align}
       \end{subequations}
       In particular, the diagonal entries of the Hessian can be written as, for each edge $e \in E$, 
       \[
       \s{\mH}_{e,e} = -2 \underbrace{w_{e} \rho_{e}}_{=\ell_{e}(\vs)}  \p{\nabla \phi}_{e} = -2 \ell_e \p{\nabla \phi}_e.
       \]
        Define the projection matrix  as $\mP := \mW^{1/2}\mA \mL^\dagger \mA^\T \mW^{1/2}$. The matrix $\mP$ is the orthogonal projector onto $\Range\p{\mW^{1/2} \mA}$. It is known (cf. \cite{spielman_graph_2011}) that the maximum eigenvalues $\opnorm{\mP} = 1$ with multiplicity $n-1$. Define a vector $\vzeta \in \R^m$ of scaled voltage perturbations
        \[
       \zeta_{e} := \sqrt{w_{e}} \va_{e}^\T \vx := \sqrt{w_{e}} \Delta_{e}, \quad e \sim 1,\ldots,m.
        \]
        Then, the Hessian can be expressed as follows:
        \[
        \mH(\vs) = 2\diag(\vzeta) \mP \diag(\vzeta).
        \]
         As a consequence of Lemma \ref{lemma:voltage_cauchy_schwarz},
        \[
        w_{\ell} \Delta_\ell^2 \leq w_\ell \rho_\ell \cdot \phi(\vs) \leq \phi(\vs), \quad \ell \sim 1,\ldots,m,
        \]
        which holds because Rayleigh's monotonicity principle gives us $w_\ell \rho_\ell \leq 1$. Consequently, we have
        \[
        \opnorm{\diag(\vzeta)}^2 = \max_\ell w_{\ell} \Delta_\ell^2 \leq \phi(\vs).
        \]

        Therefore, the operator norm of the Hessian is bounded as
        \begin{align*}
           \sup_{\vs \in \setS}\ \opnorm{\mH(\vs)} &=  \sup_{\vs \in \setS}\ \opnorm{2\diag(\vzeta) \mP \diag(\vzeta)}\\
           &\overset{(1)}{\leq} 2 \cdot \sup_{\vs \in \setS}\ \opnorm{\diag(\vzeta)}^2 \cdot \opnorm{\mP} \\
           &\overset{(2)}{\leq} 2 \cdot \sup_{\vs \in \setS}\ \phi(\vs)\\
           &=2\sup_{\vs \in \setS} \norm{\vd}_{\mL_{\vs}}^2\\
           &\overset{(3)}{\leq} \frac{2 \norm{\vd}_2^2}{\inf_{\vs \in \setS} \lambda_2(\mL_{\vs})}\\
           &\overset{(4)}{\leq}  \frac{2 \norm{\vd}_2^2}{\lambda_2(\mL_{\vs_0})}=:L,
        \end{align*}
        where step (1) follows from the fact that $\opnorm{\mP} = 1$ with multiplicity $n-1$, step (2) follows from Lemma \ref{lemma:voltage_cauchy_schwarz}, and step (3) follows from the fact that $\phi(\vs) \leq \frac{1}{\lambda_2(\mL_{\vs})} \norm{\vd}_2^2$. Lastly, step (4) is due to the fact that $1/\lambda_2(\mL_{\vs})$ is montonically increasing as $\vs \to \vzero$; hence, the graph corresponding to~$\vs_0$; that is, the graph with the least number of edges in~$\setS$, attains the maximal value of~$1/\lambda_2(\cdot).$  Finally, under certain connectivity assumptions, we can maintain $\lambda_2 \geq 1$. %as follows from \cite{kirkland_algebraic_2015}. 
        Further, by assumption that $\norm{\vd}_2^2 \leq 1$, we have that $L=2$.
    \end{proof}

    \subsection{Proof of Lemma \ref{lemma:homogeneity}}
    \label{apdx:lemma:homogeneity}
    \begin{proof}[Proof of Lemma \ref{lemma:homogeneity}]
         Reminiscient of \cite{ghosh_minimizing_2008}, note that the congestion $\phi(\cdot)$ can be rewritten as:
    \[
    \phi(\vs) = \vd^\T \mL_{\vs}^+ \vd =\vx(\vs)^\T\mL_{\vs} \vx(\vs) = \sum_{e \in E} s_e w_e \abs{\ip{\va_e}{\vx(\vs)}}^2 = -\sum_{e \in E} s_e \cdot \frac{\partial }{\partial s_e}\phi(\vs) =  -\ip{\nabla \phi(\vs)}{\vs}
    \]
    for any $\vs \in \setD_T$. Thus, we have shown that $\phi$ is a homogeneous function of degree $-1$, as desired. What remains is to show convexity. As remarked by \cite{ghosh_minimizing_2008}, a highly related result, shown in \cite{shannon_concavity_1956}, is that the effective resistance between two electrical points $i$ and $j$:
    \[
    \rho_{ij}(\vs) = \p{\ve_i -  \ve_j}^\T \mL_{\vs}^\dagger \p{\ve_i - \ve_j}, 
    \]
    is a concave function in $r_{ij} := 1/(s_{ij}w_{ij})$ for $w_{ij},s_{ij}\geq0$, but this is not sufficient to imply convexity in $\phi$; moreover, the technique of \cite{ghosh_minimizing_2008} does not yield a straightforward path. We will take a different path to show that $\phi$ is \textit{convex} in $\vs$,  but not strictly so.
    
    % Instead, we need to apply the definition of convexity. For $\vs,\vshat \in \s{0,1}^m$, with $\vs \neq \vshat$, and any $\theta \in \s{0,1}$, it suffices to show that
    % \[
    % \phi\p{\theta \vs + \p{1-\theta}\vshat} \leq \theta\phi\p{\vs} + \p{1-\theta}\phi\p{\vshat}.
    % \]
    
    To establish this, recall that~$\trace \mX^{-1}$ is a convex function over positive semi-definite matrices~$\mX= \mX^\T \succeq \vzero$. Then, recall that the pseudoinverse of the switched Laplacian operator is
    \[
    \mL_{\vs}^\dagger = \p{\mL_{\vs} + \vone\vone^\T/n}^{-1} = \p{\mA^\T \mW^{1/2} \mS \mW^{1/2} \mA + \vone \vone^\T /n}^{-1}.
    \]
    Thus, because the affine mapping~$\vs \mapsto \mL_{\vs} + \vone\vone^\T/n$ is one-to-one, the function~$\trace\p{\mL_{\vs}^\dagger}$ is convex over~$\vs \in \s{0,1}^m$. Consequently, we can show that the congestion
    \[
     \phi(\vs) = \vd^\T \mL_{\vs}^\dagger \vd = \trace\p{\vd \vd^\T \mL_{\vs}^\dagger} = \trace\p{\vd\vd^\T\p{\mL_{\vs} + \vone\vone^\T/n}^{-1}}
    \]
    is convex in $\vs$ for $\vd\perp \vone$. 
    
    First, when $\mM = \vd\vd^\T$, we have the variational identity
    \begin{equation}
    \label{eq:variational_congestion}
    \phi(\vs) = \vd^\T \mL_{\vs}^\dagger\vd = \sup_{\vz \in \R^n}\; 2 \vd^\T \vz - \vz^\T \mL_{\vs} \vz,
    \end{equation}
    which holds because, by the non-negativity of the norm, 
    $$
    \vz^\T \mL_{\vs} \vz - 2 \vd^\T \vz + \vd^\T \mL_{\vs}^\dagger \vd = \p{\vz - \mL_{\vs}^\dagger \vd}^\T \mL_{\vs} \p{\vz  - \mL_{\vs}^\dagger \vd} = \norm{\vz - \mL_{\vs}^\dagger \vd}_{\mL_{\vs}} \geq 0,
    $$
    with equality at $\vz = \mL_{\vs}^\dagger \vd$, which is the unique maximizer of \eqref{eq:variational_congestion}.
    Since for each fixed $\vz$, the mapping $\mL_{\vs} \mapsto - \vz^\T \mL_{\vs} \vz$ is affine in $\mL_{\vs}$, and indeed, affine in $\vs$, we have that $\phi(\vs)$ is a pointwise supremum of a family of affine functions of $\vs$, and hence, convex.
    \end{proof}

    \begin{remark}
        A stronger statement can be proved\textemdash the function $\trace\p{\mM \mL_{\vs}^\dagger}$ is strictly convex over switching probabilities $\vs \in \s{0,1}^m$ if and only if $\mM \succ \vzero$. 
    \end{remark}

    \subsection{Proof of Lemma \ref{lemma:approximate_gradient}}
    \label{apdx:approx:grad}
    \begin{proof}[Proof of Lemma \ref{lemma:approximate_gradient}]
    A single call to $\mathsf{Solve}(\mL_{\vs},\vd,\delta,\varepsilon)$ yields $\hat{\vx}$ in at most~$O(m\log^c n\,\log(1/\varepsilon))$ time; by Lemma~\ref{lem:switching_gradient}, each gradient entry is $-w_e(\va_e^\top\hat{\vx})^2$, so one linear pass over edges suffices.
    
    Set $\vx=\mL_{\vs}^\dagger\vd$ and $\vz:=\hat{\vx}-\vx$. The solver guarantee gives, with probability $\ge 1-O(\delta)$,
    $\|\vz\|_{\mL_{\vs}}\le \varepsilon\|\vx\|_{\mL_{\vs}}$.
    Let~$\Delta:=\mA\vx$, $\widehat{\Delta}:=\mA\hat{\vx}$, and $\delta\Delta:=\mA\vz$.
    Define the vectors
    \[
    \vzeta := \sqrt{\vw}\odot \Delta, \qquad \beta := \sqrt{w}\odot \delta\Delta,
    \]
    so that, by Lemma~\ref{lem:switching_gradient},
    \(
    \nabla\phi(\vs) = -\,\vzeta\odot\vzeta
    \)
    and
    \(
    \widehat{\nabla}\phi(\vs) = -\,(\vzeta+\vbeta)\odot(\vzeta+\vbeta).
    \)
    Hence the error is
    \[
    h := \widehat{\nabla}\phi(\vs)-\nabla\phi(\vs) \;=\; -\,2\,\vzeta\odot\vbeta \;-\; \vbeta\odot\vbeta.
    \]
    
    From the Hessian factorization proved in Section \ref{sec:lemma:hessian_operator_norm_bound},
    \[
    \nabla^2\phi(\vs)\;=\;2\,\diag(\vzeta)\,\mP\,\diag(\vzeta),
    \quad
    \mP := \mW^{1/2}\mA\,\mL_{\vs}^\dagger \mA^\top \mW^{1/2},
    \]
    where $\mW=\diag(\vw)$ and $\mP$ is an orthogonal projector on $\Span(\mW^{1/2}\mA)$ with $\|\mP\|_{\mathrm{op}}=1$.
    Therefore, for any $\vu\in\R^m$,
    \[
    \|\vu\|_{\nabla^2\phi(\vs)^\dagger}
    =\tfrac{1}{\sqrt{2}}\;\big\|\mP^{\dagger/2}\,\diag(\vzeta)^{-1}\vu\big\|_2 .
    \]
    Applying this to $u=h$ and using the triangle inequality,
    \[
    \|\vh\|_{\nabla^2\phi(\vs)^\dagger}
    \le \tfrac{1}{\sqrt{2}}\Big( 2\,\|\mP^{\dagger/2}\vbeta\|_2
    +\big\|\mP^{\dagger/2}\,\diag(\vzeta)^{-1}(\vbeta\odot\vbeta)\big\|_2 \Big).
    \]
    Because $\vbeta=\mW^{1/2}\mA\vz$ lies in the range of $\mP$ (so $\mP^\dagger=\mP$ on that range),
    \[
    \|\mP^{\dagger/2}\vbeta\|_2^2 \;=\; \vbeta^\top \mP^\dagger \vbeta \;=\; \vz^\top \mL_{\vs}\vz \;=\; \|\vz\|_{\mL_{\vs}}^2.
    \]
    For the quadratic term, use
    \(
    \|\mP^{\dagger/2}\diag(\vzeta)^{-1}(\vbeta\odot\vbeta)\|_2
    \le \|\mP^{\dagger/2}\vbeta\|_2 \cdot \max_e |\beta_e|/|\zeta_e|.
    \)
    By Lemma~3 (Cauchy–Schwarz for voltage differences),
    $|\delta\Delta_e|\le \sqrt{\varrho_e}\,\|\vz\|_{\mL_{\vs}}$ and $|\Delta_e|\le \sqrt{\varrho_e}\,\|\vx\|_{\mL_{\vs}}$,
    so whenever $\zeta_e\neq 0$,
    \[
    \frac{|\beta_e|}{|\zeta_e|}=\frac{\sqrt{w_e}\,|\delta\Delta_e|}{\sqrt{w_e}\,|\Delta_e|}\le \frac{\|\vz\|_{\mL_{\vs}}}{\|\vx\|_{\mL_{\vs}}}.
    \]
    Thus,
    \[
    \|\vh\|_{\nabla^2\phi(\vs)^\dagger}
    \le \Big(\sqrt{2}\,\frac{\|\vz\|_{\mL_{\vs}}}{\|\vx\|_{\mL_{\vs}}}
    +\tfrac{1}{\sqrt{2}}\,\frac{\|\vz\|_{\mL_{\vs}}^2}{\|\vx\|_{\mL_{\vs}}^2}\Big)\,\|\vx\|_{\mL_{\vs}}.
    \]
    Finally, observe that
    \(
    \|\nabla\phi(\vs)\|_{\nabla^2\phi(\vs)^\dagger}
    =\tfrac{1}{\sqrt{2}}\|\mP^{\dagger/2}\vzeta\|_2
    =\tfrac{1}{\sqrt{2}}\|\vx\|_{\mL_{\vs}}
    \)
    (since $\vzeta=\mW^{1/2}\mA\vx$).
    With $\|\vz\|_{\mL_{\vs}}\le \varepsilon\|\vx\|_{\mL_{\vs}}$ this yields
    \[
    \|h\|_{\nabla^2\phi(\vs)^\dagger}
    \le (2\varepsilon+\varepsilon^2)\,\|\nabla\phi(\vs)\|_{\nabla^2\phi(\vs)^\dagger}
    \ \le\ 3\varepsilon\,\|\nabla\phi(\vs)\|_{\nabla^2\phi(\vs)^\dagger}
    \]
    for $\varepsilon\in(0,1]$, completing the proof.
    \end{proof}

\subsection{Proof of Theorem \ref{thm:backbone-gsc}}
    \label{apdx:proof:backbone-gsc}
    % \begin{theorem}[Generalized self\nobreakdash--concordance]
    % \label{thm:backbone-gsc}
    % %Let $T\subseteq E$ be a fixed spanning tree.  
    % Let $T \subseteq E$. For $\vs \in\mathcal{D}_T$ define the relaxed congestion $\phi(\vs)=\vd^\T \mL_{\vs}^\dagger \vd$.  Let $\vrho_T := \diag(\mA \mL_T^\dagger \mA^\T)\in\R^m$ denote the vector of effective resistances in the tree $T$, i.e., $\vrho_T(e)=\va_e^{\top}\mL_T^{\dagger}\va_e$ where $\mL_T=\mA^{\top}\diag(\vw\odot \vone_T)\mA$ is the tree Laplacian and $\mL_T^\dagger$ its pseudoinverse.  Then $\phi$ is $(M,\nu)$--generalized self\nobreakdash--concordant on $\mathcal{D}_T$ with
    % \begin{equation}
    %     \nu=2\quad\text{and}\quad M\;=\;3\,\norm{\vw\odot \vrho_T}_2.
    % \end{equation}
    % That is, for all $s\in\mathcal{D}_T$ and all directions $u,w\in\R^m$ one has
    % \begin{equation}
    %     \bigl|D^3\phi(s)[w, u, u]\bigr|\;\le\;3\,\bigl\|\vw\odot \vrho_T\bigr\|_2\,\|w\|_2\,\|u\|_{\nabla^2 \phi(s)}^2.
    % \end{equation}
    % In particular, the generalized self\nobreakdash--concordance constant $M$ is finite and independent of $\vs$ on the entire feasible domain $\mathcal{D}_T$.
    % \end{theorem}
    \begin{proof}[Proof of Theorem \ref{thm:backbone-gsc}]
    Our proof follows the general pattern of~\cite{sun2019generalized} and relies on explicit formulas for the derivatives of $\phi$.  First, recall the definition of a $(M,\nu)$--generalized self\nobreakdash--concordant function from~\cite[Definition~1.3]{carderera2024scalable}: a closed convex function $f\in C^3(\mathrm{dom}(f))$ with open domain is $(M,\nu)$--generalized self\nobreakdash--concordant if for all $x$ and all directions $u,w$,
    \begin{equation*}
        \bigl|\langle D^3 f(x)[w]u, u\rangle\bigr| \;\le\; M\,\|u\|_{\nabla^2 f(x)}^2\;\|w\|_2^{\nu-2}\,\|w\|_2^{3-\nu},
    \end{equation*}
    where $D^3 f(x)[w]$ is the third Fr\'echet derivative of $f$ and $\|v\|_{\nabla^2 f(x)}^2=v^{\top}\nabla^2 f(x)\,v$ is the local norm.  Setting $\nu=2$ yields the bound $M\,\|u\|_{\nabla^2 f(x)}^2\,\|w\|_2$.
    
    We now derive expressions for the derivatives of $\phi$.  Define $x=L(s)^{-1}d\in\R^n$, which satisfies $L(s)x=d$ and $x\perp\mathbf{1}$.  Let $V(h)=A^{\top}\operatorname{diag}(\vw\odot h)A$ for $h\in\R^m$ so that $D L(s)[h]=V(h)$.  Differentiating $x$ gives $D x[s][h]= -L(s)^{-1} V(h) x$.  A standard computation using these identities (see, e.g., our earlier derivation or~\cite[Section~1]{carderera2024scalable}) yields the first three directional derivatives of $\phi$:
    \begin{align*}
        D\phi[s][h] &= -\,x^{\top} V(h)\,x,\\
        D^2\phi[s][u,v] &= 2\,x^{\top} V(u)\,L(s)^{-1} V(v)\,x,\\
        D^3\phi[s][w,u,u] &= -6\,x^{\top} V(u)\,L(s)^{-1} V(w)\,L(s)^{-1} V(u)\,x.
    \end{align*}
    Since $V(h)$ is symmetric, the second derivative can be written more compactly as $\nabla^2\phi(s)=2\,\operatorname{diag}(\zeta)\,P\,\operatorname{diag}(\zeta)$, where $\zeta_e=\sqrt{w_e}\,a_e^{\top}x$ and $P = \omega^{1/2}A\,L(s)^{-1}A^{\top}\omega^{1/2}$ is an orthogonal projector (its square equals itself) as described in~\cite{carderera2024scalable}.  Therefore $\|u\|_{\nabla^2\phi(s)}^2=2\,(\operatorname{diag}(\zeta)u)^{\top} P\,\operatorname{diag}(\zeta)u\ge 0$.
    
    To bound the third derivative, we whiten the expression.  Set $y=L(s)^{-1/2} V(u) x$ and $B=L(s)^{-1/2} V(w)\,L(s)^{-1/2}$.  The matrix $B$ is symmetric positive semidefinite.  With this notation,
    \begin{equation*}
        D^3\phi[s][w,u,u] \;=\;-6\,y^{\top}B\,y.
    \end{equation*}
    Since $\|y\|_2^2 = y^{\top} y = x^{\top} V(u)\,L(s)^{-1} V(u)\,x = \tfrac{1}{2}\,u^{\top}\nabla^2\phi(s)\,u$, we obtain
    \begin{equation}
        \bigl|D^3\phi[s][w,u,u]\bigr| \;\le\; 6\,\|B\|_{\mathrm{op}}\,\|y\|_2^2 \;=\;3\,\|B\|_{\mathrm{op}}\,\|u\|_{\nabla^2\phi(s)}^2.
        \label{eq:third-phi-bound}
    \end{equation}
    It remains to bound the operator norm $\|B\|_{\mathrm{op}}$ uniformly over $s\in\mathcal{D}_T$.  For any vector $y\perp\mathbf{1}$ we have the Rayleigh quotient
    \begin{equation*}
        \frac{y^{\top} V(w)\,y}{y^{\top} L(s)\,y} \;=\; \frac{\sum_{e=1}^m |w_e|\,w_e\,(a_e^{\top}y)^2}{y^{\top}L(s)\,y}.
    \end{equation*}
    Because $s_e=1$ for all $e\in T$, one has $L(s)\succeq L_T$ uniformly on $\mathcal{D}_T$.  For every edge $e\in E$, Cauchy--Schwarz in the $L_T$\nobreakdash--energy inner product yields
    \begin{equation*}
        (a_e^{\top}y)^2 \;=\; \langle L_T^{\dagger}a_e,\,y\rangle_{L_T}^2 \;\le\; \|L_T^{\dagger}a_e\|_{L_T}^2 \,\|y\|_{L_T}^2 \;=\; \rho_T(e)\,\bigl(y^{\top}L_T\,y\bigr) \;\le\; \rho_T(e)\,y^{\top}L(s)\,y.
    \end{equation*}
    Summing over $e$ leads to
    \begin{equation*}
        y^{\top} V(w)\,y \;\le\; \Bigl(\sum_{e=1}^m |w_e|\,w_e\,\rho_T(e)\Bigr) y^{\top} L(s)\,y \;\le\; \bigl\|\vw\odot \vrho_T\bigr\|_2 \cdot \|w\|_2\;y^{\top} L(s)\,y.
    \end{equation*}
    Taking the supremum over all $y\perp\mathbf{1}$ gives
    \begin{equation*}
        \bigl\|L(s)^{-1/2} V(w)\,L(s)^{-1/2}\bigr\|_{\mathrm{op}} \;\le\; \bigl\|\vw\odot \vrho_T\bigr\|_2\;\|w\|_2
    \end{equation*}
    uniformly for all $s\in\mathcal{D}_T$.  Substituting this bound into~\eqref{eq:third-phi-bound} yields
    \begin{equation*}
        \bigl|D^3\phi[s][w,u,u]\bigr| \;\le\; 3\,\bigl\|\vw\odot \vrho_T\bigr\|_2\,\|w\|_2\,\|u\|_{\nabla^2\phi(s)}^2,
    \end{equation*}
    which exactly matches the definition of $(M,\nu)$\nobreakdash--generalized self\nobreakdash--concordance with $\nu=2$ and $M=3\|\vw\odot \vrho_T\|_2$.  This bound is independent of $s$, completing the proof.
    \end{proof}
    
    \begin{remark}
    The constant $M$ depends on the geometry of the fixed spanning tree $T$ through its effective resistances $\rho_T(e)$.  A tree with small stretch yields a smaller $M$ and thus stronger curvature control.  When $\omega\equiv 1$, one has the bound $\|\vw\odot \vrho_T\|_2\le \sqrt{m}\,\max_{e\in E} \rho_T(e)$, though in practice low\nobreakdash--stretch spanning trees can make $\|\omega\odot \vrho_T\|_2$ much smaller.
    % Irrespective of this fact, we always have the bound
    % \[
    % \norm{\vw \circ \vrho_T}_2 \overset{(1)}{=}\norm{\vell_T}_2 \overset{(2)}{\leq} \norm{\vell_T}_1 = \sum_{e \in E} \p{\ell_T}_e \overset{(3)}{\leq} n-1 \leq n,
    % \]
    % where step (1) is by Definition \ref{def:eff_res_and_leverage}, step (2) is by norm equivalence, and step (3) is by Foster's Theorem \cite{foster1949average}.
    \end{remark}

    \begin{remark}
        Let~$\vs_0 \in \c{0,1}^m$ be such that~$\p{\vs_0}_e=1$ for all~$e \in T.$ Observe that
        \[
        M = 3\norm{\vw \circ \vrho_T}_2 \leq 3\norm{\vrho_T \circ \vw \circ \vone_{E\setminus T}}_\infty\norm{\vell_T}_1\leq 3n\cdot\max_{e \in E \setminus T} w_e \va_e^\T\mL_{\vs_0}^\dagger\va_e.
        \]
        where the first inequality is by Holder's inequality, and the second inequality is by Foster's theorem~\cite{foster1949average}. 
    \end{remark}
\subsection{Proof of Theorem \ref{thm:bern_rounding_main}}
\label{apdx:thm:bern_rounding_main}
\begin{proof}[Proof of Theorem \ref{thm:bern_rounding_main}]
For each edge $e$, let
\[
\mY_e := \xi_e w_e \va_e\va_e^\T, \qquad \xi_e \sim \mathsf{Ber}(\bar{s}_e), \ \mathsf{independent},
\]
so that $\mL_{\vstilde} = \sum_{e \in E} \mY_e$, and its expectation is the ``smoothed'' Laplacian
\[
  \Expec[]{\mL_{\tilde{\vs}}} \;=\; \sum_{e\in E} \bar{s}_e \, w_e \, \va_e \va_e^\T =: \mL_{\bar{s}}.
\] 
Since, for all $e$, $\norm{\va_e}_2^2 =2$ and $\mY_e \succeq \vzero$ and has rank one with the only nonzero eigenvalue $2w_e$, we have the following PSD upper bound:
\[
\vzero \preceq \mY_e \preceq 2 w_e \mId_n \preceq 2 w_{\sf max} \mId_n  =: R \mId_n \qquad \forall e \in E.
\]
Now, by connectivity of the backbone $T \subseteq E,$ and because $\bar{s}_e =1$ for all $e \in T$, we have $\mL_{\vsbar} \succeq \mL_{T} \succ 0$. On  $\Span(\vone)^\perp$, denote
\[
\mu_{\sf min} = \lambda_2(\mL_{\vsbar}), \qquad \mu_{\sf max}:=\lambda_{\sf max}(\mL_{\vsbar}).
\]
We will invoke the following lightly modified version of \cite[Corollary 5.2]{Tropp_2011}.
\begin{lemma}[Matrix Chernoff for PSD sums]
\label{lem:psd_chernoff}
Let $\{\mY_i\}_{i=1}^n$ be independent random PSD matrices acting on a fixed $d$-dimensional subspace $U\subseteq\R^n$, with $\Expec[]{\sum_i \mY_i}=\mPi$ and $\opnorm{\mY_i}\le R$ almost surely. Then, for any $\epsilon\in(0,1)$,
% \[
% \Pr\!\s{\opnorm{\sum_{i=1}^n \mY_i - \mPi} \ge \epsilon}
% \;\le\; 2r\;\exp\p{-\frac{\epsilon^2}{3R}}.
% \]
\begin{subequations}
    \begin{align}
        \Pr\s{\sum_{i}\mY_i \succcurlyeq \p{1+\epsilon} \mPi} \leq d \exp\p{-\frac{\epsilon^2 \mu_{\sf max}}{3 R}},\\
        \Pr\s{\sum_{i}\mY_i \preccurlyeq \p{1-\epsilon} \mPi} \leq d \exp\p{-\frac{\epsilon^2 \mu_{\sf  min}}{2 R}}.
    \end{align}
\end{subequations}
\end{lemma}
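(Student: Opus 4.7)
My plan is to derive this lemma as a direct specialization of Tropp's matrix Chernoff inequality, already cited as~\cite[Corollary~5.2]{Tropp_2011}. The statements involving PSD ordering reduce first to extremal eigenvalue events: since $\sum_i \mY_i \succcurlyeq (1+\epsilon)\mPi$ implies $\lambdamax(\sum_i \mY_i) \geq (1+\epsilon)\mu_{\sf max}$, and symmetrically $\sum_i \mY_i \preccurlyeq (1-\epsilon)\mPi$ implies $\lambdamin(\sum_i \mY_i) \leq (1-\epsilon)\mu_{\sf min}$, it suffices to bound the scalar-valued probabilities
\[
\Pr\s{\lambdamax\p{\sum_i \mY_i} \geq (1+\epsilon)\mu_{\sf max}}, \qquad \Pr\s{\lambdamin\p{\sum_i \mY_i} \leq (1-\epsilon)\mu_{\sf min}},
\]
where all operators are restricted to the $d$-dimensional subspace $U$ on which the $\mY_i$ are supported, so that union bounds produce a prefactor of $d$ rather than $n$.

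The workhorse is the matrix Laplace transform: for any $\theta > 0$,
\[
\Pr\s{\lambdamax\p{\sum_i \mY_i} \geq t} \;\leq\; e^{-\theta t}\,\E\trace \exp\p{\theta \sum_i \mY_i}.
\]
Tropp's master inequality (a consequence of Lieb's concavity theorem) controls the right-hand side by $\trace \exp\p{\sum_i \log \E e^{\theta \mY_i}}$. Using the hypothesis $\vzero \preccurlyeq \mY_i \preccurlyeq R\mId$, the scalar chord inequality $e^{\theta x} \leq 1 + \tfrac{e^{\theta R}-1}{R} x$ on $[0,R]$ lifts by operator monotonicity of $x \mapsto e^{\theta x}$ on this interval to the PSD bound $\E e^{\theta \mY_i} \preccurlyeq \mId + g(\theta)\,\E\mY_i$ with $g(\theta) := (e^{\theta R}-1)/R$. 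Operator concavity of the logarithm then gives $\log \E e^{\theta \mY_i} \preccurlyeq g(\theta)\,\E\mY_i$; summing over $i$ yields $\sum_i \log \E e^{\theta \mY_i} \preccurlyeq g(\theta)\,\mPi$, whose largest eigenvalue is at most $g(\theta)\,\mu_{\sf max}$. Substituting, bounding the trace exponential by $d \cdot e^{g(\theta)\mu_{\sf max}}$, and optimizing $\theta$ produces the Bennett-type tail $d\bigl[e^\epsilon/(1+\epsilon)^{1+\epsilon}\bigr]^{\mu_{\sf max}/R}$, which simplifies to the claimed form via $(1+\epsilon)\log(1+\epsilon) - \epsilon \geq \epsilon^2/3$ for $\epsilon \in (0,1)$.

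The lower tail is entirely symmetric: take $\theta < 0$ in the Laplace transform, apply the same Lieb-based subadditivity to bound $\E\trace\exp(-\theta \sum_i \mY_i)$, and invoke the slightly sharper scalar inequality $(1-\epsilon)\log(1-\epsilon) + \epsilon \geq \epsilon^2/2$, which explains the improvement of the denominator from $3$ to $2$. The one step that requires some care is the reduction to the subspace $U$: one must verify that the matrix exponential, trace, and eigenvalue operations all commute with this restriction so the prefactor $d$ (rather than $n$) is legitimate. Beyond that, every remaining step is a routine algebraic calculation or a direct invocation of the Tropp--Lieb template, so I expect no substantive obstacle; the lemma is essentially a repackaging of the standard matrix Chernoff bound.
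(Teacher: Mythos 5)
Your proof is correct and matches the paper's approach: the paper simply invokes Tropp's Corollary~5.2 (calling it ``lightly modified'') without reproducing the derivation, and you spell out the standard matrix Laplace-transform / Lieb argument behind it, correctly identifying the ``modification'' as restricting all operators to the fixed $d$-dimensional invariant subspace $U$ so that the dimensional prefactor is $d$ rather than $n$. One minor terminological slip: the step passing from $\E e^{\theta \mY_i}\preceq \mId + g(\theta)\,\E\mY_i$ to $\log \E e^{\theta \mY_i}\preceq g(\theta)\,\E\mY_i$ rests on operator \emph{monotonicity} of $\log$ together with the scalar bound $\log(1+x)\le x$, not on operator concavity.
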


By Lemma \ref{lem:psd_chernoff}, the union bound, and the fact that $\mu_{\sf max} \geq \mu_{\sf min},$ we obtain, with probability at least $1-\delta$,
\[
(1-\epsilon)\,\mL_{\vsbar} \;\preccurlyeq\; \mL_{\tilde{\vs}} \;\preccurlyeq\; (1+\epsilon)\,\mL_{\vsbar}, \qquad \mathsf{on} \; \Span(\vone)^\perp,
\]
provided that
\[
\epsilon \geq \sqrt{\frac{3R\log(d/\delta)}{\mu_{\sf min}}} = C \sqrt{\frac{2 w_{\sf max} \log((n-1)/\delta)}{\lambda_2(\mL_{\vsbar})}}, \qquad C := \sqrt{3}.
\]
This is the advertised bound \eqref{eq:bern_spectral} with constant $C = \sqrt{3}.$ A standard scalar Bernoulli concentration argument completes the proof.
\end{proof}

\subsection{Proof of Theorem \ref{thm:fw_gap_convergence}}
\ifapdxthms
\begin{theorem}
    \label{thm:fw_gap_convergence}
    Let $\setS \subseteq \s{0,1}^m$ be a convex set of almost surely connected switching distributions, and let $\vs_t \in \setS$ be the $t$-th Frank-Wolfe iterate of the switching probabilities in the Frank-Wolfe algorithm. Set
    \begin{equation}
    \label{eq:lmo_vector}
    \vv_t^\star \in \argmin_{\vv \in S} \ \ip{\nabla\phi(\vs_t)}{\vv},
    \end{equation}
    Then, if
    \[
    \ip{\nabla \phi(\vs_t)}{\vs_t - \vv_t^\star} \leq \tau \cdot \phi(\vs_t),
    \]
    then
    \[
    \phi(\vs_t) - \phi(\vs_\star) \leq \frac{\tau}{1-\tau} \phi(\vs_\star)\; \iff \; \phi(\vs_t) \leq \frac{1}{1-\tau}\phi(\vs_\star),
    \]
    where $\vs_\star$ is a global minimizer of the convex function $\phi(\cdot)$ over $S$.
\end{theorem}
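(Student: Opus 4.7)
The plan is to combine two standard facts about Frank–Wolfe on convex objectives: the gradient inequality from convexity of $\phi$ (established in Lemma~\ref{lemma:homogeneity}), and the defining property of the linear minimization oracle $\vv_t^\star$. The convexity of $\phi$ gives the familiar lower bound
\[
\phi(\vs_\star) \;\ge\; \phi(\vs_t) + \ip{\nabla\phi(\vs_t)}{\vs_\star - \vs_t},
\]
which rearranges to $\phi(\vs_t) - \phi(\vs_\star) \le \ip{\nabla\phi(\vs_t)}{\vs_t - \vs_\star}$.

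Next, since $\vs_\star \in S$ and $\vv_t^\star$ is a minimizer of $\vv \mapsto \ip{\nabla\phi(\vs_t)}{\vv}$ over $S$, one has $\ip{\nabla\phi(\vs_t)}{\vv_t^\star} \le \ip{\nabla\phi(\vs_t)}{\vs_\star}$, so
\[
\ip{\nabla\phi(\vs_t)}{\vs_t - \vs_\star} \;\le\; \ip{\nabla\phi(\vs_t)}{\vs_t - \vv_t^\star}.
\]
Chaining these two inequalities with the hypothesis $\ip{\nabla\phi(\vs_t)}{\vs_t - \vv_t^\star} \le \tau\,\phi(\vs_t)$ yields
\[
\phi(\vs_t) - \phi(\vs_\star) \;\le\; \tau\,\phi(\vs_t).
\]
Solving for $\phi(\vs_t)$ gives $(1-\tau)\phi(\vs_t) \le \phi(\vs_\star)$, hence $\phi(\vs_t) \le \phi(\vs_\star)/(1-\tau)$, and subtracting $\phi(\vs_\star)$ from both sides yields the relative suboptimality bound $\phi(\vs_t) - \phi(\vs_\star) \le \tfrac{\tau}{1-\tau}\phi(\vs_\star)$. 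This shows the two advertised conclusions are equivalent and both follow from the same chain.

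I do not foresee a serious obstacle here: the argument is the textbook Frank–Wolfe gap bound, but phrased multiplicatively (relative to $\phi(\vs_t)$) instead of additively (relative to a diameter constant). The only items worth a sentence of justification in the write-up are (i) that $\vs_\star \in S$ is feasible for the linear minimization oracle, which is immediate from $\vs_\star$ being the constrained minimizer over $S$, and (ii) that $1-\tau > 0$, needed to divide; this is a mild implicit assumption that is automatic in the downstream corollary, where $\tau = \alpha/(1+\alpha) < 1$. No use of homogeneity, generalized self-concordance, or the specific Laplacian structure is required, so the result is genuinely a statement about convex optimization via conditional gradients; the ``multiplicative gap'' condition is the only non-standard ingredient and it is used as given.
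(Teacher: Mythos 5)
Your proof is correct and is essentially identical to the paper's: both arguments use the convexity gradient inequality $\phi(\vs_\star) \ge \phi(\vs_t) + \ip{\nabla\phi(\vs_t)}{\vs_\star - \vs_t}$, then bound $\ip{\nabla\phi(\vs_t)}{\vs_t - \vs_\star}$ by the Frank--Wolfe gap $\ip{\nabla\phi(\vs_t)}{\vs_t - \vv_t^\star}$, and finally apply the hypothesis and rearrange. Your added remark that $1 - \tau > 0$ is needed to divide is a useful explicit note that the paper leaves implicit.
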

\fi
\begin{proof}[Proof of Theorem \ref{thm:fw_gap_convergence}]
    For the $t$-th iterate $\vs_t \in \setS$, define the \textit{Frank-Wolfe gap} $g : \setS \to \R$ as
    \begin{align*}
    g(\vs_t) := \max_{\vv \in \setS} \ip{\nabla \phi(\vs_t)}{\vs_t - \vv}
    = \ip{\nabla \phi(\vs_t)}{\vs_t} - \min_{\vv \in \setS} \ip{\nabla \phi(\vs_t)}{\vv}
    % &= \phi(\vs) - \ip{\nabla \phi(\vs_t)}{\vv_t^\star}\\
    = \ip{\nabla \phi(\vs_t)}{\vs_t - \vv_t^\star},
    \end{align*}
    where $\vv_t^\star$ is as in \eqref{eq:lmo_vector}. By convexity,
    $
    \phi(\vs_\star) \geq \phi(\vs_t) + \ip{\nabla\phi(\vs_t)}{\vs_\star - \vs_t},
    $
    hence
    \[
    \phi(\vs_t) - \phi(\vs_\star) \leq -\ip{\nabla \phi(\vs_t)}{\vs_\star - \vs_t} \leq \max_{\vv \in \setS}\ip{\nabla\phi(\vs_t)}{\vs_t - \vv} = g(\vs_t) \leq \tau\phi(\vs_t).
    \]
    Rearranging gives $(1-\tau)\phi(\vs_t) \leq \phi(\vs_\star).$
\end{proof}

\subsection{Proof of Corollary \ref{thm:alpha_certificate}}
\begin{proof}[Proof of Corollary \ref{thm:alpha_certificate}]
    It suffices to show that the LMO step for the set $S_q(\vs_0)$ is as given in \eqref{eq:specific_lmo_entrywise}, and then invoke Theorem \ref{thm:fw_gap_convergence}. We now establish this.
    
    First, note that we trivially must have~$\vv^\star(\vs)_e =1$ for all~$e \in \mathsf{supp}(\vs_0)$ by construction. Further, by Lemma~\ref{lem:switching_gradient}, for any iteration $t$ we have that the congestion gradient entries are non-positive, as
    \[
    \forall e \in E, \quad \p{\nabla \phi(\vs_t)}_e = -w_e \abs{\ip{\va_e}{\mL_{\vs_t}^\dagger\vd}}^2 \leq 0 \qquad \forall \vs_t \in S_q(\vs_0), \ \vd \perp \vone;
    \]
    hence, it suffices to minimize the linear form~$\ip{\nabla(\vs_t)}{\vv}$ over~$\c{\vv\::\: \norm{\vv \circ \p{\vone - \vs_0}}_1 \leq q-|T|}$ by selecting the enties with the largest magnitude (most negative values) up to the remaining budget $q-|T|.$ The certificate then immediately follows from Theorem \ref{thm:fw_gap_convergence} with $\tau = \alpha/(1+\alpha).$
\end{proof}

\subsection{End-to-end guarantee \& runtime complexity in $\ell_{2\to2}$ space}
\label{apdx:end-to-end-FW-l2}
% \begin{theorem}[End-to-end approximation guarantee with runtime complexity]
% \label{thm:FW-rounded-output-convergence}
%     Assume that $\norm{\vd}_2 \leq 1$ and the fixed backbone $T \subseteq E$ satisfies $\lambda_2(\mL_T) \geq 1.$ 
%     Suppose that $p_{\sf min} = 0$. For any $\delta \in (0,1)$, Algorithm \textsc{RandReconfig} returns a random integral solution $\vstilde \in \c{0,1}^m$ that satisfies, with probability at least $1-\delta$,
%     \[
%     \phi(\vstilde) \leq \frac{1}{1-\epsilon}\phi(\vs_t) \leq \frac{1+\alpha}{1-\epsilon} \phi(\vs_\star), \quad  \mathsf{with} \quad \epsilon \lesssim \sqrt{\log(n/\delta)}
%     \]
%     in total running time $\tilde{O}(mq/\alpha),$ where $\tilde{O}(\cdot)$ suppresses polylogarithmic factors in $n$ and the Laplacian solver tolerance.
%     % where $\vstilde$ is generated with independent Bernoulli rounding using where $\vs_\star$ is a minimizer of the relaxed problem, where
%     % the total number of Frank-Wolfe iterations are
%     % \[
%     % t = \setO(\ldots),
%     % \]
%     % thus, the total running time of the algorithm is $\setO(\ldots)$
% \end{theorem}
\begin{proof}[Proof of Theorem \ref{thm:FW-rounded-output-convergence}]
    Run the monotone Frank-Wolfe method with step sizes $\eta_t = 2/(t+2)$, and for $t =0,1,\ldots,$ let $\vs_t \in \s{0,1}$ be the incumbent fractional solution. 
    By standard convergence arguments, we have
    \[
    t \geq \ceil*{\frac{2LD^2}{\alpha}} \implies \phi(\vs_t) \leq \p{1+\alpha}\phi(\vs_\star)
    \]
    for any minimizer $\vs_\star \in \argmin_{\vs \in S_q(\vs_0)} \phi(\vs).$ 
    
    To see this, note that by assumption and by Lemma \ref{lemma:hessian_operator_norm_bound}, we have that $L \leq 2$. Moreover, the diameter of the feasible region $S_q(s_0)$ is
     \[
     D := \sup_{y,z \in S_q(s_0)} \, \norm{y-z}_2 = \sup_{y,z \in S_q(s_0)}\sqrt{\sum_{e \in E\setminus S_0} (y_e - z_e)^2} = \sqrt{2(q-n+1)} \leq \sqrt{q}.
     \]
    Thus, standard Frank-Wolfe gap guarantees imply that
    \[
    g(\vs_t) \leq \frac{2LD^2}{t+2} \leq \alpha,
    \]
    and Corollary~\ref{thm:alpha_certificate}/Theorem~\ref{thm:fw_gap_convergence} gives
    \[
    \phi(\vs_t) \leq \p{1+\alpha}\phi(\vs_\star).
    \]

    Now, we want to apply Theorem \ref{thm:bern_rounding_main}. We first must show that $\norm{\vs_t}_1 \leq q$ for all $t$ by induction. The base case is true since $\vs_0$ is a feasible point. Now, for the induction step, suppose that $\sum_e s_e^{(t)} \leq q$. Then, at iteration $t+1$, we have
    \begin{align*}
    \sum_e s_e^{(t+1)} &= \sum_e \p{(1-\gamma_t) s_e^{(t)} + \gamma_t v_{e}^\star(s^{(t)})} \\
    &= (1-\gamma_t) \sum_e s_e^{(t)} + \gamma_t \sum_e v_{e}^\star(s^{(t)}) \\
    &\leq (1-\gamma_t) q + \gamma_t q \\
    &= q.
    \end{align*}
    Thus, by induction, we have that $\sum_e s_e^{(t)} \leq q$ for all $t$.

    By Theorem \ref{thm:bern_rounding_main} and the stated assumptions, we have
    \[
    \epsilon \leq \sqrt{\frac{3R\log\p{(n-1)/\delta}}{\lambda_2(\mL_{\vs_t})}} \leq \sqrt{3R\log((n-1)/\delta)} \lesssim \sqrt{\log(n)/\delta},
    \]
    where the second inequality is by assumption that $\lambda_2(\mL_{\vs_0}) \geq 1$, so $\lambda_2(\mL_{\vs_t}) \geq 1$ for all $t\geq1$, also.

    Finally, note that each FW step costs one Laplacian solve + one $O(m)$ pass over the edges + top-$q$ selection, for $t = \Theta(q/\alpha)$ iterations. Thus, the total running time is $\tilde{O}(mq/\alpha).$ This completes the proof.
\end{proof}

\section{Additional Analyses}
\label{sec:complexity}
In this section we present further analyses that we were unable to fit in the main submission.

\subsection{Satisfying the budget with high probability}
\label{apdx:satisfying_the_budget_high_prob}
Here, we provide rigorous modifications of the Bernoulli parameters to ensure that the constraint $\norm{\vstilde}_1 \leq q$ is satisfied with high probability after rounding.
\begin{proposition}
    \label{prop:bern_constraint}
    Let~$\vs_0 \in \c{0,1}^m$ be a connected template and let~$\vs \in S_q(\vs_0)$ be any feasible switching distribution. Fix a failure probability~$\delta \in (0,1)$ and~$0<\gamma\leq q-\abs{T}.$ Define the shrinkage parameter
    \[
    \theta := \begin{cases}
        1 &\mathsf{if} \ \sum_{e \in E \setminus T} s_e \leq q - \abs{T}-\gamma\\
        \frac{q-\abs{T}-\gamma}{\sum_{e \notin T}s_e} &\ow,
    \end{cases}
    \]
    and set~$s_e' =1$ for~$e \in T$ and~$s_e' = \theta s_e$ otherwise.  Then, if~$\gamma \geq \sqrt{2\p{q-\abs{T}}\log(1/\delta)},$ and we sample a rounded vector~$\vstilde \in \c{0,1}^m$ as a vector Bernoulli random variables with~$\p{\vstilde}_e \sim \mathsf{Ber}(s_e'),$ we have that
    $
    \Pr\p{\norm{\vstilde}_1 \leq q} \geq 1-\delta.
    $
\end{proposition}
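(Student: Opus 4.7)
\medskip

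\noindent\textbf{Proof plan for Proposition \ref{prop:bern_constraint}.} The plan is to reduce the question to a one-dimensional concentration inequality for a sum of independent Bernoulli trials, using the construction of $\theta$ to control the mean and Bernstein's inequality to control the tail.

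First, I would decompose
\[
\norm{\vstilde}_1 = \sum_{e \in T} \tilde{s}_e + \sum_{e \in E \setminus T} \tilde{s}_e = |T| + X, \qquad X := \sum_{e \in E \setminus T} \tilde{s}_e,
\]
using that $s_e' = 1$ deterministically for $e \in T$. Then the event $\{\norm{\vstilde}_1 > q\}$ is exactly $\{X > q - |T|\}$, and it suffices to prove $\Pr(X > q - |T|) \leq \delta$. Next I would use the two cases in the definition of $\theta$ to show a uniform mean bound: in the first case $\theta = 1$ and $\Expec[]{X} = \sum_{e \notin T} s_e \leq q - |T| - \gamma$ by hypothesis; in the second case $\theta \sum_{e \notin T} s_e = q - |T| - \gamma$ by construction, so $\Expec[]{X} = q - |T| - \gamma$. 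Either way $\Expec[]{X} \leq q - |T| - \gamma$.

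For the concentration step, $X$ is a sum of independent Bernoulli random variables $\tilde{s}_e \in \{0,1\}$ with parameters $s_e' \in [0,1]$, so the individual variances satisfy $\var(\tilde{s}_e) = s_e'(1-s_e') \leq s_e'$, and the total variance $\sigma^2 := \sum_{e \notin T} s_e'(1-s_e')$ is bounded by $\Expec[]{X}$. Applying Bernstein's inequality to the centered sum gives
\[
\Pr\p{X - \Expec[]{X} \geq \gamma} \;\leq\; \exp\p{-\frac{\gamma^2/2}{\sigma^2 + \gamma/3}}.
\]
Since $\sigma^2 + \gamma/3 \leq \Expec[]{X} + \gamma/3 \leq (q - |T| - \gamma) + \gamma/3 \leq q - |T|$, the right-hand side is at most $\exp\p{-\gamma^2/\p{2(q-|T|)}}$. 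Finally, since $\Pr(X > q-|T|) \leq \Pr(X - \Expec[]{X} \geq \gamma)$ by the mean bound, the hypothesis $\gamma \geq \sqrt{2(q-|T|)\log(1/\delta)}$ yields $\Pr(\norm{\vstilde}_1 > q) \leq \delta$, as claimed.

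The only subtle point is picking the right concentration inequality: a naive Hoeffding bound would give a tail of order $\exp(-\gamma^2/(m-|T|))$, forcing $\gamma$ to scale with the ambient dimension $m$ rather than the budget $q$. Bernstein (or equivalently the multiplicative Chernoff bound for Bernoullis) is what lets us replace $m - |T|$ by the sum of variances and hence by $q - |T|$, which is exactly the scaling stated in the proposition. Nothing else in the argument is delicate, and both branches of the definition of $\theta$ are handled uniformly once the mean bound is established.
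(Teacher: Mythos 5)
Your proof follows the same approach as the paper's: decompose $\norm{\vstilde}_1 = |T| + X$, show $\E[X] \leq q - |T| - \gamma$ from the two branches of the $\theta$ definition, and apply Bernstein's inequality with the bound $\sigma^2 + \gamma/3 \leq q - |T|$ to reduce to $\exp(-\gamma^2/(2(q-|T|)))$. If anything your writeup is slightly cleaner than the paper's, because you make explicit that the Bernoulli variance sum $\sigma^2 = \sum_{e\notin T} s_e'(1-s_e')$ is dominated by the mean $\E[X]$ (the paper silently substitutes $\mu$ for the variance proxy in the Bernstein tail), and you state the inclusion $\{X > q-|T|\}\subseteq\{X-\E[X]\geq\gamma\}$ rather than passing through the intermediate event $\{Y\geq|T|+\mu+\gamma\}$.
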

\begin{proof}
    Let~$Y:=\sum_e \tilde{s}_e = \sum_{e \in T} \tilde{s}_e + \sum_{e \in E\setminus T} \tilde{s}_e.$ For~$e \in T,$ $s_e'=1$, so $\sum_{e \in T} \tilde{s}_e=\abs{T}.$ Moreover, for~$e \notin T$, $\E \tilde{s}_e=s_e'$, and by construction~$\mu:=\Expec[]{\sum_{e\notin T}\tilde{s}_e}=\sum_{e\notin T} s_e' \leq q - \abs{T}-\gamma.$ Apply the scalar Bernstein inequality to~$S := \sum_{e\notin T} \p{\tilde{s}_e - \E \tilde{s}_e},$ and, for any $t>0$, we have 
    \[
    \Pr\p{S\geq t} \leq \exp\p{\frac{-t^2}{2\p{\mu+t/3}}}.
    \]
    With $t=\gamma,$ we get 
    \[
    \Pr\p{Y \geq \abs{T}+\mu+\gamma} \leq \exp\p{-\frac{\gamma^2}{2\p{\mu+\gamma/3}}}\leq\exp\p{-\frac{3\gamma^2}{8\p{q-\abs{T}}}},
    \]
    where the final inequality is due to the fact that~$\mu\leq q-\abs{T}-\gamma.$ Choose~$\gamma\geq\sqrt{2\p{q-\abs{T}}\log(1/\delta)}\implies \mu+\gamma/3\leq q-\abs{T}$ to get the bound~$\Pr\p{Y\geq\abs{T} + \mu + \gamma} \leq \exp\p{-\frac{\gamma^2}{2\p{q-\abs{T}}}}.$ That is, choosing $\gamma \geq \sqrt{2\p{q-\abs{T}}\log(1/\delta)}$ yields $\Pr\p{Y\geq q} \leq \delta.$
\end{proof}

    \subsection{Alternative complexity analysis in $\ell_{1\to\infty}$ space}
    \label{sec:linf-l1-space}
    The runtime analysis in Appendix~\ref{apdx:end-to-end-FW-l2} yields the optimal asymptotic rate; we can achieve the same asymptotic complexity in a different geometry, which may have certain advantages. For a fixed budget~$q \in \N_+,$ define the norm
    \[
    \norm{\cdot}_{\hexagon} := \max\c{\norm{\cdot}_1,\, q\norm{\cdot}_\infty};
    \]
    with corresponding norm ball  $\setB_{\hexagon} \subseteq \R^m$ as 
    \[
    \setB_{\hexagon} := \c{\vu \in \R^m\;:\; \norm{\vu}_{\hexagon} \le 1} = \c{\vu \in \R^m \;:\; \max\c{\norm{\vu}_1,\; q\norm{\vu}_\infty} \leq 1}.
    \]
    This family of norms describes a polytopal geometry~\cite{deza_polytopal_2021}; a subset of the unit hypercube with cut corners, which is related to Eulerian numbers. 
    \begin{proposition}
        Let~$\vu \in \R^m$, and let~$\abs{u}_{[1]}\geq\abs{u}_{[2]}\geq\dots \geq \abs{u}_{[m]}$ denote the magnitude of the entries of $\vu$ sorted in non-increasing order. For a fixed $q \in \N_+$, the dual norm of $\norm{\cdot}_{\hexagon}$ is given as
        \[
        \norm{\vu}_{\hexagon,\star} = \sup_{\norm{\vy}_1\leq 1,\; \norm{\vy}_\infty \leq 1/q}\; \ip{\vu}{\vy} = \frac{1}{q} \sum_{i \in q} \abs{u}_{[i]}, 
        \]
        i.e., the average of the $q$ largest absolute coordinates.
    \end{proposition}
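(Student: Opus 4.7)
The plan is to reduce the supremum to a small linear program over a polytope and evaluate it via LP duality. First I would observe that $\{\vy : \|\vy\|_1 \leq 1,\ \|\vy\|_\infty \leq 1/q\}$ coincides with the unit ball of $\|\cdot\|_{\hexagon}$, because $\max\{a,b\} \leq 1$ is equivalent to $a \leq 1$ and $b \leq 1$; so the displayed supremum is indeed the dual norm. Second, because both defining inequalities are invariant under sign flips and coordinate permutations, I can replace $y_e$ by $\sgn(u_e)|y_e|$ and relabel coordinates. This reduces the problem without loss of generality to maximizing $\langle \vu, \vy\rangle$ over $\vy \geq \vzero$ in the same polytope, under the normalization $u_1 \geq u_2 \geq \cdots \geq u_m \geq 0$, so that $u_i = |u|_{[i]}$.

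For the lower bound I would exhibit the candidate $y_i^\star := 1/q$ for $i \leq q$ and $y_i^\star := 0$ otherwise. It satisfies $\|\vy^\star\|_1 = 1$ and $\|\vy^\star\|_\infty = 1/q$, so it is feasible, and it achieves $\langle \vu, \vy^\star\rangle = \frac{1}{q}\sum_{i=1}^q |u|_{[i]}$.

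For the matching upper bound, the cleanest route is LP duality. The primal LP is $\max \langle \vu, \vy\rangle$ over $\sum_i y_i \leq 1$ and $0 \leq y_i \leq 1/q$, and its dual reads
\[
\min_{\lambda,\vmu \geq \vzero}\; \lambda + \frac{1}{q}\sum_{i=1}^m \mu_i \quad \st \quad \lambda + \mu_i \geq u_i \quad \forall i.
\]
I would then exhibit the dual-feasible point $\lambda = u_q$, $\mu_i = (u_i - u_q)_+$; the constraints $\lambda + \mu_i \geq u_i$ are satisfied by the sorting (equality for $i \leq q$, and $\lambda \geq u_i$ for $i > q$), and the dual objective telescopes to $u_q + \frac{1}{q}\sum_{i=1}^q (u_i - u_q) = \frac{1}{q}\sum_{i=1}^q u_i$, matching the primal value from the previous paragraph. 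Weak duality closes the bound.

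I do not anticipate a real obstacle: once the two reductions in the first paragraph are in place, the rest is a one-line primal-dual certificate. A purely combinatorial alternative would characterize the extreme points of the feasible polytope as vectors with entries in $\{0, 1/q\}$ supported on at most $q$ coordinates, which immediately forces the linear maximum onto the $q$ largest-magnitude entries of $\vu$; this avoids duality at the cost of a slightly more detailed vertex analysis.
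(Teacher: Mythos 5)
Your proposal is correct and follows the same overall route as the paper: rewrite the dual norm as a supremum over the polytope $\{\vy : \|\vy\|_1\le 1,\ \|\vy\|_\infty\le 1/q\}$, absorb signs to reduce to a nonnegative LP, and identify the optimum as the fractional-knapsack solution $y_i=1/q$ on the $q$ largest coordinates. The paper simply asserts this optimum ("the optimal solution is clearly ..."); your added LP-duality certificate $(\lambda,\vmu)=\bigl(u_q,(u_i-u_q)_+\bigr)$ is a small but genuine improvement in rigor, as it closes the upper bound without appeal to folklore about knapsacks, and it also makes explicit the sign-and-permutation reduction that the paper leaves implicit.
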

    \begin{proof}
        By definition, for any $\vu \in \R^m$ the dual norm of $\norm{\cdot}_{\hexagon}$ is
        \[
        \norm{\vu}_\star := \sup_{\vy : \norm{\vy}_{\hexagon} \leq 1}\; \ip{\vy}{\vu}.
        \]
        This is equivalent to the constrained optimization problem
        \[
        \norm{\vu}_{\hexagon,\star} = \sup_{\vy \in \R^m} \; \ip{\vy}{\vu} \quad \st \quad \sum_{k=1}^m y_k \leq 1, \quad 0 \leq y_k \leq 1/q,\ k=1,\ldots,m.
        \]
        This is knapsack problem with identical bounds; the optimal solution is clearly~$y_i = 1/q$ for indices~$i=1,\ldots,q$ corresponding to $\abs{u}_{[1]}\geq \abs{u}_{[2]}\geq\ldots\geq\abs{u}_{[q]}.$ 
    \end{proof}

    Now, we relate our choice of norm to $\norm{\cdot}_2$.
    \begin{lemma}
    \label{lemma:l2-hexagon-relation}
        We have 
        \[
        \norm{\vu}_{\hexagon,\star} \leq \frac{1}{\sqrt{q}}\norm{\vu}_2.
        \]
    \end{lemma}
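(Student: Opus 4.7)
The plan is to apply the Cauchy--Schwarz inequality directly to the characterization of $\|\vu\|_{\hexagon,\star}$ derived in the preceding proposition. Recall that
\[
\|\vu\|_{\hexagon,\star} \;=\; \frac{1}{q}\sum_{i=1}^{q}|u|_{[i]},
\]
so the claim reduces to bounding the sum of the $q$ largest absolute coordinates of $\vu$ by $\sqrt{q}\,\|\vu\|_2$.

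First, I would apply Cauchy--Schwarz to the vector $(|u|_{[1]},\ldots,|u|_{[q]})$ paired with the all-ones vector of length $q$, obtaining
\[
\sum_{i=1}^{q}|u|_{[i]} \;\le\; \sqrt{q}\,\sqrt{\sum_{i=1}^{q}|u|_{[i]}^{2}}.
\]
Second, I would observe that the truncated $\ell_2$ norm of the top-$q$ entries is trivially bounded above by the full $\ell_2$ norm, since $\sum_{i=1}^{q}|u|_{[i]}^{2}\le\sum_{i=1}^{m}u_i^{2}=\|\vu\|_2^{2}$. Combining these two steps and dividing through by $q$ yields
\[
\|\vu\|_{\hexagon,\star} \;=\; \frac{1}{q}\sum_{i=1}^{q}|u|_{[i]} \;\le\; \frac{1}{\sqrt{q}}\,\|\vu\|_2,
\]
which is the desired inequality.

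There is no genuine obstacle here; the statement is a one-line consequence of Cauchy--Schwarz combined with the explicit formula for the dual norm. The only thing to be slightly careful about is invoking the formula for $\|\vu\|_{\hexagon,\star}$ correctly (i.e., that the optimum of the underlying LP is attained by placing mass $1/q$ on the top-$q$ coordinates), but this has already been established.
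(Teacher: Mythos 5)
Your proof is correct and takes essentially the same approach as the paper: both reduce to bounding $\sum_{i=1}^q |u|_{[i]}$ by $\sqrt{q}\,\|\vu\|_2$ via Cauchy--Schwarz against the indicator of the top-$q$ coordinates. Your write-up is actually slightly cleaner, since the paper attributes the key step to ``norm equivalence $\|\vu\|_1 \le \sqrt{m}\,\|\vu\|_2$,'' whereas you make explicit that the relevant inequality is Cauchy--Schwarz on a $q$-sparse restriction, which is where the $\sqrt{q}$ (rather than $\sqrt{m}$) factor comes from.
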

    \begin{proof}
        By Cauchy-Schwarz,
        \[
        \norm{\vu}_{\hexagon,\star} = \frac{1}{q} \sum_{i=1}^q \abs{u}_{[i]} = \frac{1}{q}\sum_{i=1}^m \abs{u}_{[i]} \cdot \Ind{i \leq q} =\frac{1}{q} \ip{\vone_{i\leq q}}{ \vu \circ\sgn(\vu)} \overset{(1)}{\leq} \frac{1}{\sqrt{q}} \norm{\vu}_2,
        \]
        where in step (1) we applied norm equivalence, namely~$\norm{\vu}_1 \leq \sqrt{m}\norm{\vu}_2$ for~$\vu \in \R^m.$ 
    \end{proof}

    We immediately yield the following result.
    \begin{lemma}
    \label{lemma:the_right_constants}
        Fix an integer $q \in \N_+$, and a connected template $\vs_0$. Define
        \[
        S_q(\vs_0) = \c{\vs \in \s{0,1}^m\; \norm{\vs}_1 \leq q,\; s_e =1 \; \forall e \in \mathsf{supp}(\vs_0)}.
        \]
        Assume that $\phi(\cdot)$ is $L$-smooth with respect to $\norm{\cdot}_2$. Then, for fixed demands $\vd\perp\vone$, the relaxed congestion~$\phi(\vs) = \vd^\T\mL_{\vs}^\dagger\vd$ is~$\beta$-smooth w.r.t. $\norm{\cdot}_{\hexagon}$, i.e.,
        \[
        \norm{\nabla \phi(\vs)-\nabla\phi(\vs')}_{\hexagon,\star} \leq \beta\norm{\vs-\vs'}_{\hexagon} \qquad \forall \vs,\vs' \in S_q(\vs_0),
        \]
        with 
        $
        \beta \leq L/q.
        $
    \end{lemma}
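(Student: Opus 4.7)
The plan is to reduce $\beta$-smoothness in the $\|\cdot\|_{\hexagon}$ geometry to the assumed $L$-smoothness in $\|\cdot\|_2$ by sandwiching both sides of the smoothness inequality with norm comparisons. Concretely, for any $\vs,\vs' \in S_q(\vs_0)$, I will bound
\[
\norm{\nabla\phi(\vs)-\nabla\phi(\vs')}_{\hexagon,\star} \;\leq\; \tfrac{1}{\sqrt{q}}\,\norm{\nabla\phi(\vs)-\nabla\phi(\vs')}_2 \;\leq\; \tfrac{L}{\sqrt{q}}\,\norm{\vs-\vs'}_2 \;\leq\; \tfrac{L}{q}\,\norm{\vs-\vs'}_{\hexagon}.
\]
The first inequality is exactly Lemma~\ref{lemma:l2-hexagon-relation}, applied to the gradient difference. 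The middle inequality is the assumed $L$-smoothness in $\|\cdot\|_2$. So the only new ingredient needed is the primal comparison $\|\vu\|_2 \leq q^{-1/2}\,\|\vu\|_{\hexagon}$ for all $\vu \in \R^m$.

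To establish the primal comparison, I will use the elementary interpolation bound $\|\vu\|_2^2 \leq \|\vu\|_\infty\|\vu\|_1$, which follows from $u_i^2 \leq \|\vu\|_\infty|u_i|$ and summation. By definition of $\|\cdot\|_{\hexagon}$, both $\|\vu\|_1 \leq \|\vu\|_{\hexagon}$ and $q\|\vu\|_\infty \leq \|\vu\|_{\hexagon}$, so
\[
\norm{\vu}_2^2 \;\leq\; \norm{\vu}_\infty\,\norm{\vu}_1 \;\leq\; \tfrac{1}{q}\,\norm{\vu}_{\hexagon}^2,
\]
which gives the claim. Chaining the three bounds yields the advertised constant $\beta \leq L/q$.

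There is no substantive obstacle: the argument is purely a norm-equivalence chase, and the factor $1/q$ arises naturally because passing from $\|\cdot\|_2$ to $\|\cdot\|_{\hexagon}$ gains a factor $q^{-1/2}$ on each side (dual norm on the gradient side via Lemma~\ref{lemma:l2-hexagon-relation}, primal norm on the increment side via the interpolation inequality above). The only thing to double-check is that Lemma~\ref{lemma:l2-hexagon-relation} is applicable to arbitrary vectors in $\R^m$ (it is, since it makes no structural assumption on its input), and that $L$-smoothness is invoked on the straight line segment between $\vs$ and $\vs'$, both of which lie in the convex feasible region $S_q(\vs_0)$.
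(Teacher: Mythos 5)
Your chain of three inequalities mirrors the paper's exactly, and the first two steps (Lemma~\ref{lemma:l2-hexagon-relation} applied to the gradient difference, then $L$-smoothness in $\ell_2$) are identical. Where you diverge is in the final step, the primal comparison $\|\vs - \vs'\|_2 \le q^{-1/2}\|\vs - \vs'\|_{\hexagon}$. The paper gets this by ``applying Lemma~\ref{lemma:l2-hexagon-relation} twice,'' which is a slight abuse: that lemma bounds the \emph{dual} norm $\|\cdot\|_{\hexagon,\star}$ by $q^{-1/2}\|\cdot\|_2$, not $\|\cdot\|_2$ by $q^{-1/2}\|\cdot\|_{\hexagon}$. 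The needed primal inequality does follow by duality (since $\|\cdot\|_{\hexagon,\star} \le c\|\cdot\|_2$ dualizes to $\|\cdot\|_2 \le c\|\cdot\|_{\hexagon}$, using self-duality of $\ell_2$ and biduality of $\|\cdot\|_{\hexagon}$), but the paper leaves that step implicit. You instead prove the primal comparison directly and elementarily via the interpolation inequality $\|\vu\|_2^2 \le \|\vu\|_\infty\|\vu\|_1 \le q^{-1}\|\vu\|_{\hexagon}^2$, which is cleaner and fills in the gap the paper glosses over. Both routes give the same constant $\beta \le L/q$; yours is the more self-contained presentation.
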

    \begin{proof}
        By applying Lemma \ref{lemma:l2-hexagon-relation} twice, we have
        \begin{align*}
            \norm{\nabla \phi(\vs) - \nabla \phi(\vs')}_{\hexagon,\star} \leq \frac{1}{\sqrt{q}} \norm{\nabla \phi(\vs) - \nabla \phi(\vs')}_2 \leq \frac{L}{\sqrt{q}} \norm{\vs - \vs'}_2\leq \frac{L}{q}\norm{\vs-\vs'}_{\hexagon}.
        \end{align*}
        This completes the proof.
    \end{proof}

\end{document}